\documentclass[11pt]{article}
\usepackage{amssymb, amsmath, amsfonts, tikz,  ytableau}
\usepackage{graphicx}
\usepackage{subfigure}
\usepackage{amsthm}
\usepackage[indent,margin=1cm]{caption}                 
\usepackage{float}
\usepackage[a4paper, margin=1in]{geometry}                   
\usepackage[colorlinks,linkcolor=blue,anchorcolor=blue,citecolor=blue]{hyperref}
\usepackage{diagbox}
\usepackage{pgfplots}
\usepackage{mathrsfs}
\usetikzlibrary{arrows}

\usetikzlibrary{arrows.meta}
\usetikzlibrary{positioning, calc, decorations.pathreplacing}

\numberwithin{equation}{section}
\numberwithin{figure}{section}

\hypersetup{colorlinks = true}
\bibliographystyle{plain}
\newtheorem{thm}{Theorem}[section]
\newtheorem{conj}[thm]{Conjecture}

\newtheorem{cor}[thm]{Corollary}

\newtheorem{exam}[thm]{Example}
\newtheorem{prop}[thm]{Proposition}
\newtheorem{prob}[thm]{Problem}
\newtheorem{rem}[thm]{Remark}

\linespread{1.2}

\allowdisplaybreaks

\newcommand{\ktableau}[6]{
	\begin{tikzpicture}[scale=0.5]
		\foreach \x/\y in {#1}	{	\fill[gray!20] (\x,\y) rectangle +(1,1);}
		\foreach \x [count=\i] in {#2} {
			\foreach \y [count=\j] in \x {
				\draw[line width=0.5pt] (\j,-\i) rectangle (\j+1,-\i+1);
				\node at (\j+0.85,-\i+0.15) {\tiny{\y}};
			}
		}
		\foreach \x [count=\i] in {#3} {
			\foreach \y [count=\j] in \x {
				\node at (\j+0.5,-\i+0.5) {\small{\y}};
			}
		}
		\foreach \x/\y in {#4}	{\filldraw (\x,\y) circle (0pt) node[below right] {#5};}
		\foreach \x/\y in {#6}	{\draw[line width=1.5pt] (\x,\y) rectangle (\x+1,\y+1);}
	\end{tikzpicture}
}

\begin{document}
	\begin{center}
		{\bf \Large Newton polytopes of dual $k$-Schur polynomials}
	\end{center}

    \begin{center}
	{\bf Bo Wang$^1$, Candice X.T. Zhang$^2$ and Zhong-Xue Zhang$^3$      \\[6pt]}
	
	{\it Center for Combinatorics, LPMC\\
		Nankai University, Tianjin 300071, P. R. China\\[8pt]
		
		Email: $^1${\tt bowang@nankai.edu.cn},\ \ $^2${ \tt zhang\_xutong@mail.nankai.edu.cn},\ \ $^3${\tt zhzhx@mail.nankai.edu.cn}}
    \end{center}
	\noindent\textbf{Abstract.} 
	Rado's theorem about permutahedra and dominance order on partitions reveals that each Schur polynomial is M-convex, 
	or equivalently, it has a saturated Newton polytope and this polytope is a generalized permutahedron as well. In this paper we show that the support of each dual $k$-Schur polynomial indexed by a $k$-bounded partition coincides with that of the Schur polynomial indexed by the same partition, and hence the two polynomials share the same saturated Newton polytope. 
	The main result is based on our recursive algorithm to generate a semistandard $k$-tableau for a given shape and $k$-weight. As consequences, we obtain the M-convexity of dual $k$-Schur polynomials, affine Stanley symmetric polynomials and cylindric skew Schur polynomials. 
	
	\noindent \emph{AMS Mathematics Subject Classification 2020: }05E05, 52B05
	
	\noindent \emph{Keywords:} Saturated Newton polytope, M-convex, semistandard $k$-tableau, dual $k$-Schur polynomials, affine Stanley symmetric polynomials 
	
	\section{Introduction}
	
	Given a polynomial $f=\sum_{\alpha\in\mathbb{N}^{n}}c_{\alpha}x^{\alpha}\in\mathbb{R}[x_1,\,x_2,\,\ldots,\,x_n]$ with real coefficients, define the support of $f$, denoted ${\rm supp}(f)$, by ${\rm supp}(f)=\{\alpha\in\mathbb{N}^{n}\mid c_{\alpha}\neq0\}$. The {Newton polytope} of $f$, denoted ${\rm Newton}(f)$, is the convex hull of its exponent vectors, namely,
	\[{\rm Newton}(f)={\rm conv}\left(\alpha \mid \alpha\in{\rm supp}(f)\right)\subseteq \mathbb{R}^n.\]
	Newton polytopes have been extensively studied in various areas of mathematics since they provide a visual tool to analyze the structure of polynomials and their associated algebraic varieties. For nice expositions of Newton polytopes, see \cite{Sturm1996,GK2000,CLS2011,KKE2021}.

	Recently, the saturation of Newton polytopes has received considerable attention. 
	Following Monical, Tokcan and Yong \cite{MTY2017}, we say that a polynomial $f$ has {saturated Newton polytope}, or simply say $f$ is SNP, if 
	\[{\rm supp}(f)={\rm Newton}(f)\cap \mathbb{Z}^n .\]		
	Monical, Tokcan and Yong \cite{MTY2017} showed that various polynomials in algebraic combinatorics have saturated Newton polytopes, including Schur polynomials, Stanley symmetric polynomials, Hall-Littlewood polynomials and so on.
	
	Monical, Tokcan and Yong also proposed several conjectures on the SNP property for other polynomials, and some progress on these conjectures has been made since then. 
	Through the dual character of the flagged Weyl module, Fink, M\'esz\'aros, and {St. Dizier} \cite{FMD2018} proved the conjectured SNP property for key polynomials and Schubert polynomials. The conjecture on the SNP property for double Schubert polynomials was completely proved by Castillo, Cid-Ruiz, Mohammadi and Monta\~no \cite{CCMM2023}. 
	Monical, Tokcan, and Yong's conjecture on the SNP property for Grothendieck polynomials
	was proved by Escobar and Yong \cite{EY2017} for Grassmannian permutations, by  M\'esz\'aros and St. Dizier \cite{MSD2020} for permutations of the form $w=1w'$ with $w'$ being dominant on $\{2,3,\ldots,n\}$, and by Castillo, Cid-Ruiz, Mohammadi, and Monta\~no \cite{CCMM2022} for permutations with a zero-one Schubert polynomial. 
	The SNP property for Kronecker products of Schur polynomials was proved by Panova and Zhao \cite{PZ2023} for partitions of length two and three, and the general case is open. 
	Monical, Tokcan, and Yong's conjectures on the SNP property of Demazure atoms and Lascoux atoms remains widely open.  
	
	Motivated by Monical, Tokcan and Yong's work, the SNP property for some polynomials not mentioned in \cite{MTY2017} has also been studied. Based on the SNP property for Schur polynomials, Nguyen, Ngoc, Tuan, and Do Le Hai \cite{NNTD2023} obtained the SNP property for dual Grothendieck polynomials. Fei \cite{FJR2023} proved the SNP property for the $F$-polynomial of any rigid representation. Matherne, Morales, and Selover \cite{MMS2023} proved that the chromatic symmetric polynomials of incomparability graphs of (3+1)-free posets are SNP, though
    there does exist some chromatic symmetric polynomial which is not SNP (see \cite[Example 2.33]{MTY2017}). 

    M-convexity is another interesting property closely related to SNP. Recall that a subset $J\subset \mathbb{N}^n$ is said to be M-convex, if for all $\alpha, \beta \in J$ and any index $i$ satisfying $\alpha_{i}>\beta_{i}$, there is an index $j$ such that $\alpha_{j} <\beta_{j}$ and $\alpha-e_{i}+e_{j}\in J$, where $e_{i}$ is the $i$-th unit vector for any $i$. An immediate consequence of this definition is that an M-convex set must lie on a hyperplane. We say that a polynomial $f$ is M-convex if ${\rm supp}(f)$ is M-convex. Thus an M-convex
	polynomial must be homogeneous.	M-convexity is very essential in discrete convex analysis, which builds a connection between convex analysis and combinatorial mathematics. We refer to \cite{Mur03} for a comprehensive treatment of M-convexity. {It is known that a homogeneous polynomial $f$ is M-convex if and only if $f$ is SNP and ${\rm Newton}(f)$ is a generalized permutahedron \cite[Remark 4.1.1]{StD2020}. In fact, many of the aforementioned polynomials are M-convex, such as 
	chromatic symmetric polynomials of incomparability graphs of (3+1)-free posets \cite{MMS2023}, Schur polynomials, key polynomials, and Schubert polynomials \cite{HMMD2022}. Other progress includes Hafner, M\'esz\'aros, Setiabrata, and St. Dizier's work \cite{HMSD2023} on the M-convexity of homogenized Grothendieck polynomials of vexillary permutations.
	
	We would like to point out that the M-convexity of Schur polynomials plays a very important role in the study of the M-convexity of many other polynomials, such as Stanley symmetric polynomials and Reutenauer's symmetric polynomials.  
	As remarked by Huh, Matherne, M\'esz\'aros and St. Dizier in \cite{HMMD2022}, the M-convexity of any Schur polynomial can be deduced from its SNP property, along with the observation that its Newton polytope is a $\lambda$-permutahedron. 
	Recall that a {usual permutahedron} in $\mathbb{R}^n$ is defined by
	\[\mathcal{P}={\rm conv}(\text{permutations of }(a_1,\ldots,a_{n})\in\mathbb{R}^n) ,\]
	where the coordinates $a_1>\cdots>a_n$. By a $\lambda$-permutahedron for a partition $\lambda=(\lambda_1,\lambda_2,\ldots,\lambda_n)$, denoted $\mathcal{P}_{\lambda}$, we mean the convex hull of $S_n$-orbit of $\lambda$.
	It is known that a $\lambda$-permutahedron is a generalized permutahedron. For more information on {generalized permutahedra} see \cite{POS2009}. As pointed out by Monical, Tokcan and Yong \cite{MTY2017}, the fact that any Schur polynomial is SNP and its Newton polytope is a $\lambda$-permutahedron can be deduced from the following result due to Rado \cite{Rado}. 
	\begin{thm}[\cite{Rado}]\label{thm-Rado-Schur}
		Let $d$ be a positive integer, and $\lambda, \, \mu $ be two partitions of $d$. Then $\mathcal{P}_{\mu}\subseteq \mathcal{P}_{\lambda}$ if and only if $\mu \trianglelefteq \lambda$ (meaning that $\mu$ is less than or equal to $\lambda$ in dominance order).
	\end{thm}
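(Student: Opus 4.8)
The plan is to reduce the polytope containment to a single point–membership question and then dispatch the two resulting implications separately: the easy one by duality (linear functionals), the substantive one by an induction along the dominance order. Regard $\lambda$ and $\mu$ as vectors in $\mathbb{R}^n$ (padding with zeros), and recall that $\mathcal{P}_\lambda={\rm conv}\{\sigma\lambda:\sigma\in S_n\}$ is invariant under permuting coordinates, and similarly for $\mathcal{P}_\mu$. I would first observe that $\mathcal{P}_\mu\subseteq\mathcal{P}_\lambda$ is equivalent to $\mu\in\mathcal{P}_\lambda$: the forward implication is immediate because $\mu$ is one of the generating points of $\mathcal{P}_\mu$, and conversely, if $\mu\in\mathcal{P}_\lambda$ then by $S_n$-invariance every $\sigma\mu$ lies in $\mathcal{P}_\lambda$, so $\mathcal{P}_\mu={\rm conv}\{\sigma\mu:\sigma\in S_n\}\subseteq\mathcal{P}_\lambda$ by convexity. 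Hence it suffices to prove that $\mu\in\mathcal{P}_\lambda$ if and only if $\mu\trianglelefteq\lambda$.

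For the forward direction, fix $k\in\{1,\dots,n\}$ and apply the linear functional $\ell_k(x)=x_1+\cdots+x_k$. Since the maximum of a linear functional over a polytope equals its maximum over the generating set, and $\ell_k(\sigma\lambda)$ is a sum of $k$ of the parts of $\lambda$, we get $\max_{x\in\mathcal{P}_\lambda}\ell_k(x)=\lambda_1+\cdots+\lambda_k$; likewise $\ell_n$ is identically $d$ on the whole orbit. Therefore $\mu\in\mathcal{P}_\lambda$ forces $\mu_1+\cdots+\mu_k\le\lambda_1+\cdots+\lambda_k$ for every $k$ and $\mu_1+\cdots+\mu_n=d$, which is exactly $\mu\trianglelefteq\lambda$. (Here one uses that $\mu$, being a partition, is already weakly decreasing, so these are genuinely the defining inequalities of the dominance order.)

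For the converse I would induct on the nonnegative integer $N(\mu):=\sum_{t=1}^{n-1}\big((\lambda_1+\cdots+\lambda_t)-(\mu_1+\cdots+\mu_t)\big)$, which vanishes exactly when $\mu=\lambda$. If $N(\mu)>0$ then $\mu\ne\lambda$, and a short argument with partial sums produces indices $i<j$ such that $\nu:=\mu+e_i-e_j$ is again a partition with $\mu\trianglelefteq\nu\trianglelefteq\lambda$ (concretely: move one box from the last row of a suitable equal-part block up into the first row of an earlier block where the partial-sum deficit $\lambda_1+\cdots+\lambda_t-(\mu_1+\cdots+\mu_t)$ is strictly positive on the whole intervening range). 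Necessarily $N(\nu)<N(\mu)$, so $\nu\in\mathcal{P}_\lambda$ by the induction hypothesis, and then $\sigma\nu\in\mathcal{P}_\lambda$ for the transposition $\sigma=(i\,j)$ by $S_n$-invariance. Since $\sigma\nu=\nu-(\nu_i-\nu_j)(e_i-e_j)$ with $\nu_i-\nu_j=\mu_i-\mu_j+2\ge 2$, the point $\mu=\nu-(e_i-e_j)$ lies on the segment between $\nu$ and $\sigma\nu$, hence $\mu\in\mathcal{P}_\lambda$, completing the induction.

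The main obstacle is precisely this converse — realizing $\mu\trianglelefteq\lambda$ as a convex combination of permutations of $\lambda$ — since everything else is essentially formal. The inductive route above localizes the difficulty to the combinatorics of the dominance order, namely verifying that a valid single-box move always exists when $\mu\ne\lambda$ (a finite check on partitions using partial sums). Alternatively, one may invoke the classical Hardy--Littlewood--P\'olya theorem that $\mu\trianglelefteq\lambda$ with equal coordinate sums implies $\mu=D\lambda$ for a doubly stochastic matrix $D$, followed by the Birkhoff--von Neumann theorem expressing $D$ as a convex combination of permutation matrices, which yields $\mu\in\mathcal{P}_\lambda$ immediately; this trades the combinatorial check for the two named theorems.
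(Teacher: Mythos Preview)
The paper does not supply its own proof of this theorem: it is quoted as a classical result of Rado and cited accordingly, so there is nothing in the paper to compare your argument against. Your proposal is a correct and standard self-contained proof of Rado's theorem. The reduction from polytope containment to the single membership $\mu\in\mathcal{P}_\lambda$ via $S_n$-invariance is clean, the forward direction via the linear functionals $\ell_k$ is exactly right, and the inductive converse is sound once the existence of the single-box move is granted.

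The only place that deserves a bit more care is the sentence ``a short argument with partial sums produces indices $i<j$ such that $\nu:=\mu+e_i-e_j$ is again a partition with $\mu\trianglelefteq\nu\trianglelefteq\lambda$.'' As you hint, one must choose $i$ and $j$ so that the deficit $\sum_{t\le s}(\lambda_t-\mu_t)$ is strictly positive for every $s$ with $i\le s\le j-1$ \emph{and} so that $\mu_{i-1}>\mu_i$ and $\mu_j>\mu_{j+1}$; these two requirements need to be satisfied simultaneously, and a careless choice can fail one of them. Taking $i$ to be the first index of positive deficit automatically gives $\mu_{i-1}>\mu_i$, but the naive choice of $j$ as the next index of zero deficit need not satisfy $\mu_j>\mu_{j+1}$. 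The clean fix is to take $j$ minimal with $j>i$ and $\mu_j>\mu_{j+1}$, and then to argue (using $\mu_{i+1}=\cdots=\mu_j\le\mu_i<\lambda_i\le\lambda_{i-1}\le\cdots$) that the deficit stays positive on $[i,j-1]$; alternatively, as you note, one can sidestep this entirely via Hardy--Littlewood--P\'olya and Birkhoff--von~Neumann. Either way the argument closes.
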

	
	Along this line of investigation, the present paper is devoted to the study of Newton polytopes and M-convexity of dual $k$-Schur polynomials.
	In this paper, a function is called a polynomial if we restrict it to finite variables, and we omit the	variables for convenience.	
	The dual $k$-Schur functions appear as the dual of $k$-Schur functions with respect to the ordinary scalar product of the symmetric function space $\Lambda$ which requires that Schur functions form an orthonormal basis. It is known that both of them are generalizations of Schur functions. The $k$-Schur functions originate from the study of Macdonald positivity  conjecture \cite{LLM2003}, and play a role in the space $\mathbb{Q}[h_1,\ldots,h_k]$ analogous to the role of Schur functions in $\Lambda$, where $h_i$ denotes the $i$-th complete symmetric function. 
	Lapointe and Morse \cite{LM2008} demonstrated that dual $k$-Schur functions form a basis for  $\Lambda/\left\langle m_{\lambda}:\lambda_{1}>k\right\rangle$, where $m_{\lambda}$ are the monomial symmetric functions. The main result of this paper is that each dual $k$-Schur polynomial
	indexed by a $k$-bounded partition has the same support with the Schur polynomial indexed by the same partition. This implies that each dual $k$-Schur polynomial is M-convex and its Newton polytope is a $\lambda$-permutahedron.

    We further study the Newton polytopes and M-convexity of affine Stanley symmetric polynomials and cylindric skew Schur polynomials, the latter being special cases of the former. Affine Stanley symmetric functions were defined by Lam \cite{LAM2006}, and he also showed that dual $k$-schur functions are actually  affine Stanley symmetric functions indexed by affine Grassmannian permutations, in the same way as that Schur functions correspond to Stanley symmetric functions indexed by Grassmannian permutations. The dual $k$-Schur positivity of affine Stanley symmetric functions was first conjectured by Lam \cite{LAM2006} and then proved in his subsequent work \cite{LAM2008}. Based on Lam's work,  we obtain the M-convexity of affine Stanley symmetric polynomials.
	
	The paper is organized as follows. In Section \ref{sec-preliminaries} we will review some notations and facts on dual $k$-Schur functions. Section \ref{sec-SNP-affine-schur} is devoted to the study of the Newton polytopes and M-convexity of dual $k$-Schur polynomials. 
	In Section \ref{sec-affine-Stanley}, we will prove the M-convexity of affine Stanley symmetric polynomials and cylindric skew Schur polynomials. In Section \ref{sec-future}, we present several problems and conjectures for further research. 
	

\section{Preliminaries}\label{sec-preliminaries}
In this section, we will recall some fundamental results concerning $k+1$-cores, $k$-bounded partitions, and dual $k$-Schur functions, which will be utilized in subsequent sections. For more information, see \cite{LLMSSZ2014} and \cite{LM2005}. Let $k$ and $d$ be positive integers throughout this work without explicit mention. 

\subsection{$k+1$-cores and $k$-bounded partitions}

Both $k+1$-cores and $k$-bounded partitions are special integer partitions. By a partition $\lambda$ of $d$ we mean a sequence $\lambda=(\lambda_{1},\lambda_2, \ldots)$ 
of weakly decreasing non-negative integers satisfying $d=\lambda_{1}+\lambda_2+\cdots$. 
The length of $\lambda$, denoted $\ell(\lambda)$, is defined to be the number of its positive parts. We usually use  
$(\lambda_{1},\ldots,\lambda_{\ell})$ to represent $\lambda$
if its length is $\ell$. 
 Each partition $\lambda$ can be 
 identified with its Young diagram, which consists of boxes arranged in left-justified, with $\lambda_{i}$ boxes in the $i$-th row from bottom to top (following French notation). 
Given two partitions $\lambda$ and $\mu$ with $\mu \subseteq \lambda$ (i.e., $\mu_i\leq \lambda_i$ for all $i$), we define a skew partition $\lambda/\mu$ with its diagram consisting of boxes in $\lambda$ but not in $\mu$. 

A box $(i,j)$ in the $i$-th row and $j$-th column of the diagram is referred to as a cell. 
The hook length of a cell $(i,j)$ in $\lambda$ is defined as the number of cells directly to the right of and above $(i,j)$, counting $(i,j)$ itself once. 
A partition is a $k+1$-core if it does not contain any cell with hook length of $k+1$. The $k+1$-residue of a cell $(i,j)$ is defined as $j-i\mod (k+1)$. 
For any two cells $a=(i_{a},j_{a})$ and $b=(i_{b},j_{b})$ such that $b$ is located to the southeast of $a$, we use $h_{\lambda}(a,b)$ to denote the number of cells $(x,j_{a})$ and $(i_{b},y)$ in $\lambda$, where $i_{b}\leq x\leq i_{a}$ and $j_{a}+1\leq y\leq j_{b}$.
We say $(i,j)\in\lambda$ is a top cell if the cell $(i+1,j)\notin \lambda$.  A removable corner is a cell $(i,j)\in\lambda$ with $(i,j+1),(i+1,j)\notin \lambda$, and an addable corner is a cell $(i,j)\notin \lambda$ with $(i,j-1),(i-1,j)\in \lambda$. 
Note that any removable corner is a top cell.
For instance, in Figure \ref{fig-example-4421},  the cells $c_{1}$, $c_{2}$, $c_{3}$ and $c_{4}$ are top cells, the cells $c_{1}$, $c_{2}$, and $c_{4}$ are also  removable corners and the cell $s$ is an addable corner; the hook length of the cell $a$ is 6 and $h_{\lambda}(a,b)=5$.
\begin{figure}[htbp]
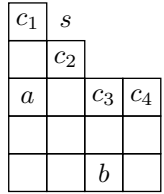

	\vspace{-0.3em}
	\centering
	\ktableau{}{{~},{~,~},{~,~,~,~},{~,~,~,~},{~,~,~,~}}{{$c_{1}$,$s$},{,$c_{2}$},{$a$,,$c_{3}$,$c_{4}$},{},{,,$b$}}{}{}{}
	\vspace{-0.5em}
	\caption{The Young diagram of  $\lambda=(4,4,4,2,1)$.}\label{fig-example-4421}
	\vspace{-1em}
\end{figure}


The following excerpt from \cite[Section 5]{LM2005} presents a fundamental result concerning cells with the same $k+1$-residues in a $k+1$-core.

\begin{prop}[\cite{LM2005}]\label{prop-same-residue}
	Let $c$ and $c^{\prime}$ be two top cells of a $k+1$-core $\gamma$, where $c$ is located weakly southeast to $c^{\prime}$. Then $c$ and $c^{\prime}$ share the same $k+1$-residue if and only if $h_{\gamma}(c^{\prime},c)$ is a multiple of $k+2$. Moreover, if $c$ and  $c^{\prime}$ have the same $k+1$-residue and $h_{\gamma}(c^{\prime},c)>k+2$, then there exists a top cell $c^{\prime\prime}$ of $\lambda$ located to the northwest of $c$ such that $h_{\gamma}(c^{\prime\prime},c)=k+2$.
\end{prop}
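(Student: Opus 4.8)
The plan is to reduce the statement to a one-dimensional fact about the boundary of $\gamma$, in which the $k+1$-core hypothesis becomes a transparent closure condition. The first step is a combinatorial identity valid for \emph{any} partition: if $c=(i_1,j_1)$ and $c'=(i_2,j_2)$ are top cells with $c$ weakly southeast of $c'$, then $h_\gamma(c',c)=(i_2-i_1)+(j_1-j_2)+1$, i.e.\ it equals $(j_1-i_1)-(j_2-i_2)+1$, one more than the difference of the contents $j_1-i_1$ of $c$ and $j_2-i_2$ of $c'$. The reason is that the hook path counted by $h_\gamma(c',c)$ — down column $j_2$ from row $i_2$ to row $i_1$, then along row $i_1$ to column $j_1$ — lies entirely in $\gamma$: its vertical leg does because $c'$ is the top cell of its column and $i_1\le i_2$, and its horizontal leg does because $(i_1,j_1)\in\gamma$ and the leg stays weakly left of column $j_1$; so $h_\gamma(c',c)$ is simply the number of cells on that path. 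Hence $c$ and $c'$ have the same $k+1$-residue if and only if $h_\gamma(c',c)\equiv1\pmod{k+1}$ — and so far the core hypothesis has not been used.

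The second step introduces the $k+1$-core hypothesis via the boundary word of $\gamma$ (equivalently, the abacus of $\gamma$ with $k+1$ runners). Reading the boundary from southeast to northwest, each unit step passes from one content value to the next smaller one; record a $0$ for a horizontal step and a $1$ for a vertical step, indexed by content, so $\gamma$ is encoded by a bi-infinite word $(\varepsilon_d)_{d\in\mathbb{Z}}$ with $\varepsilon_d=0$ for $d\gg0$ and $\varepsilon_d=1$ for $d\ll0$. I would use two dictionary facts: (a) a cell of content $d$ is a top cell of $\gamma$ exactly when $\varepsilon_d=0$ and $d$ lies in the finite range the diagram occupies; and (b) $\gamma$ is a $k+1$-core if and only if $\varepsilon_d=0$ forces $\varepsilon_{d+(k+1)}=0$, because the forbidden pattern ``$\varepsilon_d=1$ and $\varepsilon_{d-(k+1)}=0$'' is exactly a removable rim hook of size $k+1$, i.e.\ a cell of hook length $k+1$. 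Granting (a) and (b), suppose $c$ and $c'$ are top cells of the same residue with contents $d_1>d_2$; applying (b) repeatedly to the $0$ at $d_2$ forces $0$'s at $d_2+(k+1),d_2+2(k+1),\dots,d_1$, each lying in the diagram range and hence each the content of a top cell. This is precisely the structural statement that the top cells of a fixed residue occupy a gapless arithmetic progression of contents with common difference $k+1$. The ``moreover'' clause follows at once, taking for $c''$ the top cell of content $d_1-(k+1)$: it lies strictly northwest of $c$ and satisfies $h_\gamma(c'',c)=(k+1)+1=k+2$. And the main equivalence reduces to the case in which no top cell of the common residue lies strictly between $c$ and $c'$, where $d_1-d_2$ is forced to equal $k+1$ and hence $h_\gamma(c',c)=k+2$; conversely $h_\gamma(c',c)=k+2$ gives $(j_1-i_1)-(j_2-i_2)=k+1$, a multiple of $k+1$, so the residues coincide.

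The step I expect to be the main obstacle is the dictionary of the second paragraph, together with keeping all conventions straight: proving that the closure property of $(\varepsilon_d)$ is literally equivalent to the absence of a cell of hook length $k+1$ (the classical bead-and-gap argument on the abacus), and checking that the $0$'s produced by that closure really do index top cells rather than boundary steps along the part of an axis lying outside the diagram. These points are routine in principle, and the whole argument could instead be phrased through beta-numbers with $k+1$ ``flush'' runners — the skeleton, a content-difference identity together with the core closure property, is the same — but reconciling the French orientation of the Young diagram with the direction in which the abacus fills is where sign errors tend to creep in, so that is where I would be most careful.
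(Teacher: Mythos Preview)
The paper does not supply its own proof of this proposition; it is quoted from Lapointe--Morse \cite{LM2005} as background (see the sentence introducing it), so there is no in-paper argument to compare your sketch against.

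On the substance of your sketch: the content identity $h_\gamma(c',c)=(j_1-i_1)-(j_2-i_2)+1$ and the abacus/closure description of $k{+}1$-cores are the standard route, and they correctly yield the ``Moreover'' clause and the special case $h_\gamma(c',c)=k+2$. The gap is your sentence ``the main equivalence reduces to the case in which no top cell of the common residue lies strictly between $c$ and $c'$.'' By your own formula, two distinct top cells of the same residue at content distance $m(k+1)$ give $h_\gamma(c',c)=m(k+1)+1$, and this is a multiple of $k+2$ only when $m\equiv 1\pmod{k+2}$. Concretely, in the paper's running $6$-core $\gamma=(9,6,5,2,1)$ (Figure~\ref{fig-example-2.2}) the top cells $(5,1)$ and $(1,9)$ share residue $2$, yet $h_\gamma((5,1),(1,9))=13$, which is not a multiple of $k+2=7$. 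So the literal ``multiple of $k+2$'' biconditional cannot be recovered from your reduction, and in fact is false as written; the proposition appears to be slightly misstated relative to the source. What the paper actually \emph{uses} downstream (in the proofs of Propositions~\ref{lem-enough-residue} and~\ref{lem-delete-l}) is only the case $h_\gamma(c',c)=k+2$, its contrapositive for $h_\gamma(c',c)<k+2$, and the ``Moreover'' clause --- all of which your abacus argument does establish.
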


Let $\mathcal{C}^{k+1}$ be the set of all $k+1$-cores. 
Lapointe and Morse \cite{LM2005} established a bijection between $\mathcal{C}^{k+1}$ and a specific class of partitions known as $k$-bounded partitions. Recall that a partiton $\lambda$ is called a $k$-bounded partition if $\lambda_{1}\leq k$. Denote the set of $k$-bounded partitions of $d$ by ${\rm Par}^{k}(d)$, and set ${\rm Par}^{k}=\bigcup_{d\ge 0}{\rm Par}^{k}(d)$ with ${\rm Par}^{k}(0)$ consisting of the empty partition $\emptyset$. 
Lapointe and Morse \cite{LM2005} defined a map $\mathfrak{p}$ from $\mathcal{C}^{k+1}$ to ${\rm Par}^{k}$ by letting 
$$\mathfrak{p}(\gamma)=(\lambda_{1},\lambda_{2},\ldots,\lambda_{\ell}),$$ where $\lambda_{i}$ is the number of cells in the $i$-th row of $\gamma$ with hook lengths not exceeding $k$. 
They showed that $\mathfrak{p}$ is invertible and its inverse map $\mathfrak{c}$ can be constructed as follows: start from the top row $\lambda_{\ell}$ of the $k$-bounded partition $\lambda$ and successively move down a row; for each running row $\lambda_{i}$ if there exists a cell with hook length greater than $k$, then slide this row to the right until we reach the first position where this row has no hook lengths greater than $k$; 
continue this process until all rows have been adjusted, and we finally obtain {a skew diagram of} shape $\gamma/\rho$ (by requiring that if $\gamma/\rho$ and $\mu/\nu$ represent the same diagram then $\gamma\subseteq \mu$); let $\mathfrak{c}(\lambda)=\gamma$. Note that if the leftmost cell of $\lambda_i$ is shifted to the $j$-th column, then the top cell in column $j-1$ is a removable corner of $\mathfrak{c}(\lambda)$. For an illustration of the inverse map $\mathfrak{c}$, see the following example.  
\begin{exam}
	Let $\lambda=(4,4,4,2,1)\in {\rm Par}^{5}(15)$ and $\gamma=\mathfrak{c}(\lambda)$. Then we have the following figure.
		\begin{figure}[htbp]
	\centering
	\ytableausetup{centertableaux, boxsize=1.05em}
	\begin{minipage}{0.225\textwidth}
		\centering
		\subfigure{
			\ydiagram{0+1,0+2,0+4,0+4,0+4}
		}
		\vspace{-1em}
		\caption*{$\lambda=\mathfrak{p}(\gamma)$}
	\end{minipage}
	\begin{minipage}{0.08\textwidth}
		\centering
		\subfigure{
			$\longleftrightarrow$
		}
	\end{minipage}
	\begin{minipage}{0.25\textwidth}
		\centering
		\subfigure{
			\ydiagram{0+1,0+2,1+4,2+4,5+4}
		}
		\vspace{-1em}
		\caption*{$\gamma/\rho$}
	\end{minipage}
	\begin{minipage}{0.1\textwidth}
		\centering
		\subfigure{
			$\longleftrightarrow$
		}
	\end{minipage}
	\begin{minipage}{0.25\textwidth}
		\centering
		\subfigure{
			\ydiagram{0+1,0+2,0+5,0+6,0+9}	
		}
		\vspace{-1em}
		\caption*{$\gamma=\mathfrak{c}(\lambda)$}
	\end{minipage}
	\vspace{-0.5em}
	\caption{The inverse map $\mathfrak{c}$.}\label{fig-example-2.2}
	\vspace{-1.5em}
\end{figure}	
\end{exam}


The hook lengths of the cells of $\mathfrak{c}(\lambda)$ have the following property. 

\begin{prop}[{\cite[Lemma 4]{LM2005}}]\label{prop-gamma/rho}
	Let $\gamma/ \rho$ be the skew diagram obtained in the construction of $\mathfrak{c}$. Then 
	\vspace{-0.5em}
\begin{itemize}
	\item[(1)] the hook lengths of the cells of $\gamma/\rho$ are less than or equal to $k$;\vspace{-1em}
	\item[(2)] the boxes below $\gamma/\rho$ have hook-lengths exceeding $k+2$ in $\gamma$.
\end{itemize}	
\end{prop}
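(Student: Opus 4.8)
\noindent\emph{Proof plan.}\ The plan is to induct on the number of parts $\ell=\ell(\lambda)$, running through the rows of $\lambda$ from top to bottom exactly as the construction of $\mathfrak{c}$ does. The base case $\ell\le 1$ is immediate because no row is shifted. For the inductive step put $\lambda^-=(\lambda_2,\dots,\lambda_\ell)$. The first $\ell-1$ stages in the construction of $\mathfrak{c}(\lambda)$ place rows $2,\dots,\ell$ in precisely the positions they occupy in the construction of $\mathfrak{c}(\lambda^-)$, because the still-unplaced bottom row $\lambda_1$ sits strictly below them and therefore never affects their hook lengths. Writing $\gamma^-/\rho^-$ for the skew diagram attached to $\lambda^-$, the induction hypothesis tells us that every cell of $\gamma^-/\rho^-$ has hook length $\le k$ in $\gamma^-$ and every cell of $\rho^-$ has hook length at least $k+2$ in $\gamma^-$; moreover $\gamma/\rho$ is obtained from $\gamma^-/\rho^-$ by adjoining, as a new bottom row, $\lambda_1$ cells shifted to the right by the least amount $s\ge 0$ for which all of the new row's hook lengths are $\le k$.

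Two elementary facts then carry the argument. First, adjoining a row at the very bottom changes no hook length except those of the newly added cells, since a hook reaches only upward and to the right; thus a cell of $\gamma/\rho$ in rows $2,\dots,\ell$, and likewise a cell of $\rho$ in rows $2,\dots,\ell$, keeps the hook length it had in $\gamma^-$, and both parts follow for all such cells from the induction hypothesis. Second, since $\gamma/\rho$ is an honest skew diagram, $\rho$ is a partition, so $s=\rho_1\ge\rho_i$ for every $i\ge 2$; hence no cell of $\rho$ lies directly above a cell of the new row, which forces the hook length in $\gamma$ of a new-row cell of $\gamma/\rho$ to equal its hook length in the skew diagram $\gamma/\rho$. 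By the defining minimality of $s$ the latter is $\le k$, so part (1) holds.

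It remains to prove part (2) for the new-row cells of $\rho$, i.e.\ the cells $(1,j)$ with $1\le j\le s$; we may assume $s\ge 1$. With $\gamma_1=s+\lambda_1$, the hook length of $(1,j)$ in $\gamma$ is $(s+\lambda_1-j)+\#\{i\ge 2:\gamma_i\ge j\}+1$. By minimality of $s$, placing the new row at shift $s-1$ leaves some cell of the new row with hook length $>k$; any such cell $(1,c^\ast)$ satisfies $c^\ast\ge s$ (the row shifted by $s-1$ begins in column $s$) and has hook length $(s-1+\lambda_1-c^\ast)+\#\{i\ge 2:\rho^-_i<c^\ast\le\gamma^-_i\}+1$ in that configuration. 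Subtracting the two hook-length expressions and using $j\le s\le c^\ast$ together with $\gamma_i=\gamma^-_i$ for $i\ge 2$ leaves $c^\ast-j+1\ge 1$ plus a nonnegative difference of cardinalities, the latter because $\{i\ge 2:\rho^-_i<c^\ast\le\gamma^-_i\}\subseteq\{i\ge 2:\gamma^-_i\ge j\}$. Hence the hook length of $(1,j)$ in $\gamma$ is at least one more than that of $(1,c^\ast)$ at shift $s-1$, so at least $k+2$; this establishes part (2) and completes the induction.

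The step I expect to demand real care is not any of the hook-length bookkeeping above, but the fact — used in the second paragraph — that the minimal-shift procedure genuinely outputs a skew diagram, equivalently that the shifts applied to the successive rows are weakly decreasing from the bottom row upward, so that $\rho$ is a partition. If one prefers not to quote this from \cite{LM2005}, the natural route is to thread it through the same induction: compare the leftmost cell of the row currently being inserted with the leftmost cell of the row immediately above it, observing that the former has the longer arm since $\lambda_i\ge\lambda_{i+1}$, and bound the change in leg lengths by invoking the remark recorded just after the construction of $\mathfrak{c}$ — that shifting a row to start in column $j$ creates a removable corner in column $j-1$ — in combination with Proposition \ref{prop-same-residue} on top cells of equal $k+1$-residue in a $k+1$-core. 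Converting that comparison into the precise inequality that forces the shifts to be monotone is the crux of the whole argument.
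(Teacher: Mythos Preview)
The paper does not supply its own proof of this proposition; it is quoted from \cite[Lemma~4]{LM2005} and used as a black box. So there is no in-paper argument to compare against, and I will simply assess your proof on its own terms.

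Your inductive argument for parts (1) and (2) is correct. The key observations---that adjoining a bottom row leaves every hook in higher rows unchanged, that for a new-row cell of $\gamma/\rho$ the hook in $\gamma$ coincides with the hook computed in the skew configuration once $\rho$ is known to be a partition, and that the minimality of $s$ forces the hook of each $(1,j)$ with $j\le s$ to exceed by at least one a bad hook at shift $s-1$---are all sound, and your subtraction in part~(2) is carried out accurately. (It yields hook $\ge k+2$; the paper's wording ``exceeding $k+2$'' is a slip, as the cell $(1,5)$ in Example~2.2 has hook exactly $7=k+2$.)

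You correctly isolate the monotonicity $\rho_1\ge\rho_2\ge\cdots$ as the one nontrivial ingredient, and quoting it from \cite{LM2005} is fine. But your sketched internal proof via Proposition~\ref{prop-same-residue} is circular: that proposition concerns $k+1$-cores, and at this stage one has not yet shown that $\gamma=\mathfrak{c}(\lambda)$ is a $k+1$-core---indeed the present lemma is a step toward that in \cite{LM2005}. A direct elementary argument avoids this: fix $t<\rho_{i+1}$; by minimality of $\rho_{i+1}$ there is a cell $(i+1,c)$, with $t+1\le c\le t+\lambda_{i+1}$, whose hook (against rows $i+2,\dots,\ell$) exceeds $k$. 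The cell $(i,c)$ with row $i$ placed at the same shift $t$ (and row $i+1$ now at its final shift $\rho_{i+1}$) has arm at least as large since $\lambda_i\ge\lambda_{i+1}$, and leg at least as large since its leg set contains the old one and possibly $i+1$; hence its hook also exceeds $k$. Thus every shift $t<\rho_{i+1}$ is bad for row $i$, giving $\rho_i\ge\rho_{i+1}$. With this substitution your induction closes cleanly.
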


For any $k+1$-core $\gamma$, if there exists a removable corner of $\gamma$ with $k+1$-residue $i$, 
let $s_{i}(\gamma)$ denote the partition obtained by removing all removable corners of $\gamma$ with $k+1$-residue $i$, and if there exists no removable corner with $k+1$-residue $i$, then let $s_{i}(\gamma)=\gamma$. We also need the following result due to Lapointe and Morse in \cite{LM2005}.
	
\begin{prop}[{\cite[Proposition 22]{LM2005}}]\label{prop-realtion-partition-core}
	Let $\lambda\in {\rm Par}^{k}$ and $\gamma=\mathfrak{c}(\lambda)\in \mathcal{C}^{k+1}$. If there exists a removable conner of $\gamma$ with $k+1$-residue $i$, then $s_{i}(\gamma)=\mathfrak{c}(\lambda-e_{r})$, where $r$ is the highest row of $\gamma$ containing a removable corner of $k+1$-residue $i$ and $\lambda-e_{r}$ denotes the partition obtained from $\lambda$ by decreasing the $r$-th component by one. 
\end{prop}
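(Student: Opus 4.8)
Here is the strategy I would follow.

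The plan is to apply the construction of $\mathfrak{c}$ to the partition $\lambda-e_r$ and to check that the output is exactly $s_i(\gamma)$; since $\mathfrak{c}$ is the inverse of $\mathfrak{p}$, this is precisely the assertion. Recall first that the removable corners of $\gamma$ are exactly the rightmost cells $(a,\gamma_a)$ of the rows $a$ with $\gamma_a>\gamma_{a+1}$, and that $(a,\gamma_a)$ has $k+1$-residue $\gamma_a-a \bmod (k+1)$. Thus the rows of $\gamma$ carrying a removable corner of residue $i$ form the set $A_i=\{a:\gamma_a>\gamma_{a+1}\ \text{and}\ \gamma_a\equiv a+i \!\pmod{k+1}\}$, and $r=\max A_i$. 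Once we know $\lambda-e_r\in{\rm Par}^k$, writing $\gamma'=\mathfrak{c}(\lambda-e_r)$ it will suffice to prove that $\gamma'_a=\gamma_a$ for $a\notin A_i$ and $\gamma'_a=\gamma_a-1$ for $a\in A_i$.

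To see that $\lambda-e_r$ is $k$-bounded, i.e. that $\lambda_r>\lambda_{r+1}$, I would use the construction's defining identity $\gamma_a=\rho_a+\lambda_a$ together with the fact that the cells of the skew diagram $\gamma/\rho$ have hook length at most $k$ while those of $\rho$ have hook length exceeding $k$ (Proposition~\ref{prop-gamma/rho}), and the observation recorded in the construction of $\mathfrak{c}$: if row $r$ had been slid (that is, $\rho_r>0$), its slid copy would abut a removable corner of $\gamma$ lying in a strictly higher row, which by Proposition~\ref{prop-same-residue} again has residue $i$, contradicting $r=\max A_i$. Hence $\rho_r=0$, so $\gamma_r=\lambda_r>\gamma_{r+1}=\rho_{r+1}+\lambda_{r+1}\ge\lambda_{r+1}$.

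The heart of the argument is the comparison of the two runs of the construction of $\mathfrak{c}$. Because the placement of a row depends only on the rows lying above it, rows $r+1,\dots,\ell(\lambda)$ are slid in exactly the same way in both runs, so $\gamma'_a=\gamma_a$ for all $a>r$; and since $\rho_r=0$, deleting the last cell of row $r$ merely gives $\gamma'_r=\gamma_r-1$. This deletion decreases by one the ``leg'' obstruction in column $\gamma_r$ felt by the rows below $r$ as they are slid, so some of those rows may be slid less far, and a cascade ensues. The key claim, proved by downward induction on the rows, is: once a row $a\in A_i$ has just lost its last cell, the next row whose slid copy is loosened is exactly the next element of $A_i$ below $a$ (where it again loses exactly one cell), and no row strictly in between is disturbed. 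Granting this, the rows whose length drops by one under $\mathfrak{c}$ are precisely those in $A_i$, i.e. $\gamma'=s_i(\gamma)$.

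I expect the key claim of the previous paragraph to be the main obstacle. The point is to convert ``deleting the cell $(a,\gamma_a)$ loosens the sliding constraint for row $a'$'' into a statement about hook lengths in $\gamma$, and then to pin down the first row $a'<a$ so affected. This is exactly the role of Proposition~\ref{prop-same-residue}: consecutive removable corners of $\gamma$ of the same residue $i$ sit at relative hook length $k+2$ along the boundary of $\gamma$. Combined with the fact from Proposition~\ref{prop-gamma/rho} that the cells of $\gamma$ just below the skew strip $\gamma/\rho$ have hook length only slightly larger than $k$, it is the gap between $k$ and $k+2$ that makes the cascade skip over every row not in $A_i$ and stop precisely at the next residue-$i$ removable corner. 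The remaining steps are routine bookkeeping once the construction of $\mathfrak{c}$ has been unwound carefully.
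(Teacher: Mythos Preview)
The paper does not supply a proof of this proposition: it is quoted verbatim as \cite[Proposition 22]{LM2005} and used as a black box in the construction of the row-filling algorithm. So there is nothing in the paper to compare your argument against.

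That said, your strategy is the natural one and matches in spirit how Lapointe and Morse argue in \cite{LM2005}: unwind the map $\mathfrak{c}$ on $\lambda-e_r$ row by row from the top and identify where the ``slide'' amounts differ from those for $\lambda$. Your reduction to showing $\rho_r=0$ (so that $\lambda_r>\lambda_{r+1}$ and the topmost affected row simply loses its last cell) is correct, and your inductive ``cascade'' claim --- that the loosening propagates exactly through the rows in $A_i$ and skips all others --- is precisely the content of the result. The appeal to Proposition~\ref{prop-same-residue} (consecutive residue-$i$ removable corners sit at boundary distance $k+2$) combined with Proposition~\ref{prop-gamma/rho} (cells just under the skew strip have hook length $\ge k+2$, cells in the strip have hook length $\le k$) is the right mechanism, though you would still need to carry out the bookkeeping carefully: in particular, you must check that when the constraint on row $a'$ is loosened by the removal at row $a\in A_i$, the new slide amount for row $a'$ drops by exactly one (not more), and that no intermediate row $a''$ with $a'<a''<a$ sees any change in its critical hook length. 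These verifications are routine but not entirely trivial; your outline correctly flags them as the main work.
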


\subsection{Dual $k$-Schur functions}

Let us first introduce the definition of Schur functions. Schur functions can be defined in many different ways \cite{StaEC2}, and here we use the expansion of monomial symmetric functions to define a Schur function. 
Given a partition $\lambda=(\lambda_1,\ldots,\lambda_\ell)$, we may identify it with the infinite sequence
$\lambda=(\lambda_1,\ldots,\lambda_\ell,0,0,\ldots)$ and define
the monomial symmetric function $m_{\lambda}$ as 
\[m_{\lambda}=\sum_{\alpha}x^{\alpha},\]
where the sum ranges over all distinct permutations $\alpha$ of $(\lambda_1,\ldots,\lambda_\ell,0,0,\ldots)$. A semistandard Young tableau (SSYT for short) of shape $\lambda$ is a filling of the Young diagram of $\lambda$ with positive integers that are weakly increasing from left to right along each row and strictly increasing from bottom to top along each column. The weight of an SSYT $T$ is the composition $\beta=(\beta_{1},\beta_{2},\ldots)$, where $\beta_{i}$ is the number of $i$'s in $T$. The number of semistandard Young tableaux of shape $\lambda$ and weight $\mu$ is denoted by $K_{\lambda,\mu}$ and is referred to as the Kostka number. 
The Schur function indexed by partition $\lambda$, denoted ${s}_{\lambda}$, is defined by
\[{s}_{\lambda}=\sum_{\mu}K_{\lambda,\mu}m_{\mu}.\]

It is interesting that the vanishing of the term $m_{\mu}$ in the expansion of ${s}_{\lambda}$ can be depicted
by the dominance order on partitions. 
For two partitions $\lambda$ and $\mu$ of $d$, we say that $\mu$ is less than or equal to $\lambda$ in dominance order, denoted $\mu\trianglelefteq \lambda$, if
\[\mu_{1}+\cdots+\mu_{i}\leq\lambda_{1}+\cdots+\lambda_{i}\quad \text{for all } i\geq 1.\]
The following result related to Kostka numbers is well-known.

\begin{prop}[\cite{StaEC2}]\label{thm-Kostka-dominance}
	Let $\lambda,\,\mu $ be two partitions of $d$. Then the Kostka number $K_{\lambda,\,\mu}\neq 0$ if and only if $\mu\trianglelefteq \lambda$.
\end{prop}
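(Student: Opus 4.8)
The plan is to prove the two implications separately: the forward direction $K_{\lambda,\mu}\neq 0\Rightarrow\mu\trianglelefteq\lambda$ by a counting argument on the positions of the small entries of a tableau, and the backward direction $\mu\trianglelefteq\lambda\Rightarrow K_{\lambda,\mu}\neq 0$ by an inductive construction of an explicit semistandard Young tableau.

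For the forward direction, suppose $T$ is an SSYT of shape $\lambda$ and weight $\mu$. The key observation is that, since entries strictly increase from bottom to top along each column, the entry occupying the $r$-th row of any column is at least $r$; hence every cell of $T$ whose entry is $\le i$ lies in one of the bottom $i$ rows. The number of such cells is $\mu_{1}+\cdots+\mu_{i}$, whereas the bottom $i$ rows of $\lambda$ contain only $\lambda_{1}+\cdots+\lambda_{i}$ cells, so $\mu_{1}+\cdots+\mu_{i}\le\lambda_{1}+\cdots+\lambda_{i}$ for every $i$. Since $|\mu|=|\lambda|=d$, this is precisely the assertion $\mu\trianglelefteq\lambda$.

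For the backward direction, I would induct on $\ell(\mu)$. The base case $\ell(\mu)\le 1$ is immediate: then $\mu=(d)$, which together with $\mu\trianglelefteq\lambda$ and $|\lambda|=d$ forces $\lambda=(d)$, and $K_{(d),(d)}=1$. For the inductive step, write $\mu=(\mu_{1},\ldots,\mu_{m})$ with $m=\ell(\mu)\ge 2$ and set $\mu'=(\mu_{1},\ldots,\mu_{m-1})$. I would choose a partition $\lambda'\subseteq\lambda$ such that $\lambda/\lambda'$ is a horizontal strip of size $\mu_{m}$ lying as high as possible in $\lambda$ (occupying cells of the largest possible row indices), verify that $\mu'\trianglelefteq\lambda'$, apply the induction hypothesis to obtain an SSYT of shape $\lambda'$ and weight $\mu'$, and finally fill the cells of $\lambda/\lambda'$ with the letter $m$. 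Because a horizontal strip of $m$'s placed on top of a tableau all of whose entries are $<m$ again satisfies the semistandard conditions, this yields an SSYT of shape $\lambda$ and weight $\mu$, whence $K_{\lambda,\mu}\neq 0$.

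The main obstacle is the verification that $\mu'\trianglelefteq\lambda'$ after this greedy removal (together with the minor point that a horizontal strip of size $\mu_{m}$ can be removed from $\lambda$ at all, which holds since the largest removable horizontal strip has $\lambda_{1}$ cells and $\mu_{m}\le\mu_{1}\le\lambda_{1}$). Letting $t$ be the largest index with $\lambda_{t}\ge\mu_{m}$, the greedy choice leaves rows $1,\ldots,t-1$ of $\lambda$ untouched, so for $i<t$ one has $\sum_{r\le i}\lambda'_{r}=\sum_{r\le i}\lambda_{r}\ge\sum_{r\le i}\mu_{r}\ge\sum_{r\le i}\mu'_{r}$; for $i\ge t$ one counts exactly how many of the $\mu_{m}$ removed cells lie in rows $1,\ldots,i$ and reduces the required inequality to the elementary facts $\mu_{i+1}\ge\mu_{m}$ (weak monotonicity of $\mu$) and $\sum_{r}\lambda_{r}=\sum_{r}\mu_{r}$. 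I expect this bookkeeping to be the only genuinely non-formal part of the argument. (Alternatively, the backward implication can be obtained by induction along the covering relations of the dominance order, using that whenever $\mu$ lies strictly below $\lambda$ there is a partition $\mu^{+}$ with $\mu\trianglelefteq\mu^{+}\trianglelefteq\lambda$, $\mu\neq\mu^{+}$, obtained from $\mu$ by moving a single box to a row of smaller index; but the horizontal-strip peeling above seems the most self-contained.)
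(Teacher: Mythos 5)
Your proposal is correct. Note, however, that the paper does not prove this proposition at all: it is quoted as a classical fact with a citation to Stanley's \emph{Enumerative Combinatorics}, so there is no in-paper argument to match. What is worth pointing out is that your backward direction is, in substance, exactly Fayers' construction that the paper recalls in Section 3 and then adapts to the $k$-setting: peeling off a horizontal strip of size $\mu_m$ placed as high as possible (removing $\lambda_i-\lambda_{i+1}$ cells from each row $i>t$ and $\mu_m-\lambda_{t+1}$ from row $t$, where $t$ is maximal with $\lambda_t\ge\mu_m$) and iterating is precisely the filling the paper describes and illustrates in Example 3.3, including the key observation that the truncated weight remains dominated by the truncated shape. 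Your forward direction is the standard counting argument (entries at most $i$ live in the bottom $i$ rows), which is the usual textbook proof. The only place where your write-up is slightly compressed is the dominance check for indices $i\ge m-1$: there the inequality $\mu_{i+1}\ge\mu_m$ is vacuous, and what you actually need is $\ell(\lambda)\le\ell(\mu)$ (equivalently $\lambda_m\le\mu_m$ when $\ell(\lambda)=m$), which does follow from $\mu\trianglelefteq\lambda$ together with $|\lambda|=|\mu|=d$ and makes the partial sums of $\lambda'$ and $\mu'$ agree at $i=m-1$; so the bookkeeping closes as you indicate, but that case deserves the explicit sentence.
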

  
We proceed to introduce the definition of dual $k$-Schur functions. These functions are also known as 
affine Schur functions.
It was Lapointe and Morse who named dual $k$-Schur functions in \cite{LM2008}
by providing a definition in terms of semistandard $k$-tableau.
Dalal and Morse gave an alternative definition by using affine Bruhat counter-tableaux in \cite{DM2012}.
Dual $k$-Schur functions also appear as affine Schur functions, a class of affine Stanley symmetric functions indexed by affine Grassmannian permutations, which are introduced by Lam \cite{LAM2006}.  
For the equivalence between several equivalent formulations, see \cite{DM2012} and \cite{LLMSSZ2014}.
In this paper, we will adopt the definition given by Lapointe and Morse \cite{LM2008}.

Given $\lambda\in {\rm Par}^{k}(d)$, let $\alpha=(\alpha_{1},\alpha_{2},\ldots)$ be a composition of $d$. 
 A semistandard $k$-tableau ($k$-SSYT for short) of shape $\mathfrak{c}(\lambda)$ and $k$-weight $\alpha$ is an SSYT of shape $\mathfrak{c}(\lambda)$ such that the collection of cells filled with letter $i$ have exactly $\alpha_{i}$ distinct $k+1$-residues. For example, for $\lambda=(3,2,1,1)$ and $k=3$ there are two $k$-SSYTs of shape $\mathfrak{c}(\lambda)$ and $k$-weight $(1,2,1,2,1)$, as shown in Figure \ref{fig-ex-SSYT-kSSYT}, where the integer in the lower right corner of each cell indicates its $k+1$-residue.
 \begin{figure}[H]
	\setlength{\abovecaptionskip}{-2pt}
	\centering
	\begin{minipage}{0.25\textwidth}
		\centering
		\subfigure{
			\ktableau{}{{1},{2},{3,0,1},{0,1,2,3,0,1}}{{5},{4},{3,4,5},{1,2,2,3,4,5}}{}{}{}
		}
	\end{minipage}
	\begin{minipage}{0.25\textwidth}
		\centering
		\subfigure{
			\label{fig-ex-kSSYT-2}\ktableau{}{{1},{2},{3,0,1},{0,1,2,3,0,1}}{{5},{3},{2,4,5},{1,2,3,4,4,5}}{}{}{}
		}
	\end{minipage}
	\caption{Two $3$-SSYTs.}\label{fig-ex-SSYT-kSSYT}
	\vspace{-1.3em}
\end{figure}
The $k$-Kostka number, denoted by $K_{\lambda,\alpha}^{(k)}$, is the number of semistandard $k$-tableaux of shape $\mathfrak{c}(\lambda)$ and $k$-weight $\alpha$.
Using a generalization of the Bender-Knuth involution, Lapointe and Morse \cite{LM2007} obtained the following result.
\begin{prop}[{\cite[Corollary 25]{LM2007}}]\label{property-k-Kostka-number-2}
	Let $\lambda\in {\rm Par}^{k}(d)$ and let $\alpha$ be any composition of $d$. Then we have $K_{\lambda,\alpha}^{(k)}=K_{\lambda,p(\alpha)}^{(k)},$	where $p(\alpha)$ is the partition obtained by rearranging $\alpha$ in nonincreasing order.
\end{prop}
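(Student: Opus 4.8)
The plan is to adapt the classical Bender--Knuth involution, which yields the symmetry $K_{\lambda,\mu}=K_{\lambda,\sigma\mu}$ of ordinary Kostka numbers, so that it keeps track of $k+1$-residues rather than raw cell multiplicities. Since the adjacent transpositions $\sigma_i=(i,i+1)$ generate the symmetric group, and any composition $\alpha$ of $d$ can be sorted into the partition $p(\alpha)$ by a finite sequence of adjacent transpositions, it suffices to produce, for each $i\ge 1$, a bijection between the set of $k$-SSYT of shape $\mathfrak{c}(\lambda)$ with $k$-weight $\alpha$ and the set of those with $k$-weight $\sigma_i\alpha$, where $\sigma_i\alpha$ is obtained from $\alpha$ by interchanging $\alpha_i$ and $\alpha_{i+1}$. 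Composing such bijections then gives $K^{(k)}_{\lambda,\alpha}=K^{(k)}_{\lambda,p(\alpha)}$.

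To construct the map for a fixed $i$, I would pass to the description of a $k$-SSYT $T$ of shape $\mathfrak{c}(\lambda)=\gamma$ as a chain of $k+1$-cores $\emptyset=\gamma^{(0)}\subseteq\gamma^{(1)}\subseteq\cdots\subseteq\gamma^{(n)}=\gamma$, where $\gamma^{(j)}$ is the set of cells of $T$ labelled at most $j$; in this language the $k$-weight condition says precisely that the horizontal strip $\gamma^{(j)}/\gamma^{(j-1)}$ occupies exactly $\alpha_j$ distinct $k+1$-residues. Only the window $\gamma^{(i-1)}\subseteq\gamma^{(i)}\subseteq\gamma^{(i+1)}$ is to be modified, so the problem localizes to the following statement: given $k+1$-cores $\tau:=\gamma^{(i-1)}\subseteq\gamma^{(i+1)}=:\omega$ and the intermediate core $\delta:=\gamma^{(i)}$, where $\delta/\tau$ uses $\alpha_i$ residues and $\omega/\delta$ uses $\alpha_{i+1}$ residues, there is a $k+1$-core $\delta'$ with $\tau\subseteq\delta'\subseteq\omega$ for which $\delta'/\tau$ uses $\alpha_{i+1}$ residues and $\omega/\delta'$ uses $\alpha_i$ residues, and moreover the assignment $\delta\mapsto\delta'$ is an involution on the set of intermediate cores between $\tau$ and $\omega$. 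Re-labelling the cells of $\omega/\tau$ so that $\delta'$ becomes the new ``$\le i$'' core turns $T$ into a $k$-SSYT of $k$-weight $\sigma_i\alpha$, and the involutivity of $\delta\mapsto\delta'$ promotes this to the required bijection. This is the faithful generalization of the row-wise rewrite $i^{a}(i+1)^{b}\mapsto i^{b}(i+1)^{a}$ of the ordinary Bender--Knuth involution, which is recovered in the range of $k$ for which $\mathfrak{c}(\lambda)=\lambda$ and all residues become distinct.

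The crux, and the expected main obstacle, is the local existence and uniqueness of $\delta'$: one must show that the ``residue content'' of two consecutive horizontal strips $\delta/\tau$ and $\omega/\delta$ can always be exchanged by an honest choice of intermediate $k+1$-core, and that exactly one choice is compatible with this exchange. I would analyze this residue class by residue class, tracking how the removable and addable corners of $\delta$ lying in the band $\omega/\tau$ are distributed among the $k+1$ residue classes. Here the structure theory of cores is essential: Proposition \ref{prop-same-residue} controls, through the quantity $h_\gamma(\cdot,\cdot)$, exactly when two top cells of a core share a $k+1$-residue, and Proposition \ref{prop-realtion-partition-core} shows that removing all removable corners of a fixed residue corresponds to decrementing a single part of the associated $k$-bounded partition; together these let one match the residues gained in passing $\tau\to\delta$ with those gained in passing $\delta\to\omega$ and re-slot them to build $\delta'$. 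Once this local move is pinned down, the global involution assembles formally and gives $K^{(k)}_{\lambda,\alpha}=K^{(k)}_{\lambda,\sigma_i\alpha}$ for every $i$, hence Proposition \ref{property-k-Kostka-number-2}.
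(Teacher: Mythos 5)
The paper itself does not prove this proposition: it is quoted verbatim from Lapointe and Morse \cite[Corollary 25]{LM2007}, where it is established by a generalization of the Bender--Knuth involution. Your strategy is therefore the right one in spirit --- reduce to adjacent transpositions $\sigma_i$ and build a residue-aware Bender--Knuth involution --- but as written it has a genuine gap: the ``local exchange'' statement (given $k+1$-cores $\tau\subseteq\omega$ and an intermediate core $\delta$ with $\delta/\tau$, $\omega/\delta$ horizontal strips occupying $a$ and $b$ distinct residues, produce an involution on intermediate cores swapping $a$ and $b$) is exactly the entire mathematical content of Lapointe--Morse's argument, and you only announce a plan for it. The tools you invoke do not obviously deliver it. Proposition \ref{prop-same-residue} controls when two top cells of a single core share a residue, and Proposition \ref{prop-realtion-partition-core} describes removing \emph{all} removable corners of one residue (decrementing one part of the $k$-bounded partition); neither addresses how the \emph{distinct-residue counts} of two consecutive horizontal strips inside a core can be redistributed. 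This is genuinely harder than the classical case because in a $k$-SSYT the letter $i$ may occupy several cells of the same residue (the strip $\gamma^{(i)}/\gamma^{(i-1)}$ typically has more than $\alpha_i$ cells), so the exchange is not a cell-count swap and cannot be done row by row; one must control wrap-around repetitions of residues across the whole strip while preserving the horizontal-strip and core conditions.

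A secondary inaccuracy: you assert that ``exactly one choice'' of $\delta'$ is compatible with the exchange. Uniqueness is neither needed nor true in general; what is required (and what Lapointe--Morse construct) is a weight-swapping involution, equivalently a bijection between the set of intermediate cores realizing the split $(a,b)$ and the set realizing $(b,a)$. The reduction from arbitrary compositions to $p(\alpha)$ via adjacent transpositions is fine, but until the local involution is actually constructed and shown to preserve the $k$-core and horizontal-strip constraints, the proof is a skeleton of the cited argument rather than a proof.
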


The dual $k$-Schur function indexed by a $k$-bounded partition $\lambda$, denoted $\mathfrak{S}_{\lambda}^{(k)}$, is defined as
\[\mathfrak{S}_{\lambda}^{(k)}=\sum_{T}x^{k\text{-weight}(T)},\]
where $T$ ranges over all $k$-SSYTs of shape $\mathfrak{c}(\lambda)$. 
By virtue of Proposition \ref{property-k-Kostka-number-2}, $\mathfrak{S}_{\lambda}^{(k)}$ is a symmetric function,
and it admits the following expansion in terms of monomial symmetric functions.  

\begin{prop}[{\cite[Proposition 6.1]{LM2008}}]\label{pop-dual-schur-to-monmial}
	For any $\lambda\in {\rm Par}^{k}$, we have
	\begin{equation}\label{eq-affine-schur-to-monamial}
		\mathfrak{S}_{\lambda}^{(k)}=m_{\lambda}+\sum_{\mu\triangleleft\lambda}K_{\lambda,\mu}^{(k)}m_{\mu}.
	\end{equation}
\end{prop}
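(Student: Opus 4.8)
\medskip
\noindent\textbf{Proof proposal.} I would argue directly from the $k$-tableau definition of $\mathfrak{S}_{\lambda}^{(k)}$ (a shorter route is indicated at the end), using the standard reformulation of a $k$-SSYT of shape $\mathfrak{c}(\lambda)$ as a chain of $k{+}1$-cores $\emptyset=\gamma^{(0)}\subseteq\gamma^{(1)}\subseteq\cdots\subseteq\gamma^{(m)}=\mathfrak{c}(\lambda)$ in which each $\gamma^{(i)}/\gamma^{(i-1)}$ is a horizontal strip (the letter $i$ filling $\gamma^{(i)}/\gamma^{(i-1)}$), the $k$-weight $\alpha$ having $\alpha_{i}$ equal to the number of distinct $k{+}1$-residues appearing in $\gamma^{(i)}/\gamma^{(i-1)}$ (cf.\ \cite{LLMSSZ2014}). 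By Proposition \ref{property-k-Kostka-number-2}, $\mathfrak{S}_{\lambda}^{(k)}$ is symmetric and equals $\sum_{\mu}K_{\lambda,\mu}^{(k)}m_{\mu}$ with $\mu$ ranging over partitions of $d$, so \eqref{eq-affine-schur-to-monamial} reduces to the two claims (A) $K_{\lambda,\lambda}^{(k)}=1$ and (B) $K_{\lambda,\mu}^{(k)}\neq0$ implies $\mu\trianglelefteq\lambda$. The one input from outside the excerpt that I would need is the following structural property of the bijection $\mathfrak{p}$, which I expect to be the crux and which can be extracted from the constructions of $\mathfrak{p}$ and $\mathfrak{c}$ together with Propositions \ref{prop-gamma/rho}, \ref{prop-same-residue} and \ref{prop-realtion-partition-core} (see also \cite{LM2005}): \emph{if $\delta\subseteq\gamma$ are $k{+}1$-cores with $\gamma/\delta$ a horizontal strip, then $\mathfrak{p}(\delta)\subseteq\mathfrak{p}(\gamma)$ and $|\mathfrak{p}(\gamma)|-|\mathfrak{p}(\delta)|$ is the number of distinct $k{+}1$-residues occurring in $\gamma/\delta$; moreover, for any partition $\mu$ and any $i\le\ell(\mu)$ one has $\mathfrak{c}(\mu_{1},\ldots,\mu_{i-1})\subseteq\mathfrak{c}(\mu_{1},\ldots,\mu_{i})$ with horizontal-strip quotient.} I will call this $(\star)$.

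For (B) I would induct on $d$. Given a $k$-SSYT $T$ of shape $\mathfrak{c}(\lambda)$ whose $k$-weight is a partition $\mu$ with $\ell:=\ell(\mu)$ parts, let $\emptyset=\gamma^{(0)}\subsetneq\cdots\subsetneq\gamma^{(\ell)}=\mathfrak{c}(\lambda)$ be the associated core chain (there are exactly $\ell$ nonempty steps, since letter $i$ is used iff $\mu_{i}>0$). As $\mathfrak{c}$ merely shifts rows horizontally, $\ell(\mathfrak{c}(\lambda))=\ell(\lambda)$, and the $\ell(\lambda)$ cells of the first column of $\mathfrak{c}(\lambda)$ carry distinct letters, so $\ell(\lambda)\le\ell$; hence $\mu_{1}+\cdots+\mu_{i}=d=\lambda_{1}+\cdots+\lambda_{i}$ for $i\ge\ell$, and it remains to treat $i<\ell$. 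Now $\gamma^{(\ell-1)}=\mathfrak{c}(\nu)$ where $\nu:=\mathfrak{p}(\gamma^{(\ell-1)})$, and the truncation of $T$ to letters $<\ell$ is a $k$-SSYT of shape $\mathfrak{c}(\nu)$ of $k$-weight $(\mu_{1},\ldots,\mu_{\ell-1})$, a partition of $|\nu|<d$. Applying the first assertion of $(\star)$ to the horizontal strip $\gamma^{(\ell)}/\gamma^{(\ell-1)}$ gives $\nu\subseteq\lambda$, so $\nu_{j}\le\lambda_{j}$ for all $j$; and the inductive hypothesis gives $(\mu_{1},\ldots,\mu_{\ell-1})\trianglelefteq\nu$. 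Then for $i<\ell$,
\[\mu_{1}+\cdots+\mu_{i}\le\nu_{1}+\cdots+\nu_{i}\le\lambda_{1}+\cdots+\lambda_{i},\]
so $\mu\trianglelefteq\lambda$, which proves (B).

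For (A): when $\mu=\lambda$ the displayed inequalities become equalities, which forces $\nu_{i}=\lambda_{i}$ for $i<\ell$; since $\gamma^{(\ell-1)}$ has at most $\ell-1$ rows we also get $\ell(\nu)\le\ell-1$, hence $\nu=(\lambda_{1},\ldots,\lambda_{\ell-1})$ and $\gamma^{(\ell-1)}=\mathfrak{c}(\lambda_{1},\ldots,\lambda_{\ell-1})$ is determined. Inductively the whole chain, and therefore $T$, is determined, so $K_{\lambda,\lambda}^{(k)}\le1$. For existence, put $\gamma^{(i)}:=\mathfrak{c}(\lambda_{1},\ldots,\lambda_{i})$ for $0\le i\le\ell(\lambda)$; by the second assertion of $(\star)$ this is a chain of $k{+}1$-cores with horizontal-strip quotients, and by the first assertion $\gamma^{(i)}/\gamma^{(i-1)}$ has exactly $|\mathfrak{p}(\gamma^{(i)})|-|\mathfrak{p}(\gamma^{(i-1)})|=\lambda_{i}$ distinct residues; filling $\gamma^{(i)}/\gamma^{(i-1)}$ with $i$ then produces a $k$-SSYT of shape $\mathfrak{c}(\lambda)$ with $k$-weight exactly $\lambda$. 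Hence $K_{\lambda,\lambda}^{(k)}=1$, and \eqref{eq-affine-schur-to-monamial} follows.

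The hard part is $(\star)$: that enlarging a $k{+}1$-core by a horizontal strip increases $\mathfrak{p}$ by exactly one box per newly created residue and never decreases $\mathfrak{p}$. I would prove its first half by partitioning the horizontal strip into residue classes and removing them one class at a time via Proposition \ref{prop-realtion-partition-core}, using Proposition \ref{prop-same-residue} to justify a legal order of removal, and its second half by reading the row-shifting description of $\mathfrak{c}$ off Proposition \ref{prop-gamma/rho}. Finally, a much shorter alternative avoids $(\star)$ altogether: the $k$-Schur functions $s_{\mu}^{(k)}$ expand unitriangularly in the Schur basis with respect to dominance, and, since $\mathfrak{S}_{\lambda}^{(k)}$ and $s_{\mu}^{(k)}$ form dual bases while the $s_{\mu}$ are self-dual under the Hall inner product, this forces $\mathfrak{S}_{\lambda}^{(k)}=s_{\lambda}+\sum_{\mu\triangleleft\lambda}c_{\lambda\mu}s_{\mu}$; expanding each Schur function via Proposition \ref{thm-Kostka-dominance} then yields \eqref{eq-affine-schur-to-monamial}.
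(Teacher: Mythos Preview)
The paper does not prove this proposition: it is quoted from \cite{LM2008}, and in the paper's logical layout it is simply the combination of the two cited facts that precede and follow it --- Proposition~\ref{property-k-Kostka-number-2} gives symmetry, hence $\mathfrak{S}_{\lambda}^{(k)}=\sum_{\mu}K_{\lambda,\mu}^{(k)}m_{\mu}$, and Proposition~\ref{property-k-Kostka-number-1} supplies the unitriangularity $K_{\lambda,\mu}^{(k)}\neq 0\Rightarrow\mu\trianglelefteq\lambda$ together with $K_{\lambda,\lambda}^{(k)}=1$. So the ``paper's proof'' is just: cite these two results.

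Your proposal is correct but does substantially more than the paper: you reprove the content of Proposition~\ref{property-k-Kostka-number-1} from scratch via the inductive peeling argument and the structural lemma~$(\star)$. The induction for (B) and the uniqueness half of (A) are clean and correct (your observation that the first column forces $\ell(\lambda)\le\ell(\mu)$ and, in the $\mu=\lambda$ case, pins down the top letter is exactly what is needed). The genuine work, as you identify, sits in $(\star)$; this is indeed extractable from \cite{LM2005}, though the claim that a horizontal strip between $k{+}1$-cores can be removed one residue class at a time (yielding $\mathfrak{p}(\delta)\subseteq\mathfrak{p}(\gamma)$ via repeated use of Proposition~\ref{prop-realtion-partition-core}) requires a little more care than your sketch suggests --- one has to check that at each stage some residue class of the remaining strip consists entirely of removable corners of the current core. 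Your duality shortcut at the end is also valid but imports the Schur-unitriangularity of $k$-Schur functions, which lies outside the excerpt. In short: nothing is wrong, but relative to the paper you are reproving a cited lemma rather than offering an alternative to a proof that appears here.
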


The $k$-Kostka numbers also satisfy a triangularity property similar to the Kostka numbers, due to the following result. 

\begin{prop}[{\cite[Theorem 65]{LM2005}}]\label{property-k-Kostka-number-1}
	Let $\lambda,\,\mu\in {\rm Par}^{k}(d)$. Then
	$K_{\lambda,\mu}^{(k)}\neq 0 \text{ only if }\mu\trianglelefteq \lambda \text{, and }K_{\lambda,\lambda}^{(k)}=1.$
\end{prop}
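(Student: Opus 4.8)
If one is content to quote Proposition \ref{pop-dual-schur-to-monmial}, the statement is immediate: comparing that expansion with $\mathfrak{S}^{(k)}_{\lambda}=\sum_{\mu}K_{\lambda,\mu}^{(k)}m_{\mu}$ gives $K_{\lambda,\lambda}^{(k)}=1$ and $K_{\lambda,\mu}^{(k)}=0$ whenever $\mu\ntrianglelefteq\lambda$. Since that expansion is usually deduced \emph{from} the present statement, I will instead argue combinatorially, by induction on $d=|\lambda|$, proving at once that (i) every $k$-weight occurring for the shape $\mathfrak{c}(\lambda)$ is $\trianglelefteq\lambda$, and (ii) there is exactly one $k$-SSYT of shape $\mathfrak{c}(\lambda)$ with $k$-weight $\lambda$. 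The case $d=0$ is trivial. For the inductive step, let $T$ be a $k$-SSYT of shape $\gamma=\mathfrak{c}(\lambda)$ and $k$-weight $\mu$, and let $m$ be the largest entry of $T$; since $\mu$ is a partition, the letters used are exactly $1,\ldots,m$ with $\mu_{m}\geq 1$. Because $T$ is column-strict and $\gamma$ has exactly $\ell(\lambda)$ rows, $m\geq\ell(\lambda)$. By the definition of a semistandard $k$-tableau, for every $j$ the cells of $T$ with entries at most $j$ form a $k+1$-core $\gamma^{(j)}$; in particular $\gamma^{(m-1)}\subseteq\gamma$ is a core, the letter $m$ fills the horizontal strip $\gamma/\gamma^{(m-1)}$, this strip carries exactly $\mu_{m}$ distinct $k+1$-residues, and deleting it leaves a $k$-SSYT of shape $\gamma^{(m-1)}$ and $k$-weight $(\mu_{1},\ldots,\mu_{m-1})$ supported in rows $1,\ldots,m-1$.

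The heart of the matter is the claim that $\gamma^{(m-1)}=\mathfrak{c}(\nu)$ for a $k$-bounded partition $\nu$ obtained from $\lambda$ by deleting $\mu_{m}$ cells, so that $\nu\subseteq\lambda$ and $|\nu|=d-\mu_{m}$. To prove it I would strip $\gamma/\gamma^{(m-1)}$ off $\gamma$ one residue class at a time, from the outside in: Proposition \ref{prop-same-residue} governs when two cells of the strip can share a residue and, combined with the horizontal-strip and column-strict conditions, shows that the cells of the current outermost residue $i$ are exactly the removable corners of residue $i$ of the current core; Proposition \ref{prop-realtion-partition-core} then identifies each step $\gamma'\mapsto s_{i}(\gamma')$ with passing from $\mathfrak{c}$ of a $k$-bounded partition to $\mathfrak{c}$ of that partition with one cell removed, while Proposition \ref{prop-gamma/rho} guarantees the removed cells are the ``small-hook'' ones so the process stays inside the image of $\mathfrak{c}$. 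After $\mu_{m}$ such steps we have deleted $\mu_{m}$ cells from $\lambda$ and arrived at $\gamma^{(m-1)}$, which proves the claim.

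With the claim in hand the induction closes. For (i): the inductive hypothesis applied to $\nu$ gives $(\mu_{1},\ldots,\mu_{m-1})\trianglelefteq\nu$, so for $i<m$ we have $\mu_{1}+\cdots+\mu_{i}\leq\nu_{1}+\cdots+\nu_{i}\leq\lambda_{1}+\cdots+\lambda_{i}$ (the last step because $\nu\subseteq\lambda$), while for $i\geq m\geq\ell(\lambda)$ both partial sums equal $d$; hence $\mu\trianglelefteq\lambda$. For (ii): when $\mu=\lambda$ we get $m=\ell(\lambda)$, and $\nu$ has at most $\ell-1$ rows, is contained in $\lambda$, and has size $\lambda_{1}+\cdots+\lambda_{\ell-1}$, which forces $\nu=(\lambda_{1},\ldots,\lambda_{\ell-1})$; then the letter $m$ occupies the completely determined skew shape $\mathfrak{c}(\lambda)/\mathfrak{c}((\lambda_{1},\ldots,\lambda_{\ell-1}))$, and the inductive uniqueness applied to $(\lambda_{1},\ldots,\lambda_{\ell-1})$ pins down the rest of $T$, so $K_{\lambda,\lambda}^{(k)}\leq 1$; that $K_{\lambda,\lambda}^{(k)}\geq 1$ follows by filling the nested cores $\mathfrak{c}((\lambda_{1}))\subseteq\mathfrak{c}((\lambda_{1},\lambda_{2}))\subseteq\cdots\subseteq\mathfrak{c}(\lambda)$ layer by layer, the containments and column-strictness here being routine consequences of the $\mathfrak{c}$-construction.

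I expect the structural claim of the second paragraph to be the real work: the rest is bookkeeping, but pinning down exactly which cells of the strip $\gamma/\gamma^{(m-1)}$ may share a residue, and matching the successive peels with the operators $s_{i}$, forces one into the hook-length geometry of $k+1$-cores through Propositions \ref{prop-same-residue}, \ref{prop-gamma/rho} and \ref{prop-realtion-partition-core}. One might hope to bypass it by injecting $k$-SSYTs of shape $\mathfrak{c}(\lambda)$ and $k$-weight $\mu$ into ordinary semistandard Young tableaux of shape $\lambda$ and weight $\mu$ --- which would give $K_{\lambda,\mu}^{(k)}\leq K_{\lambda,\mu}$ and then let Proposition \ref{thm-Kostka-dominance} finish --- but I know of no canonical such map, so I would fall back on the induction above.
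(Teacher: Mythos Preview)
The paper does not give its own proof of this proposition: it is quoted verbatim as \cite[Theorem 65]{LM2005} and used as a black box. So there is nothing in the paper to compare your argument against.

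That said, your sketch is essentially the Lapointe--Morse strategy and is sound in outline, with one point that deserves care. You write ``by the definition of a semistandard $k$-tableau, for every $j$ the cells of $T$ with entries at most $j$ form a $k+1$-core $\gamma^{(j)}$.'' With the definition the present paper uses (an ordinary SSYT of shape $\mathfrak{c}(\lambda)$ whose letter-$i$ cells carry exactly $\alpha_i$ residues), this is \emph{not} immediate; it is a theorem, and in \cite{LM2005} it is precisely part of what is being established en route to the triangularity. Your second paragraph in fact contains the ingredients to prove it: peeling the horizontal strip $\gamma/\gamma^{(m-1)}$ one residue class at a time and invoking Proposition~\ref{prop-realtion-partition-core} shows simultaneously that $\gamma^{(m-1)}$ is a $k+1$-core and that $\mathfrak{p}(\gamma^{(m-1)})=\nu\subseteq\lambda$ with $|\nu|=|\lambda|-\mu_m$. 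So the fix is organisational: do not assert the core property up front, but derive it inside the peeling argument. Once that is done, your dominance bookkeeping for (i) and the forced-filling argument for (ii) go through as written.

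Your side remark about circularity is also correct: within the original sources, Proposition~\ref{pop-dual-schur-to-monmial} is deduced from the triangularity of $K^{(k)}$, so one should not use it here even though this paper cites both results independently.
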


Comparing Proposition \ref{thm-Kostka-dominance} and Proposition \ref{property-k-Kostka-number-1}, we are motivated to study whether the condition $\mu\trianglelefteq \lambda$ is also sufficient for $K_{\lambda,\mu}^{(k)}\neq 0$.
It seems that this sufficiency is still unknown, which will be explored in the next section.

\section{M-convexity of dual $k$-Schur polynomials}\label{sec-SNP-affine-schur}

The objective of this section is to establish the M-convexity of dual $k$-Schur polynomials. 
The main idea is to show that for a $k$-bounded partition $\lambda$ the dual $k$-Schur polynomial 
$\mathfrak{S}_{\lambda}^{(k)}$ has the same support with the corresponding Schur polynomial $s_{\lambda}$. 
In view of Proposition \ref{thm-Kostka-dominance} and Proposition \ref{property-k-Kostka-number-1}, 
it remains to prove the ``if'' direction of the latter proposition. 
Thus given two $k$-bounded partitions $\lambda$ and $\mu$ with $\mu\trianglelefteq \lambda $, 
to construct a $k$-SSYT of shape $\mathfrak{c}(\lambda)$ and $k$-weight $\mu$ will be our main task of this section.

It is known that if $\mu\trianglelefteq \lambda $ then there must exist an SSYT of shape $\lambda$ and weight $\mu$. However, unlike the classical case, the existence of a $k$-SSYT of shape $\mathfrak{c}(\lambda)$ and $k$-weight $\mu$ is not so evident.
The following property on the $k+1$-residues of top cells of $\mathfrak{c}(\lambda)$ is critical for generating such a $k$-SSYT.

\begin{prop}\label{lem-enough-residue} 
Suppose that $\lambda=(\lambda_1,\ldots,\lambda_{\ell})\in {\rm Par}^{k}$, and let $R(i)$ denote the set of all distinct $k+1$-residues of the top cells in the $i$-th row of $\mathfrak{c}(\lambda)$. Then, for any $1\leq i\leq \ell$, we have
\begin{align}\label{eq-topcell-difference}
	|\cup_{j=i}^{\ell}R(j)|=\lambda_{i}.
\end{align}
\end{prop}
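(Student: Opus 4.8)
The plan is to analyze the structure of $\mathfrak{c}(\lambda)$ directly in terms of the construction of the inverse map $\mathfrak{c}$, and to induct on the rows from the top down. Recall that in building $\gamma = \mathfrak{c}(\lambda)$ we process the rows $\lambda_\ell, \lambda_{\ell-1}, \ldots, \lambda_1$, sliding each row $\lambda_i$ rightward until it first has no hook length exceeding $k$ in $\gamma$. By Proposition \ref{prop-gamma/rho}, the resulting skew shape $\gamma/\rho$ has all hook lengths at most $k$, and every cell strictly below $\gamma/\rho$ has hook length exceeding $k+2$ in $\gamma$. The top cells in row $i$ of $\gamma$ are exactly the $\lambda_i$ cells of $\gamma/\rho$ sitting in row $i$ (since the portion of $\gamma$ in row $i$ below $\gamma/\rho$ consists of cells of $\rho$, which are not top cells as row $i$ of $\gamma/\rho$ lies directly above them). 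So $R(i)$ is the set of $k+1$-residues of the $\lambda_i$ consecutive cells forming row $i$ of $\gamma/\rho$; since these are $\lambda_i \le k$ consecutive cells, their residues are pairwise distinct, giving $|R(i)| = \lambda_i$. The content of \eqref{eq-topcell-difference} is therefore that the residues appearing in rows $i, i+1, \ldots, \ell$ of $\gamma/\rho$ contribute nothing new beyond those already in row $i$: i.e. $\cup_{j>i} R(j) \subseteq R(i)$.

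The key geometric input I would use is Proposition \ref{prop-same-residue}, together with the observation embedded in the description of $\mathfrak{c}$: if the leftmost cell of the shifted row $\lambda_i$ lands in column $j$, then the top cell in column $j-1$ is a removable corner of $\gamma$. First I would reduce to proving, for each $i < \ell$, that $R(i+1) \subseteq R(i) \cup R(i+1)$... more precisely the clean inductive step is $\cup_{j=i}^{\ell} R(j) = R(i) \cup \bigl(\cup_{j=i+1}^{\ell} R(j)\bigr)$, so it suffices to show $\cup_{j=i+1}^{\ell}R(j) \subseteq R(i)$ given (by induction) that $\cup_{j=i+1}^{\ell}R(j) = R(i+1)$ has size $\lambda_{i+1}$. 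Thus the whole statement collapses to the single claim: for every $i$, $R(i+1) \subseteq R(i)$, i.e. every residue occurring among the top cells of row $i+1$ already occurs among the top cells of row $i$. Since rows $i$ and $i+1$ of $\gamma/\rho$ are horizontal strips of consecutive cells with $\lambda_{i+1} \le \lambda_i$, and row $i+1$ sits weakly to the right of row $i$ (the sliding only moves rows rightward, and row $i+1$ was placed before — above — row $i$, but its left edge is at a column $\ge$ that of row $i$ because otherwise there would be a hook of length $> k$ straddling the two rows; this is exactly where Proposition \ref{prop-gamma/rho}(1) applied to the two-row overlap is used), the residues of row $i+1$ form a cyclic interval of length $\lambda_{i+1}$ inside the cyclic interval of length $\lambda_i$ given by row $i$ — hence contained in it.

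The step I expect to be the main obstacle is pinning down the horizontal alignment of consecutive rows of $\gamma/\rho$ rigorously: namely that the leftmost column of row $i+1$ in $\gamma/\rho$ is $\ge$ the leftmost column of row $i$, and that the rightmost column of row $i+1$ is $\le$ the rightmost column of row $i$ (equivalently, row $i+1$ of $\gamma/\rho$ is "nested" column-wise inside row $i$). The right-hand containment is essentially the semistandard/column-strict structure forced by $\gamma$ being a partition. For the left-hand containment I would argue by contradiction: if row $i+1$ started strictly to the left of row $i$, then in $\gamma$ the cell at the bottom-left of row $i+1$'s strip would have a hook reaching down through row $i$ and beyond, and combining the $\gamma/\rho$ hook bound (Proposition \ref{prop-gamma/rho}(1)) with the "below $\gamma/\rho$ has hook $> k+2$" bound (Proposition \ref{prop-gamma/rho}(2)) yields a contradiction with the minimality in the sliding step that defined row $i+1$'s position. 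Once alignment is established, the residue containment $R(i+1)\subseteq R(i)$ is immediate from the definition of $k+1$-residue as $j - i \bmod (k+1)$: shifting up one row and right by the column offset $t$ changes each residue by $t-1 \bmod (k+1)$, a fixed shift, so row $i+1$'s residues are a contiguous sub-window of row $i$'s, and I would also need $\lambda_i \le k$ to ensure row $i$'s residues do not "wrap around" and collide, keeping $|R(i)| = \lambda_i$ honest. Assembling the induction then gives \eqref{eq-topcell-difference} for all $i$.
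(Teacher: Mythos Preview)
Your argument rests on the claim that the top cells in row $i$ of $\gamma=\mathfrak{c}(\lambda)$ are exactly the $\lambda_i$ cells of $\gamma/\rho$ lying in row $i$, so that $|R(i)|=\lambda_i$. This is false. A cell $(i,j)$ of $\gamma/\rho$ is a top cell only when $j>\gamma_{i+1}$, and in general row $i+1$ of $\gamma$ overhangs part of row $i$ of $\gamma/\rho$. In the paper's running example $\lambda=(4,4,4,2,1)$, $k=5$, one has $\gamma=(9,6,5,2,1)$ and $\rho=(5,2,1,0,0)$; row $2$ of $\gamma/\rho$ occupies columns $3$--$6$, but since $\gamma_3=5$ only column $6$ is a top cell, so $R(2)=\{4\}$ while $\lambda_2=4$. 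The same example kills your inductive step $R(i+1)\subseteq R(i)$: here $R(5)=\{2\}$ and $R(4)=\{4\}$, so $R(5)\not\subseteq R(4)$. Your ``nesting'' picture is also reversed: since $\rho$ is a partition we have $t_i\ge t_{i+1}$, so row $i+1$ of $\gamma/\rho$ starts weakly to the \emph{left} of row $i$, not to the right.

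The actual mechanism is more delicate than a chain of inclusions $R(\ell)\subseteq\cdots\subseteq R(1)$. The paper partitions row $i$ of $\gamma$ into four blocks $A(i),B(i),C(i),D(i)$ (with $D(i)$ the genuine top cells) and tracks the top cells sitting above each block. The identity $\lambda_i=|C(i)|+|D(i)|$ is then matched with a residue count by proving three relations among the residue sets $\mathrm{Res}(\hat A(i)),\mathrm{Res}(\hat B(i)),\mathrm{Res}(\hat C(i)),\mathrm{Res}(D(i))$ via Proposition~\ref{prop-same-residue} and Proposition~\ref{prop-gamma/rho}: the residues above $B(i)$ are absorbed into $\mathrm{Res}(D(i))$, those above $A(i)$ into the rest, and $\mathrm{Res}(\hat C(i))$ is disjoint from $\mathrm{Res}(D(i))$. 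The point is that contributions from higher rows are accounted for through $\hat C(i)$, not through any containment $R(i+1)\subseteq R(i)$; you will need to engage with that two-dimensional bookkeeping rather than reduce to adjacent rows.
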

\begin{proof}
	To prove \eqref{eq-topcell-difference}, we need to analyze the relation between the $k+1$-residues of the top cells in row $i$ and those of the top cells above row $i$. To this end, let us first introduce some notations.
	Assume that we slide the $i$-th row of $\lambda$ to right by $t_i$ boxes in $\mathfrak{c}(\lambda)$. 
	Keep in mind that we must have $t_{\ell}=0$ and $t_i\ge t_{i+1}$ for each $1\le i\le \ell-1$ according to the construction of $\mathfrak{c}(\lambda)$.
	Fixing an integer $1\le i\le \ell$, let us divide the $i$-th row of $\mathfrak{c}(\lambda)$ into four parts:
	\begin{itemize}
		\item $A(i)=\{(i,j) \mid 1\leq j\leq t_{i+1}\}$;
		\item $B(i)=\{(i,j) \mid t_{i+1}+1\leq j\leq t_{i}\}$;
		\item $C(i)=\{(i,j) \mid t_{i}+1\leq j\leq t_{i+1}+\lambda_{i+1}\}$; 
		\item $D(i)=\{(i,j) \mid t_{i+1}+\lambda_{i+1}+1\leq j\leq t_{i}+\lambda_{i}\}$;
	\end{itemize}
	where we set $t_{\ell+1}=0$. 
	It is clear that the cells of $C(i)\cup D(i)$ correspond to those in the $i$-th row of the original diagram $\lambda$, and $D(i)$ consists of all top cells in row $i$.
	We further let $\hat{A}(i)$ (respectively, $\hat{B}(i)$ and $\hat{C}(i)$) be the set of all top cells of  $\mathfrak{c}(\lambda)$ which lie above the cells in $A(i)$ (respectively, $B(i)$ and $C(i)$). 
	Given a set $P$ of cells in $\mathfrak{c}(\lambda)$, let ${\rm Res}(P)$ denote the set of distinct $k+1$-residues of the cells in $P$.
	
\begin{figure}[htbp]
	\centering	
	\begin{tikzpicture}[line cap=round,line join=round,>=triangle 45,x=0.5cm,y=0.5cm]
		\definecolor{hs2}{RGB}{175,238,238}
		\definecolor{hs1}{RGB}{204,204,225}
		\definecolor{hs4}{RGB}{175,238,238}
		\definecolor{hs3}{RGB}{192,192,192}
		\definecolor{hs5}{RGB}{188,212,230}
		\fill[line width=2.pt,color=hs1,fill=hs1,fill opacity=0.5] (0.,14.) -- (0.,13.) -- (2.,13.) -- (2.,14.) -- cycle;
		\fill[line width=2.pt,color=hs1,fill=hs1,fill opacity=0.5] (2.,12.) -- (2.,11.) -- (4.,11.) -- (4.,12.) -- cycle;
		\fill[line width=2.pt,color=hs1,fill=hs1,fill opacity=0.5] (4.,1.) -- (4.,0.) -- (0.,0.) -- (0.,1.) -- cycle;
		\fill[line width=2.pt,color=hs2,fill=hs2,fill opacity=0.5] (4.,9.) -- (4.,8.) -- (7.,8.) -- (7.,9.) -- cycle;
		\fill[line width=2.pt,color=hs2,fill=hs2,fill opacity=0.5] (7.,7.) -- (7.,6.) -- (10.,6.) -- (10.,7.) -- cycle;
		\fill[line width=2.pt,color=hs2,fill=hs2,fill opacity=0.5] (4.,1.) -- (4.,0.) -- (10.,0.) -- (10.,1.) -- cycle;
		\fill[line width=2.pt,color=hs3,fill=hs3,fill opacity=0.5] (15.,2.) -- (15.,1.) -- (18.,1.) -- (18.,2.) -- cycle;
		\fill[line width=2.pt,color=hs3,fill=hs3,fill opacity=0.5] (13.,3.) -- (13.,2.) -- (15.,2.) -- (15.,3.) -- cycle;
		\fill[line width=2.pt,color=hs3,fill=hs3,fill opacity=0.5] (10.,5.) -- (10.,4.) -- (13.,4.) -- (13.,5.) -- cycle;	
		\fill[line width=2.pt,color=hs3,fill=hs3,fill opacity=0.5] (10.,1.) -- (10.,0.) -- (18.,0.) -- (18.,1.) -- cycle;
		\fill[line width=2.pt,color=hs4,fill=hs4,fill opacity=0.5] (18.,1.) -- (18.,0.) -- (26.,0.) -- (26.,1.) -- cycle;
		
		\fill[line width=2.pt,color=hs3,fill=hs3,fill opacity=0.5] (2.,1.) -- (2.,2.) -- (4.,2.) -- (4.,1.) -- cycle;
		\fill[line width=2.pt,color=hs5,fill=hs5,fill opacity=0.5] (4.,1.) -- (4.,2.) -- (15.,2.) -- (15.,1.) -- cycle;

		\draw [line width=0.8pt, color=blue] (10.,1.)-- (10.,0.);
		\draw [line width=0.8pt, color=blue] (10.,1.)-- (26.,1.);
		\draw [line width=0.8pt, color=blue] (26.,0.)-- (10.,0.);
		\draw [line width=0.8pt, color=blue] (26.,1.)-- (26.,0.);
		\draw [line width=0.4pt, color=blue] (18.,1.)-- (18.,0.);
		\draw [line width=0.4pt, color=blue] (19.,1.)-- (19.,0.);
		\draw [line width=0.4pt, color=blue] (25.,1.)-- (25.,0.);
		
		\draw [line width=0.4pt,color=red] (4.,2.)-- (18.,2.);
		\draw [line width=0.4pt,color=red] (18.,2.)-- (18.,1.);
		\draw [line width=0.4pt,color=red] (4.,2.)-- (4.,1.);
		\draw [line width=0.4pt,color=red] (4.,1.)-- (10.,1.);
		\draw [line width=0.4pt] (17.,2.)-- (17.,1.);
		
		\draw [line width=0.4pt] (2.,3.)-- (2.,2.);
		\draw [line width=0.4pt] (2.,2.)-- (4.,2.);
		\draw [line width=0.4pt] (15.,3.)-- (15.,2.);
		\draw [line width=0.4pt] (13.,3.)-- (15.,3.);
		\draw [line width=0.4pt] (13.,5.)-- (13.,3.);
		\draw [line width=0.4pt] (10.,4.)-- (11.,4.);
		\draw [line width=0.4pt] (11.,4.)-- (11.,5.);
		\draw [line width=0.4pt] (10.,5.)-- (10.,4.);
		\draw [line width=0.4pt] (10.,5.)-- (13.,5.);
		\draw [line width=0.4pt] (10.,7.)-- (10.,5.);
		\draw [line width=0.4pt] (7.,7.)-- (10.,7.);
		\draw [line width=0.4pt] (4.,9.)-- (7.,9.);
		\draw [line width=0.4pt] (7.,9.)-- (7.,7.);
		\draw [line width=0.4pt] (4.,9.)-- (4.,12.);
		\draw [line width=0.4pt] (4.,12.)-- (2.,12.);
		\draw [line width=0.4pt] (2.,14.)-- (2.,12.);
		\draw [line width=0.4pt] (0.,14.)-- (2.,14.);
		\draw [line width=0.4pt] (2.,3.)-- (13.,3.);
		\draw [line width=0.4pt,dotted] (4.,12.)-- (4.,14.);
		\draw [line width=0.4pt,dotted] (10.,6.)-- (10.,9.);
		\draw [line width=0.4pt,dotted] (18.,2.)-- (18.,5.);
		\draw [line width=0.4pt,dashed] (0.,0.)-- (0.,14.);
		
		\node at (18.5,0.5){$d_{1}$};
		\node at (25.5,0.5){$d_{2}$};
		\node at (10.5,4.5){$c_{1}$};
		\node at (17.5,1.5){$c_{2}$};
		\node at (-1,0.5){\color{blue}$i$};
		\node at (-1,1.5){\color{red}$i+1$};		
		\node at (-1,2.5){$i+2$};
		\node at (-1,13.5){$\ell$};	
		
		\draw[decoration={brace,mirror}, decorate, thick] (0,-0.2)--(4,-0.2);
		\node at (2, -1)[font=\footnotesize]{$A(i)$};
		\draw[decoration={brace,mirror}, decorate, thick] (4,-0.2)--(10,-0.2);
		\node at (7,-1)[font=\footnotesize]{$B(i)$};
		\draw[decoration={brace,mirror}, decorate, thick] (10,-0.2)--(18,-0.2);
		\node at (14, -1)[font=\footnotesize]{$C(i)$};
		\draw[decoration={brace,mirror}, decorate, thick] (18,-0.2)--(26,-0.2);
		\node at (22, -1)[font=\footnotesize]{$D(i)$};

		\draw[decoration={brace}, decorate, thick] (0,14.1)--(4,14.1);
		\node at (2, 14.6)[font=\footnotesize]{$\hat{A}(i)$};
		\draw[decoration={brace}, decorate, thick] (4,9.1)--(10,9.1);
		\node at (7, 9.6)[font=\footnotesize]{$\hat{B}(i)$};
		\draw[decoration={brace}, decorate, thick] (10,5.1)--(18,5.1);
		\node at (14.5, 5.6)[font=\footnotesize]{$\hat{C}(i)$};	
		
		\draw[decoration={brace,mirror}, decorate, thick] (0,0.8)--(2,0.8);
		\node at (1, 0.4)[font=\tiny]{$A(i+1)$};
		\draw[decoration={brace,mirror}, decorate, thick] (2,0.8)--(4,0.8);
		\node at (3, 0.4)[font=\tiny]{$B(i+1)$};
		\draw[decoration={brace,mirror}, decorate, thick] (4,0.8)--(15,0.8);
		\node at (9.5, 0.4)[font=\tiny]{$C(i+1)$};
		\draw[decoration={brace,mirror}, decorate, thick] (15,.8)--(18,.8);
		\node at (16.5, .4)[font=\tiny]{$D(i+1)$};
	\end{tikzpicture}
	\vspace{-0.6em}
	\caption{{The sets $A,\,B,\,C,\, D$ and cells $b_i,\,c_i,\,d_i$ for $i=1,\,2$ of shape $\mathfrak{c}(\lambda)$.} }\label{fig-pr-lem--enough-residue}
	\vspace{-1em}
\end{figure}
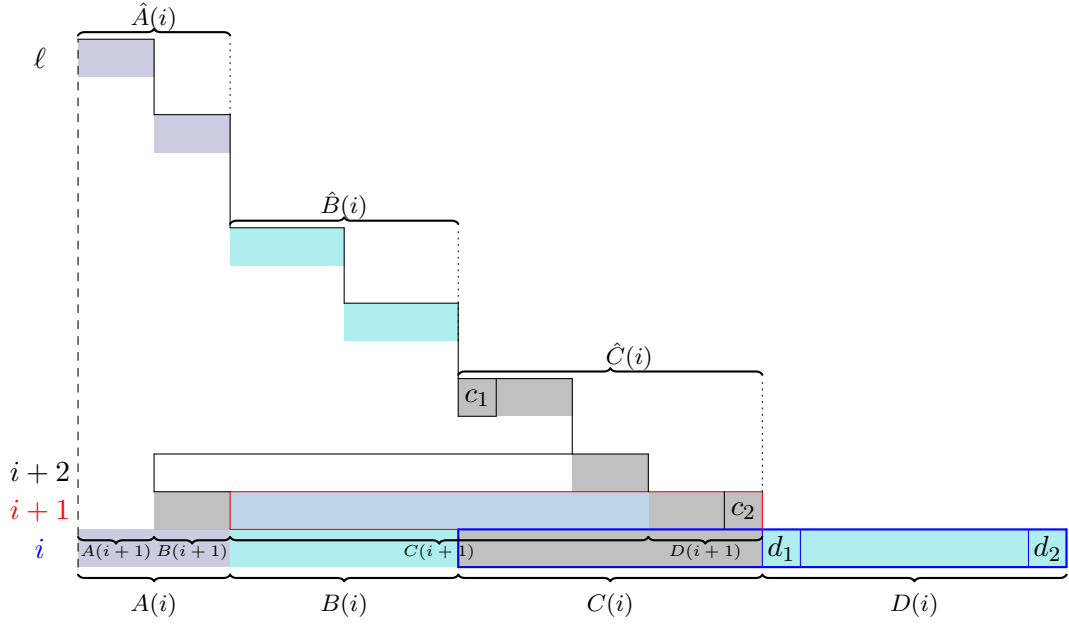
	
	Since $|D(i)|\leq \lambda_i<k+1$, all of its cells have distinct $k+1$-residues, and hence 
	$|D(i)|=|{\rm Res}(D(i))|$. 
	By the construction of $\mathfrak{c}(\lambda)$, for any two cells $c,c'\in\hat{C}(i)$, we have $h_{\mathfrak{c}(\lambda)}(c,c')<k+1$. From Proposition \ref{prop-same-residue} it follows that all cells of $\hat{C}(i)$ have distinct $k+1$-residues, and hence $|\hat{C}(i)|=|{\rm Res}(\hat{C}(i))|$. 
	Thus, 
	\begin{align}\label{eq-lambda}
		\lambda_{i}=|{C}(i)|+|D(i)| =|\hat{C}(i)|+|D(i)|=|{\rm Res}(\hat{C}(i))|+|{\rm Res}(D(i))|.
	\end{align}
	Note that 
	\begin{align}\label{eq-residue}
		|\cup_{j=i}^{\ell}R(j)|= |{\rm Res}(\hat{A}(i))\cup {\rm Res}(\hat{B}(i)) \cup {\rm Res}(\hat{C}(i))\cup {\rm Res}(D(i))|.
	\end{align}
	By \eqref{eq-lambda} and \eqref{eq-residue}, we see that \eqref{eq-topcell-difference} is implied by the following claim.
	
	Claim: For any $1\leq i\leq {\ell}$, we have
	\begin{itemize}
		\item[(I)] ${\rm Res}(\hat{A}(i))\subseteq {\rm Res}(\hat{B}(i))\cup {\rm Res}(\hat{C}(i))$;
		\item[(II)] ${\rm Res}(\hat{C}(i))\cap {\rm Res}(D(i))=\emptyset$;
		\item[(III)] ${\rm Res}(\hat{B}(i))\subseteq {\rm Res}(D(i))$.
	\end{itemize}
	
	To prove (I) we use induction on $\ell-i$. 
	Since $\lambda_{\ell}\leq \lambda_1\leq k$, we have $t_{\ell}=0$, which implies that $|D(\ell)|=\lambda_{\ell}$ and $\hat{A}({\ell})=\hat{B}({\ell})=\hat{C}({\ell})=\emptyset$. Thus, (I) holds for $i=\ell$. 
	Now, assume that (I) holds for $i+1$ and consider the case of $i$.  Let us first note that
	\begin{align}
		\hat{A}(i)&\supseteq \hat{A}(i+1),\label{eq-aa}\\
		\hat{A}(i)&= \hat{A}(i+1)\uplus \hat{B}(i+1), \label{eq-aab}\\ 		
		\hat{B}(i)\uplus \hat{C}(i) &= \hat{C}(i+1)\uplus {D}(i+1),\label{eq-bccd}
	\end{align}
	and these relations are evident from the definitions of $\hat{A}(i),\hat{B}(i),\hat{C}(i)$ and $D(i)$, as indicated in Figure \ref{fig-pr-lem--enough-residue}.
	By \eqref{eq-aab}, \eqref{eq-bccd} and the induction hypothesis, we have
	\begin{align*}
		\hat{A}(i)=\hat{A}(i+1)\uplus \hat{B}(i+1) \subseteq \hat{C}(i+1)\uplus D(i+1) = \hat{B}(i)\cup \hat{C}(i),
	\end{align*}
	i.e., (I) holds for $i$, and the proof of (I) is complete.
	
	We proceed to prove (II) and (III).
	For (II) we may assume that neither $\hat{C}(i)$  nor $D(i)$ is empty. 
	For (III) we may assume that $\hat{B}(i)$ is not empty. In fact, we can also assume that 
	$D(i)$ is not empty since if $D(i)=\emptyset$ then  $\hat{B}(i)=\emptyset$ by their definitions. 
	From now on, we may assume that neither of $\hat{B}(i),\,\hat{C}(i)$ and $D(i)$ is empty.
	
	To prove (II) and (III), we introduce a few more notations. We label the leftmost cell in $D(i)$ as $d_1$, and the rightmost cell in $D(i)$ as $d_2$. Similarly, we label the leftmost cell in $\hat{C}(i)$ as $c_1$, and the rightmost cell in $\hat{C}(i)$ as $c_2$. For the relative positions of $c_1,c_2,d_1$ and $d_2$, see Figure \ref{fig-pr-lem--enough-residue}.
	
	Now we can prove (II). In view of the fact that $h_{\mathfrak{c}(\lambda)}(c_1,d_2)\leq k<k+2$, for any $p,q\in \hat{C}(i)\cup D(i)$ we have $h_{\mathfrak{c}(\lambda)}(p,q)<k+2$.
	By Proposition \ref{prop-same-residue}, this means that ${\rm Res}(\hat{C}(i))\cap {\rm Res}(D(i))=\emptyset$, as desired.
	
	Finally, we prove (III). By Proposition \ref{prop-gamma/rho}, it is clear that for any $p\in \hat{B}(i)$ we have $h_{\mathfrak{c}(\lambda)}(p,d_2)\geq k+2$ and  $h_{\mathfrak{c}(\lambda)}(p,d_1)=h_{\mathfrak{c}(\lambda)}(p,c_2)+2\leq k+2$, where the equality follows from the relative position of $d_1$ and $c_2$. Thus, there exists some $q\in D(i-1)$ such that $h_{\mathfrak{c}(\lambda)}(p,q)=k+2$.
	By Proposition \ref{prop-same-residue} again, we find that the two cells $p$ and $q$ have the same $k+1$-residue. This implies that ${\rm Res}(\hat{B}(i))\subseteq {\rm Res}(D(i))$. 
	
	This completes the proof of the claim and hence that of the proposition. 
\end{proof}	

Now we are almost ready to prove the existence of a $k$-SSYT of shape $\mathfrak{c}(\lambda)$ and $k$-weight $\mu$ for any pair of $k$-bounded partitions $\lambda=(\lambda_1,\ldots,\lambda_\ell)$ and $\mu=(\mu_1,\ldots,\mu_{\iota})$ satisfying $\mu \trianglelefteq \lambda$. The fundamental principle to generate such a $k$-SSYT is to place the largest numbers as top and right as possible.
In some sense our construction is inspired by Fayers' construction of an SSYT $T$ of shape $\lambda$ and weight $\mu$ \cite{FAY2019}, which we recall below. 
We use the symbol $j$ to denote the maximal row index that satisfies $\lambda_j\ge \mu_{\iota}$.
To construct $T$, first fill the top cells of the first $\mu_{\iota}$ columns with $\iota$'s in the Young diagram of $\lambda$, and then slide these $\iota$'s to the end of their respective rows. More precisely, for each $i>j$  the number of $\iota$'s assigned to the end of row $i$ is $\lambda_i-\lambda_{i+1}$ (set $\lambda_{\ell +1}=0$), and the number of $\iota$'s assigned to the end of row $j$ is $\mu_{\iota}-\lambda_{j+1}$. It is important to highlight that these occurrences of $\iota$'s are exactly positioned in the top cells of each row. 
Now let $\hat{\lambda}$ denote the partition obtained from $\lambda$ by removing the cells filled by $\iota$'s,
and let $\hat{\mu}=(\mu_1,\ldots,\mu_{\iota-1})$. Fayers noted that $\hat{\mu} \trianglelefteq \hat{\lambda}$.
Then fill $\mu_{\iota-1}$ top cells of $\hat{\lambda}$ with $\iota-1$'s in the same manner. Iterating the above process will eventually produce an SSYT $T$ of shape $\lambda$ and weight $\mu$. (Note that we need not to assume that $\mu$ and $\lambda$ are $k$-bounded for Fayers' construction.)

 For an illustration of Fayers' construction, see the following example. 

\begin{exam}\label{example-SSYT-44421}
	Let $\lambda=(4,4,4,2,1)$, $\mu=(3,3,3,3,3)$. The following figure describes the procedure to generate an SSYT $T$ of shape $\lambda$ and weight $\mu$ according to Fayers' construction:
	\begin{figure}[H]
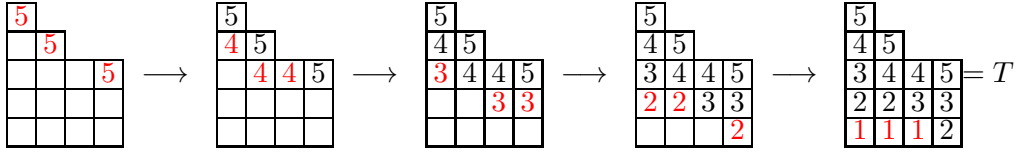

		\centering
		\ytableausetup{centertableaux, boxsize=0.95em}
		\ytableaushort{{\textcolor{red}{5}},\none {\textcolor{red}{5}},\none \none \none {\textcolor{red}{5}} ,\none, \none}* {1,2,4,4,4}
		$~\longrightarrow~$
		\ytableaushort{5,{\textcolor{red}{4}}5,\none{\textcolor{red}{4}} {\textcolor{red}{4}} 5,\none, \none}* {1,2,4,4,4}
		$~\longrightarrow~$
		\ytableaushort{5,45,{\textcolor{red}{3}}445,\none\none{\textcolor{red}{3}}{\textcolor{red}{3}},\none}* {1,2,4,4,4}
		$~\longrightarrow~$
		\ytableaushort{5,45,3445,{\textcolor{red}{2}}{\textcolor{red}{2}}33,\none\none\none{\textcolor{red}{2}}}* {1,2,4,4,4}
		$~\longrightarrow~$
		\ytableaushort{5,45,3445,2233,{\textcolor{red}{1}}{\textcolor{red}{1}}{\textcolor{red}{1}}2}* {1,2,4,4,4}
		$=T$
		\caption{ The construction of the SSYT $T$.}
		\vspace{-1.5em}
	\end{figure}
\end{exam}

To generate a $k$-SSYT of shape $\mathfrak{c}(\lambda)$ and $k$-weight $\mu$ for a pair of $k$-bounded partitions $\lambda=(\lambda_1,\ldots,\lambda_\ell)$ and $\mu=(\mu_1,\ldots,\mu_{\iota})$ satisfying $\mu \trianglelefteq \lambda$, the basic operation of our algorithm consists of the row filling of $\iota$'s into the Young diagram $\mathfrak{c}(\lambda)$, denoted by $\mathfrak{c}(\lambda)\leftarrow \{\ell(\mu)\}$. (Note that $\ell(\mu)=\iota$.)
The operation $\mathfrak{c}(\lambda)\leftarrow \{\ell(\mu)\}$ consists of the following steps: 
\begin{itemize}
	\item[\bf{R1:}] Find the maximal row index $j$ such that $\lambda_j\ge \mu_{\iota}$. (Such a $j$ always exists since $\mu \trianglelefteq \lambda$.) 
	
	\item[\bf{R2:}] If $j=\ell$, simply mark the rightmost $\mu_{\iota}$ top cells of the $\ell$-th row of $\mathfrak{c}(\lambda)$ with bold boundaries, place $\iota$'s in these marked cells, and then go to Step {\bf{R3}}. (It is clear that all these top cells occupied by $\iota$ have different $k+1$-residues.)
	On the other hand, if $j<\ell$, perform the filling process from row $\ell$ to row $j$ in the following way (from top to bottom):
	 
	\begin{itemize}
		\item[{\bf{R2-1:}}] Begin with row $\ell$, mark all the top cells of the $\ell$-th row of $\mathfrak{c}(\lambda)$  with bold boundaries, and then place $\iota$'s in these marked cells.
		
		\item[{\bf{R2-2:}}] If $i>j$ and all rows above row $i$ have been  performed the filling operation, mark the rightmost $\lambda_{i}-\lambda_{i+1}$ top cells of the $i$-th row of $\mathfrak{c}(\lambda)$  with bold boundaries such that these cells have different $k+1$-residues with those cells already filled with $\iota$'s. (This is possible by Proposition \ref{lem-enough-residue}.) Let $c_i$ denote the leftmost cell of these marked cells. Then place $\iota$'s in $c_i$ and all the cells to the right of $c_i$ in the $i$-th row.

		\item[{\bf{R2-3:}}] For row $j$ mark the rightmost $\mu_{\iota}-\lambda_{j+1}$ top cells of the $j$-th row of $\mathfrak{c}(\lambda)$  with bold boundaries such that these cells have different $k+1$-residues with those cells already filled with $\iota$'s. (Again by Proposition \ref{lem-enough-residue} this is possible.) Similarly, let $c_j$ denote the leftmost cell of these $\mu_{\iota}-\lambda_{j+1}$ top cells. Then place $\iota$'s in $c_j$ and all the cells to the right of $c_j$ in the $j$-th row.  
	\end{itemize}

	\item[{\bf{R3:}}] Find the rightmost removable corner $c$ filled with $\iota$ among all rows above row $j-1$ in $\mathfrak{c}(\lambda)$. If $c$ has $k+1$-residue $y$, place $\iota$'s in all removable corners of $\mathfrak{c}(\lambda)$ with $k+1$-residue $y$. Shade all removable corners with $k+1$-residue $y$.  
	
	\item[{\bf{R4:}}] Suppose that $r$ is the highest row of $\mathfrak{c}(\lambda)$ containing a removable corner with $k+1$-residue $y$ and let $\lambda-e_r$ be the partition defined as in Proposition \ref{prop-realtion-partition-core}. (Deleting all shaded removable corners from $\mathfrak{c}(\lambda)$ will lead to a diagram of $\mathfrak{c}(\lambda-e_r)$.)	
	If there exists some marked cell above row $j-1$ in $\mathfrak{c}(\lambda-e_r)$, replace $\lambda$ with $\lambda-e_r$ 
	and then go to Step {\bf{R3}}.  Otherwise, the filling process stops.
\end{itemize}

\begin{rem}\label{rem-hr}
All cells filled with $\iota$'s in $\mathfrak{c}(\lambda)$ form a horizontal strip. Moreover, these cells have exactly $\mu_\iota$ distinct $k+1$-residues. 
\end{rem}

We will now provide an example to illustrate the above filling process. 

\begin{exam}
	Let $\lambda=(4,4,4,2,1)$, $\mu=(3,3,3,3,3)$, and $k=5$. The filling operation   $\mathfrak{c}(\lambda)\leftarrow \{5\}$ is carried out as follows:
	\begin{figure}[htbp]
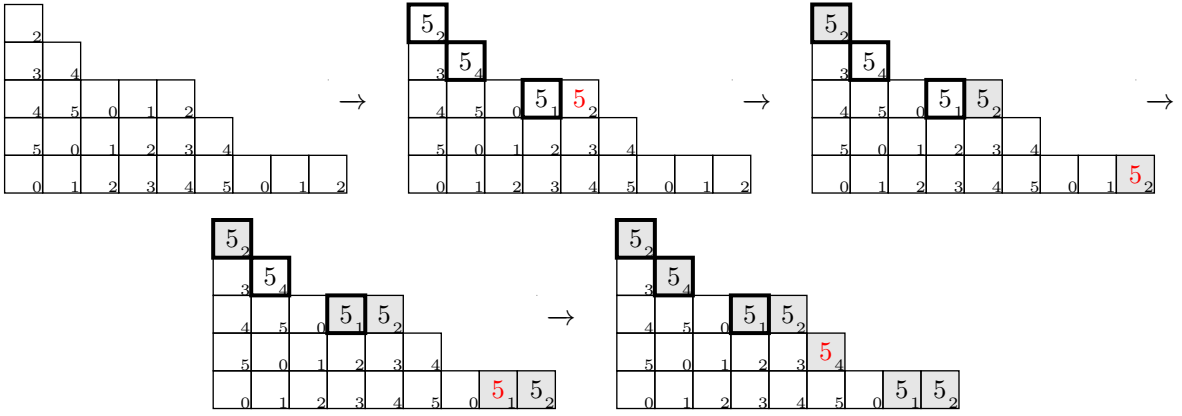

		\centering
		\ktableau{}{{2},{3,4},{4,5,0,1,2},{5,0,1,2,3,4},{0,1,2,3,4,5,0,1,2}}{}{9.5/-2}{$\xrightarrow{~}$}{}
		\ktableau{}{{2},{3,4},{4,5,0,1,2},{5,0,1,2,3,4},{0,1,2,3,4,5,0,1,2}}{{5},{~,5},{~,~,~,5,\textcolor{red}{5}},{~,~,~,~,~,~},{~,~,~,~,~,~,~,~,~}}{9.5/-2}{$\xrightarrow{~}$}{1/-1,2/-2,4/-3}
		\ktableau{1/-1,5/-3,9/-5}{{2},{3,4},{4,5,0,1,2},{5,0,1,2,3,4},{0,1,2,3,4,5,0,1,2}}{{5},{~,5},{~,~,~,5,5},{~,~,~,~,~,~},{~,~,~,~,~,~,~,~,\textcolor{red}{5}}}{9.5/-2}{$\xrightarrow{~}$}{1/-1,2/-2,4/-3}\\[5pt]
		\ktableau{1/-1,5/-3,9/-5,4/-3,8/-5}{{2},{3,4},{4,5,0,1,2},{5,0,1,2,3,4},{0,1,2,3,4,5,0,1,2}}{{5},{~,5},{~,~,~,5,5},{~,~,~,~,~,~},{~,~,~,~,~,~,~,\textcolor{red}{5},5}}{9.5/-2}{$\xrightarrow{~}$}{1/-1,2/-2,4/-3}
		\ktableau{1/-1,5/-3,9/-5,4/-3,8/-5,2/-2,6/-4}{{2},{3,4},{4,5,0,1,2},{5,0,1,2,3,4},{0,1,2,3,4,5,0,1,2}}{{5},{~,5},{~,~,~,5,5},{~,~,~,~,~,\textcolor{red}{5}},{~,~,~,~,~,~,~,5,5}}{}{}{1/-1,2/-2,4/-3}
		\caption{ The filling operation   $\mathfrak{c}(\lambda)\leftarrow \{5\}$.}
		\vspace{-1em}
	\end{figure}
\end{exam}

There is a close relation between 
our filling operation and Fayers' construction. In fact, we have the following property, which is of great use in the construction of a $k$-SSYT of shape $\lambda$ and $k$-weight $\mu$. 

\begin{prop}\label{lem-delete-l}
Given a pair of $k$-bounded partitions $\lambda=(\lambda_1,\ldots,\lambda_{\ell})$ and $\mu=(\mu_1,\ldots,\mu_{\iota})$ with $\mu \trianglelefteq \lambda$, let $T^{\{k\}}_{\iota}$ denote the resulting partial k-SSYT of the row filling $\mathfrak{c}(\lambda)\leftarrow \{\ell(\mu)\}$, and let $T$ be the resulting SSYT of Fayers' construction. 
Then the diagram obtained from $T^{\{k\}}_{\iota}$ by removing all $\iota$'s is of shape $\mathfrak{c}(\hat{\lambda})$, where $\hat{\lambda}$ is the shape of the tableau obtained from $T$ by removing all $\iota$'s.
\end{prop}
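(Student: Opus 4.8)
The plan is to read off $\hat\lambda$ from Fayers' recipe and then to match the row filling $\mathfrak c(\lambda)\leftarrow\{\iota\}$ against it, using Proposition~\ref{prop-realtion-partition-core} as the dictionary that converts a one-box decrement of a $k$-bounded partition into an application of some $s_i$ to the associated $k+1$-core.

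First I would make Fayers' construction explicit: after the $\iota$'s have been slid to the ends of their rows, row $i$ loses its last $\lambda_i-\lambda_{i+1}$ cells for each $j<i\le\ell$, and row $j$ loses its last $\mu_\iota-\lambda_{j+1}$ cells, so that with the convention $\lambda_{\ell+1}=0$,
\[\hat\lambda_i=\begin{cases}\lambda_i,& i<j,\\ \lambda_j+\lambda_{j+1}-\mu_\iota,& i=j,\\ \lambda_{i+1},& j<i\le\ell,\end{cases}\]
which is again $k$-bounded with $|\hat\lambda|=|\lambda|-\mu_\iota$. Hence $\hat\lambda$ is obtained from $\lambda$ by a chain of $\mu_\iota$ one-box removals: row $\ell$ is decremented $\lambda_\ell$ times, row $i$ is decremented $\lambda_i-\lambda_{i+1}$ times for $j<i<\ell$, and row $j$ is decremented $\mu_\iota-\lambda_{j+1}$ times.

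The technical heart is the analysis of the loop \textbf{R3}--\textbf{R4}. Write $\nu^{(0)}=\lambda$ and let $\nu^{(t)}$ label the state after the $t$-th pass. By Proposition~\ref{prop-realtion-partition-core}, a pass selecting residue $y_t$ in \textbf{R3} deletes from $\mathfrak c(\nu^{(t-1)})$ exactly the removable corners of residue $y_t$ — that is, $s_{y_t}(\mathfrak c(\nu^{(t-1)}))=\mathfrak c(\nu^{(t)})$ with $\nu^{(t)}=\nu^{(t-1)}-e_{r_t}$ for the appropriate row $r_t$ — and every cell so deleted is given the value $\iota$. Two things then have to be shown. \emph{(a)} The multiset $\{r_1,r_2,\dots\}$ of rows decremented by the loop equals the multiset of rows decremented by Fayers, and the loop runs for exactly $\mu_\iota$ passes, stopping (by the test in \textbf{R4}, ``no marked cell survives above row $j-1$'') precisely when $\nu^{(t)}=\hat\lambda$; here one exploits that the $\mu_\iota$ cells marked in \textbf{R2} have pairwise distinct $k+1$-residues. \emph{(b)} Every cell of $\mathfrak c(\lambda)$ filled with $\iota$ in \textbf{R2} — the marked cells together with the interior cells lying to their right in the same rows — is among the cells deleted over the course of the loop; equivalently, such an interior cell reappears as a removable corner of the residue being handled exactly at the pass that decrements its row. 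That \textbf{R2} can mark $\mu_\iota$ top cells with pairwise distinct residues, distinct also from the interior cells already filled, is exactly Proposition~\ref{lem-enough-residue}; and identifying the residue named by the greedy rule of \textbf{R3} at each pass, as well as proving \emph{(a)} and \emph{(b)}, rests on Propositions~\ref{prop-same-residue} and \ref{prop-gamma/rho} together with the four-block decomposition $A(i),B(i),C(i),D(i)$ of the rows of $\mathfrak c(\lambda)$ from the proof of Proposition~\ref{lem-enough-residue}, which is precisely what controls how residues recur between a row and the rows above it.

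Granting \emph{(a)} and \emph{(b)}, the cells of $\mathfrak c(\lambda)$ carrying $\iota$ in $T^{\{k\}}_\iota$ are exactly the cells deleted across the $\mu_\iota$ passes, namely $\mathfrak c(\lambda)\setminus\mathfrak c(\hat\lambda)$; removing all $\iota$'s from $T^{\{k\}}_\iota$ therefore leaves precisely $\mathfrak c(\hat\lambda)$, which is the assertion. The main obstacle is \emph{(a)}--\emph{(b)}: showing that the greedy choice ``rightmost $\iota$-filled removable corner above row $j-1$'' in \textbf{R3} always names the residue the chain $\lambda\to\hat\lambda$ requires next, and that the interior cells placed in \textbf{R2} resurface as removable corners at exactly the right passes. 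This is a careful but finite residue-bookkeeping argument built on Propositions~\ref{prop-same-residue}, \ref{prop-gamma/rho} and \ref{lem-enough-residue}.
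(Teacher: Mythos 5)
Your architecture is the same as the paper's: read $\hat\lambda$ off Fayers' recipe, peel the $\iota$-filled cells off one residue class at a time, use Proposition~\ref{prop-realtion-partition-core} to turn each deletion into $\mathfrak{c}(\nu)\mapsto\mathfrak{c}(\nu-e_r)$, and match the decremented rows against the marked cells ($\lambda_i-\lambda_{i+1}$ of them in row $i>j$, $\mu_\iota-\lambda_{j+1}$ in row $j$). But the proposal stops exactly where the proof has to do its work: your items \emph{(a)} and \emph{(b)} are precisely the content of the proposition, and you offer for them only the promise of ``a careful but finite residue-bookkeeping argument.'' The crux you leave unproved is the claim that, at the pass handling residue $y$, \emph{every} already-filled top cell of residue $y$ in the current core is a removable corner. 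Without this, deleting the removable corners of residue $y$ (which is all that Proposition~\ref{prop-realtion-partition-core} controls) need not remove all $\iota$'s of that residue, and the ``equivalently'' clause in your \emph{(b)} is exactly the missing statement, not a reformulation of something known. The paper proves this claim by a short but genuine core argument: if a filled top cell $c'$ of residue $y$ above the chosen corner $c$ had a cell $c''$ to its right, then by Proposition~\ref{prop-same-residue} one may assume $h_{\mathfrak{c}(\lambda)}(c',c)=k+2$, whence $h_{\mathfrak{c}(\lambda)}(c'',c)=k+1$, contradicting that $\mathfrak{c}(\lambda)$ is a $k+1$-core. Nothing in your sketch plays this role, and citing Propositions~\ref{prop-same-residue}, \ref{prop-gamma/rho} and the $A,B,C,D$ decomposition does not substitute for it.

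The same applies to \emph{(a)}: to convert ``one marked cell is removed per pass'' into ``the multiset of decremented rows agrees with Fayers','' you must identify the row $r$ of Proposition~\ref{prop-realtion-partition-core} (the highest row containing a removable corner of residue $y$) with the row of the unique marked cell of residue $y$, i.e.\ show that the topmost removable corner of residue $y$ \emph{is} that marked cell. This again rests on the claim above together with the freshness of residues chosen in Step \textbf{R2}, and it is asserted in your outline rather than proved. So the route is the right one and coincides with the paper's, but as written the argument defers the two statements that constitute the proof; it is an outline with a genuine gap rather than a complete proof.
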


\begin{proof}
	According to the filling rule in $\mathfrak{c}(\lambda)\leftarrow \{\ell(\mu)\}$, all cells filled with $\iota$'s in $T_{\iota}^{\{k\}}$ can be grouped into $\mu_{\iota}$ sets according to their $k+1$-residues.     
Then removing all $\iota$'s from $T_{\iota}^{\{k\}}$ can be done by successively deleting these sets in the order they are shaded in Step R4, say these ordered sets are $A_1,A_2,\ldots,A_{\mu_{\iota}}$.  

Let $j$ denote the maximal index 
such that $\lambda_j\geq \mu_{\iota}$.
	We first claim that, in Step R4 of the row filling $\mathfrak{c}(\lambda)\leftarrow \{\ell(\mu)\}$, when we find the rightmost removable corner $c$ with $k+1$-residue $y$ among all rows above row $j-1$ in $\mathfrak{c}(\lambda)$, all filled top cells with $k+1$-residue $y$ must be removable corners.  
On the one hand, the filled cells with $k+1$-residue $y$ below $c$ clearly are removable corners by the filling process. On the other hand, suppose there is a filled top cell $c^{\prime}$ with $k+1$-residue $y$ above $c$ such that there is a cell $c^{\prime\prime}$ to the right of $c^{\prime}$. According to Proposition \ref{prop-same-residue}, without loss of generality, let $h_{\mathfrak{c}(\lambda)}{(c^{\prime},c)}=k+2$. Then $h_{\mathfrak{c}(\lambda)}{(c^{\prime\prime},c)}=k+1$, which contradicts the fact that $\mathfrak{c}(\lambda)$ is a $k+1$-core. This completes the proof of the claim. 

Let $\bar{c}$ denote the topmost removable corner with $k+1$-residue $y$ in Step R4. 
If $\bar{c}$ lies in row $r$, then, by Proposition \ref{prop-realtion-partition-core} and the above claim, deleting all shaded cells with $k+1$-residue $y$ in $T_{\iota}^{\{k\}}$ yields a partial SSYT of shape $\mathfrak{c}(\lambda-e_r)$.
By Step R2, we know that $\bar{c}$ is the unique marked cell with $k+1$-residue $y$. 

Based on the above arguments, only one marked cell is removed when we delete some $A_i$ from the diagram each time.  
Note that in the row filling $\mathfrak{c}(\lambda)\leftarrow \{\ell(\mu)\}$
all marked cells have distinct $k+1$-residues. Moreover, there are $\lambda_i-\lambda_{i+1}$ marked cells in row $i$ for each $i>j$ and $\mu_{\iota}-\lambda_{j+1}$ marked cells in row $j$. 
By Fayers' construction $\hat{\lambda}$ is just the partition obtained from $\lambda$ by decreasing the $i$-th part of $\lambda$ by $\lambda_i-\lambda_{i+1}$ for each $i>j$ and decreasing the $j$-th part of $\lambda$ by $\mu_{\iota}-\lambda_{j+1}$. 
Thus, deleting all $A_i$'s from $T^{\{k\}}_{\iota}$ will eventually lead to a diagram of $\mathfrak{c}(\hat{\lambda})$. 
The proof is complete. 	
\end{proof}

The first main result of this section is as follows, which can be considered as a refinement of Proposition \ref{thm-Kostka-dominance}.

\begin{thm}\label{thm-k-Kostka-number}
	Let $\lambda=(\lambda_1,\ldots,\lambda_\ell),\,\mu=(\mu_1,\ldots,\mu_{\iota})\in {\rm Par}^k(d)$. Then $K_{\lambda,\mu}^{(k)}\neq 0$  if and only if  $\mu\trianglelefteq \lambda$.
\end{thm}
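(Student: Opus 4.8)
The plan is to prove the nontrivial ``if'' direction: given $k$-bounded partitions $\lambda=(\lambda_1,\ldots,\lambda_\ell)$ and $\mu=(\mu_1,\ldots,\mu_\iota)$ with $\mu\trianglelefteq\lambda$, construct an explicit $k$-SSYT of shape $\mathfrak{c}(\lambda)$ and $k$-weight $\mu$, which shows $K^{(k)}_{\lambda,\mu}\neq 0$; the ``only if'' direction is already Proposition \ref{property-k-Kostka-number-1}. The construction proceeds by induction on $\iota=\ell(\mu)$ (equivalently on $d$), with the single inductive step handled by the row filling operation $\mathfrak{c}(\lambda)\leftarrow\{\ell(\mu)\}$ described above.

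First I would invoke Proposition \ref{lem-delete-l}: performing $\mathfrak{c}(\lambda)\leftarrow\{\ell(\mu)\}$ produces a partial $k$-SSYT $T^{\{k\}}_\iota$ in which the cells filled with $\iota$ form a horizontal strip occupying exactly $\mu_\iota$ distinct $k+1$-residues (Remark \ref{rem-hr}), and removing those $\iota$'s leaves a diagram of shape $\mathfrak{c}(\hat\lambda)$, where $\hat\lambda$ is precisely the shape arising in Fayers' construction after deleting the $\iota$'s. Since Fayers' construction is valid whenever $\hat\mu=(\mu_1,\ldots,\mu_{\iota-1})\trianglelefteq\hat\lambda$, and Fayers observed exactly this domination holds, the pair $(\hat\lambda,\hat\mu)$ again consists of $k$-bounded partitions (note $\hat\lambda\subseteq\lambda$ so $\hat\lambda_1\le k$) with $\hat\mu\trianglelefteq\hat\lambda$. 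By the induction hypothesis there is a $k$-SSYT $S$ of shape $\mathfrak{c}(\hat\lambda)$ and $k$-weight $\hat\mu$. The base case $\iota=0$ is the empty tableau.

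Next I would glue $S$ and the $\iota$-strip together: fill the cells of $\mathfrak{c}(\lambda)$ not occupied by $\iota$'s according to $S$, and keep the $\iota$'s where the row filling placed them. I must verify this filling is a genuine $k$-SSYT: (a) it is semistandard as a plain SSYT — rows weakly increase and columns strictly increase — because the $\iota$'s sit in a horizontal strip of top cells, hence no $\iota$ lies below another cell and appending them to $S$ respects column-strictness; and (b) the letter $\iota$ occupies exactly $\mu_\iota$ distinct $k+1$-residues while each smaller letter $i<\iota$ still occupies exactly $\mu_i$ residues, since those cells and their residues are inherited unchanged from $S$. Thus the glued filling has $k$-weight $(\mu_1,\ldots,\mu_{\iota-1},\mu_\iota)=\mu$, completing the induction.

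The main obstacle is establishing that the row filling operation is well-defined — that Steps R2-2 and R2-3 can actually choose the required numbers of top cells with $k+1$-residues distinct from those already used by $\iota$'s. This is exactly where Proposition \ref{lem-enough-residue} is indispensable: it guarantees $|\cup_{j=i}^\ell R(j)|=\lambda_i$, so that when we have already committed the residues coming from rows above row $i$ (there are $\lambda_{i+1}$ of them in the relevant sense) we still have $\lambda_i-\lambda_{i+1}$ fresh residues available among the top cells of row $i$, and similarly $\mu_\iota-\lambda_{j+1}$ fresh ones in row $j$ since $\mu_\iota\le\lambda_j$. One must also check the bookkeeping in Step R3–R4 (that the rightmost removable corner chosen always carries a residue already used by a marked cell, and that deleting shaded corners yields $\mathfrak{c}(\lambda-e_r)$ via Proposition \ref{prop-realtion-partition-core}), but these are precisely the points already argued in the proof of Proposition \ref{lem-delete-l}. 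So the theorem follows by assembling Propositions \ref{lem-enough-residue}, \ref{lem-delete-l} and the induction above, together with Proposition \ref{property-k-Kostka-number-1} for the converse.
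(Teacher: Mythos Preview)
Your proposal is correct and follows essentially the same approach as the paper: both prove the ``if'' direction by iteratively applying the row filling operation $\mathfrak{c}(\lambda^{(i)})\leftarrow\{\ell(\mu^{(i)})\}$ along the chain of partitions coming from Fayers' construction, invoking Proposition~\ref{lem-enough-residue} for well-definedness, Remark~\ref{rem-hr} for the horizontal-strip and residue-count properties, and Proposition~\ref{lem-delete-l} to identify the residual shape as $\mathfrak{c}(\hat\lambda)$. The only cosmetic difference is that the paper presents the construction as an explicit top-down algorithm $T^{\{k\}}_{\iota+1},T^{\{k\}}_\iota,\ldots,T^{\{k\}}_1$, whereas you phrase the same recursion as an induction on $\iota$.
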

\begin{proof}
	By Proposition \ref 
	{property-k-Kostka-number-1}, it suffices to show there exists a $k$-SSYT of shape $\mathfrak{c}(\lambda)$ and $k$-weight $\mu$ if $\mu\trianglelefteq \lambda$. 
	Such a tableau can be produced based on the above row filling operation and Fayers' construction.
	 
	It is well known that each SSYT of shape $\lambda$ corresponds to a chain of partitions from $\emptyset$ to $\lambda$. 
	As before, let $T$ be the resulting SSYT of shape $\lambda=(\lambda_1,\ldots,\lambda_\ell)$ and weight $\mu=(\mu_1,\ldots,\mu_\iota)$ generated by Fayers' construction. Suppose that $T$ corresponds to the following partition sequence
	\begin{align}\label{eq-chain-of-partitions}
		\emptyset=\lambda^{(0)}\subseteq \lambda^{(1)}\subseteq\cdots\subseteq\lambda^{(\iota-1)}\subseteq\lambda^{(\iota)}=\lambda,
	\end{align}
	namely, all cells of the skew diagram $\lambda^{(i)}/\lambda^{(i-1)}$ are filled with $i$ for all $1\leq i\leq\iota$. For each $1\leq i\leq \iota$ let $\mu^{(i)}=(\mu_1,\ldots,\mu_i)$. Then from Fayers' construction it follows that if $\mu \trianglelefteq \lambda$ then $\mu^{(i)} \trianglelefteq {\lambda}^{(i)}$ for every $1\leq i\leq \iota$.
	
	We associate a $k$-SSYT $T^{\{k\}}$ of shape $\mathfrak{c}({\lambda})$ and $k$-weight $\mu$
	as follows.
	Begin with $T_{\iota+1}^{\{k\}}=\mathfrak{c}(\lambda^{(\iota)})=\mathfrak{c}(\lambda)$, the empty $k$-SSYT of shape $\mathfrak{c}(\lambda)$.  
	If $i>0$ and $T_{i+1}^{\{k\}}$ is defined, then let  
	$T_{i}^{\{k\}}$ be the partial SSYT of shape $\mathfrak{c}(\lambda)$ obtained from $T_{i+1}^{\{k\}}$ by applying the row filling $\mathfrak{c}(\lambda^{(i)})\leftarrow \{\ell(\mu^{(i)})\}$. 
	Moreover, by the proof of Proposition \ref{lem-delete-l}, the empty cells of $T_{i}^{\{k\}}$ form a diagram of shape $\mathfrak{c}(\lambda^{(i-1)})$. 
	The process ends at $T_{1}^{\{k\}}$, and let $T^{\{k\}}=T_{1}^{\{k\}}$. 
	
	We proceed to show that $T^{\{k\}}$ is a $k$-SSYT of shape $\mathfrak{c}(\lambda)$ and $k$-weight $\mu$. Its semistandness is ensured since for each $1\leq i\leq \iota$ all cells filled with $i$'s form a horizontal strip of shape $\mathfrak{c}(\lambda^{(i)})/\mathfrak{c}(\lambda^{(i-1)})$ in $\mathfrak{c}(\lambda)$, as stated in Remark \ref{rem-hr}. 
    Moreover, by Remark \ref{rem-hr}, for each $1\leq i\leq \iota$ the cells filled with $i$ in $T^{\{k\}}$ have exactly $\mu_i$ distinct $k+1$-residues, in view of that these cells are just those filled by the row filling  $\mathfrak{c}(\lambda^{(i)})\leftarrow \{\ell(\mu^{(i)})\}$. This completes the proof. 
\end{proof} 

We use $(\lambda,\mu,k)\rightarrow T^{\{k\}}$ to denote the algorithm for generating $T^{\{k\}}$ from $\lambda$ and $\mu$ in the above proof. 
For an illustration of this algorithm, see the following example.

\begin{exam}
	Let $\lambda=(4,4,4,2,1),\,\mu=(3,3,3,3,3)\in \mathrm{Par}^{5}(15)$. By the algorithm $(\lambda,\mu,5)\rightarrow T^{\{5\}}$, we obtain a $5$-SSYT of shape $\mathfrak{c}(\lambda)$ and $5$-weight $\mu$ based on the partition sequence: $\lambda^{(1)}=(3),\, \lambda^{(2)}=(4,2),\, \lambda^{(3)}=(4,4,1),\, \lambda^{(4)}=(4,4,3,1),\, \lambda^{(5)}=(4,4,4,2,1)  $.
	\begin{figure}[H]
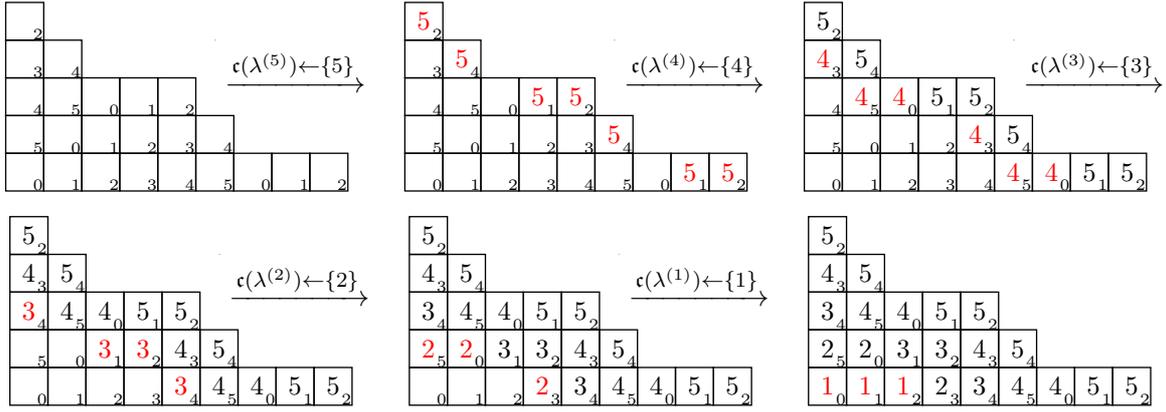

		\centering
		\ktableau{}{{2},{3,4},{4,5,0,1,2},{5,0,1,2,3,4},{0,1,2,3,4,5,0,1,2}}{}{6.5/-1}{$\xrightarrow{\mathfrak{c}(\lambda^{(5)})\leftarrow \{5\}}$}{}
		\ktableau{}{{2},{3,4},{4,5,0,1,2},{5,0,1,2,3,4},{0,1,2,3,4,5,0,1,2}}{{\textcolor{red}{5}},{~,\textcolor{red}{5}},{~,~,~,\textcolor{red}{5},\textcolor{red}{5}},{~,~,~,~,~,\textcolor{red}{5}},{~,~,~,~,~,~,~,\textcolor{red}{5},\textcolor{red}{5}}}{6.5/-1}{$\xrightarrow{\mathfrak{c}(\lambda^{(4)})\leftarrow \{4\}}$}{}
		\ktableau{}{{2},{3,4},{4,5,0,1,2},{5,0,1,2,3,4},{0,1,2,3,4,5,0,1,2}}{{5},{\textcolor{red}{4},5},{~,\textcolor{red}{4},\textcolor{red}{4},5,5},{~,~,~,~,\textcolor{red}{4},5},{~,~,~,~,~,\textcolor{red}{4},\textcolor{red}{4},5,5}}{6.5/-1}{$\xrightarrow{\mathfrak{c}(\lambda^{(3)})\leftarrow \{3\}}$}{}\\[5pt]
		\ktableau{}{{2},{3,4},{4,5,0,1,2},{5,0,1,2,3,4},{0,1,2,3,4,5,0,1,2}}{{5},{4,5},{\textcolor{red}{3},4,4,5,5},{~,~,\textcolor{red}{3},\textcolor{red}{3},4,5},{~,~,~,~,\textcolor{red}{3},4,4,5,5}}{6.5/-1}{$\xrightarrow{\mathfrak{c}(\lambda^{(2)})\leftarrow \{2\}}$}{}
		\ktableau{}{{2},{3,4},{4,5,0,1,2},{5,0,1,2,3,4},{0,1,2,3,4,5,0,1,2}}{{5},{4,5},{3,4,4,5,5},{\textcolor{red}{2},\textcolor{red}{2},3,3,4,5},{~,~,~,\textcolor{red}{2},3,4,4,5,5}}{6.5/-1}{$\xrightarrow{\mathfrak{c}(\lambda^{(1)})\leftarrow \{1\}}$}{}
		\ktableau{}{{2},{3,4},{4,5,0,1,2},{5,0,1,2,3,4},{0,1,2,3,4,5,0,1,2}}{{5},{4,5},{3,4,4,5,5},{2,2,3,3,4,5},{\textcolor{red}{1},\textcolor{red}{1},\textcolor{red}{1},2,3,4,4,5,5}}{}{}{}
		\caption{The construction of the $5$-SSYT of shape $\mathfrak{c}(\lambda)$ and $5$-weight $\mu$.}
		\vspace{-1em}	
	\end{figure} 
\end{exam}

We are now in a position to present the M-convexity of dual $k$-Schur polynomials.
\begin{thm}\label{thm-M-SNP-dual-k-Schur}
Let $\lambda\in \mathrm{Par}^{k}$. Then the dual $k$-Schur polynomial $\mathfrak{S}^{(k)}_{\lambda}$ has SNP and its Newton polytope is a $\lambda$-permutahedron $\mathcal{P}_{\lambda}$.
Equivalently, $\mathfrak{S}^{k}_{\lambda}$ has a M-convex support.
\end{thm}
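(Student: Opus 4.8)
The plan is to deduce the statement from Theorem~\ref{thm-k-Kostka-number} together with Rado's Theorem~\ref{thm-Rado-Schur} in exactly the way Monical--Tokcan--Yong deduce the SNP property of Schur polynomials. First I would observe that, by Proposition~\ref{pop-dual-schur-to-monmial}, the dual $k$-Schur polynomial $\mathfrak{S}^{(k)}_{\lambda}$ (restricted to $n$ variables) has support
\[
{\rm supp}\bigl(\mathfrak{S}^{(k)}_{\lambda}\bigr)=\bigl\{\alpha\in\mathbb{N}^n : p(\alpha)\trianglelefteq\lambda,\ |\alpha|=|\lambda|\bigr\},
\]
using that $K^{(k)}_{\lambda,\alpha}=K^{(k)}_{\lambda,p(\alpha)}$ (Proposition~\ref{property-k-Kostka-number-2}) and that, by Theorem~\ref{thm-k-Kostka-number}, $K^{(k)}_{\lambda,\mu}\neq 0$ iff $\mu\trianglelefteq\lambda$. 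The same description holds verbatim for ${\rm supp}(s_{\lambda})$ by Proposition~\ref{thm-Kostka-dominance}, so the two supports coincide; in particular ${\rm Newton}(\mathfrak{S}^{(k)}_{\lambda})={\rm Newton}(s_{\lambda})$.

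Next I would identify this common Newton polytope with the $\lambda$-permutahedron $\mathcal{P}_{\lambda}$. One inclusion is immediate: every exponent vector $\alpha$ of $\mathfrak{S}^{(k)}_{\lambda}$ is a permutation of some partition $\mu\trianglelefteq\lambda$, hence lies in $\mathcal{P}_{\mu}\subseteq\mathcal{P}_{\lambda}$ by Rado's theorem, so ${\rm Newton}(\mathfrak{S}^{(k)}_{\lambda})\subseteq\mathcal{P}_{\lambda}$. For the reverse inclusion, note that the vertices of $\mathcal{P}_{\lambda}$ are exactly the $S_n$-permutations of $\lambda$, and each such permutation lies in ${\rm supp}(\mathfrak{S}^{(k)}_{\lambda})$ since $\lambda\trianglelefteq\lambda$ and $K^{(k)}_{\lambda,\lambda}=1$; as a convex hull of points in the support, $\mathcal{P}_{\lambda}\subseteq{\rm Newton}(\mathfrak{S}^{(k)}_{\lambda})$. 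Hence ${\rm Newton}(\mathfrak{S}^{(k)}_{\lambda})=\mathcal{P}_{\lambda}$.

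For the SNP property I would invoke Rado's theorem once more in the form already used for Schur polynomials: if $\alpha\in\mathcal{P}_{\lambda}\cap\mathbb{Z}^n$ then $\alpha$ has nonnegative integer entries summing to $|\lambda|$ (the polytope lies in the positive orthant on the hyperplane $\sum x_i=|\lambda|$), and $p(\alpha)\trianglelefteq\lambda$ because $\mathcal{P}_{p(\alpha)}\subseteq\mathcal{P}_{\lambda}$; therefore $\alpha\in{\rm supp}(\mathfrak{S}^{(k)}_{\lambda})$ by the support description above. This gives ${\rm supp}(\mathfrak{S}^{(k)}_{\lambda})={\rm Newton}(\mathfrak{S}^{(k)}_{\lambda})\cap\mathbb{Z}^n$, i.e.\ SNP. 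Finally, since $\mathfrak{S}^{(k)}_{\lambda}$ is homogeneous, is SNP, and has Newton polytope a $\lambda$-permutahedron — which is a generalized permutahedron — the equivalence quoted from \cite[Remark 4.1.1]{StD2020} (a homogeneous polynomial is M-convex iff it is SNP with generalized-permutahedron Newton polytope) yields that ${\rm supp}(\mathfrak{S}^{(k)}_{\lambda})$ is M-convex.

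The only substantive input here is Theorem~\ref{thm-k-Kostka-number} (the ``if'' direction, i.e.\ the explicit tableau construction), which has already been established; the remainder is a formal packaging of Rado's theorem, so I do not anticipate a genuine obstacle in this final step. The one point requiring a little care is the reduction to finitely many variables: one should remark that for $n\geq\ell(\lambda)$ the support, Newton polytope, and M-convexity statements are all independent of $n$ (padding with zeros), so it suffices to argue for a fixed sufficiently large $n$, which is what the computation above does.
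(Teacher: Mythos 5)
Your proposal is correct and follows essentially the same route as the paper: establish ${\rm supp}(\mathfrak{S}^{(k)}_{\lambda})={\rm supp}(s_{\lambda})$ via Propositions~\ref{thm-Kostka-dominance} and~\ref{pop-dual-schur-to-monmial} together with Theorem~\ref{thm-k-Kostka-number}, then conclude via Rado's Theorem~\ref{thm-Rado-Schur} and the cited equivalence from \cite{StD2020}. The paper states this more tersely, while you spell out the polytope identification, the lattice-point argument, and the finite-variable reduction, all of which are consistent with the paper's intended argument.
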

\begin{proof}	
Combining Proposition \ref{thm-Kostka-dominance}, Proposition \ref{pop-dual-schur-to-monmial}, Theorem \ref{thm-k-Kostka-number} and the definition of $s_{\lambda}$ it is clear that the support of $\mathfrak{S}_{\lambda}^{(k)}$ is the same as that of $s_{\lambda}$ for any fixed $k$-bounded partition $\lambda$. 
Thus, the statements of the theorem immediately follow from the above arguments, together with Theorem \ref{thm-Rado-Schur} and \cite[Remark 4.1.1]{StD2020}, respectively.
\end{proof}

\section{M-convexity of affine Stanley symmetric polynomials}\label{sec-affine-Stanley}
In this section, we further obtain the M-convexity of affine Stanley symmetric polynomials. As a corollary, we also derive that property for the cylindric skew Schur polynomials, a special subclass of the affine Stanley symmetric polynomials.

The affine Stanley symmetric functions were introduced by Lam \cite{LAM2006} analogous to the Stanley symmetric functions \cite{STAN1984}. These functions turn out to have a natural geometric interpretation, and 
they represent Schubert classes of the cohomology of the affine Grassmannian \cite{LAM2008}. There are several ways to define the affine Stanley symmetric functions; for more information see \cite{LAM2006,LAM2008,LLMS2010,YUN2013}.
In this paper we adopt one combinatorial definition given in \cite{LAM2006}.

Let $\widetilde{S}_{k+1}$ be the  affine symmetric group with generators $s_{0},s_{1},\ldots,s_{k}$ satisfying the affine Coxeter relations:
\begin{equation*}
	\begin{array}{cl}
		s_{i}^2=id & \text{ for all } i, \\
		s_{i}s_{i+1}s_{i}=s_{i+1}s_{i}s_{i+1}& \text{ for all } i,\\
		s_{i}s_{j}=s_{j}s_{i} & \text{ for } |i-j|\neq 1\mod (k+1).
	\end{array}
\end{equation*}
Note that the symmetric group $S_{k+1}$ embeds in $\widetilde{S}_{k+1}$ as the subgroup generated by the elements $s_{1},\ldots,s_{k}$. For an affine permutation $w\in \widetilde{S}_{k+1}$, a shortest expression of $w$ is called a reduced word. The length of $w$, denoted by $\ell(w)$, is the length of its reduced word. 

We would like to remark that each generator $s_i$ can be considered as an action on a $k+1$-core
$\gamma$ by removing its removable corners with $k+1$-residue $i$, or adding all addable corners with $k+1$-residue $i$, or doing nothing if there are no removable or addable corners with $k+1$-residue $i$.

The affine symmetric group can be realized as the set of all bijections $w:\mathbb{Z}\to\mathbb{Z}$ such that $w(i+k+1)=w(i)+k+1$ for all $i$, and $\sum_{i=1}^{k+1}w(i)=\sum_{i=1}^{k+1}i$. The code of  $w$, denoted as $c(w)$, is a  sequence $(c_{1},c_{2},\ldots,c_{k+1})$ of non-negative integers, where $c_{i}$ is the number of indices $j$ such that $j>i$ and $w(j)<w(i)$. 

An affine permutation $w\in\widetilde{S}_{k+1}$ is called cyclically decreasing if there exists a reduced word $s_{i_{1}}s_{i_{2}}\cdots s_{i_{l}}$ of $w$ such that each generator is distinct, and whenever $s_{i}$ and $s_{i+1}$ both occur, $s_{i+1}$ precedes $s_{i}$. A cyclically decreasing decomposition of $w\in\widetilde{S}_{k+1}$ is an expression of $w=w^{{1}}w^{{2}}\cdots w^{{r}}$ such that each $w^{i}$ is cyclically decreasing and $\ell(w)=\sum_{i=1}^{r}\ell(w^{i})$. Now we give the definition of affine Stanley symmetric function.

Given an affine permutation $w\in\widetilde{S}_{k+1}$, the affine Stanley symmetric function $\widetilde{F}_{w}$ is defined by 
\begin{align}\label{def-affine-stanley}
	\widetilde{F}_{w}=\sum_{w=w^{{1}}w^{{2}}\cdots w^{{n}}}x_{1}^{\ell(w^{1})}x_{2}^{\ell(w^{2})}\cdots x_{r}^{\ell(w^{r})},
\end{align}
where the summation is taken over all cyclically decreasing decomposition of $w$.
When $w$ is an affine Grassmannian permutation,
$\widetilde{F}_w$ is called an affine Schur function by Lam \cite{LAM2006}, who pointed out that these affine Schur functions are just the dual $k$-Schur functions defined by Lapointe and Morse \cite{LM2008}. When $w\in S_{k+1}$,  $\widetilde{F}_w$ is the usual Stanley symmetric function \cite{LAM2006}.

It is worth mentioning that the M-convexity of  Stanley symmetric polynomials can be obtained from their Schur positivity and the following theorem. 

\begin{thm}[{\cite[Proposition 2.5]{MTY2017}}]\label{thm-SNP-Schur-combin}
	Let $f=\sum_{\mu} c_{\mu}s_{\mu}$ be a homogeneous symmetric polynomial with degree $d$. If there exists a partition $\lambda$ such that $c_{\lambda}\neq 0$ and $c_{\mu}\neq0$ only if $\mu\trianglelefteq \lambda$, then ${\rm Newton}(f)=\mathcal{P}_{\lambda}$. Moreover, if  $c_{\mu}\geq 0$ for all $\mu$, then $f$ has SNP, and hence it is M-convex. 
\end{thm}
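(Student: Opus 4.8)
The final statement to prove is Theorem \ref{thm-SNP-Schur-combin}, attributed to Monical, Tokcan and Yong, which says that for a homogeneous symmetric polynomial $f = \sum_\mu c_\mu s_\mu$ of degree $d$, if some $c_\lambda \neq 0$ with $c_\mu \neq 0$ only when $\mu \trianglelefteq \lambda$, then $\mathrm{Newton}(f) = \mathcal{P}_\lambda$, and moreover if all $c_\mu \geq 0$ then $f$ is SNP and hence M-convex.

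\textbf{The plan.} The proof rests on the classical facts about Schur polynomials already assembled in the excerpt. First I would recall that each Schur polynomial $s_\mu$ is itself SNP with $\mathrm{Newton}(s_\mu) = \mathcal{P}_\mu$: this is exactly the consequence of Rado's Theorem \ref{thm-Rado-Schur} together with Proposition \ref{thm-Kostka-dominance} spelled out in the introduction (support of $s_\mu$ equals $\{\alpha : \mathrm{sort}(\alpha) \trianglelefteq \mu\}$, and the convex hull of this set is the $\mu$-permutahedron $\mathcal{P}_\mu$). The key structural input is then the monotonicity $\mathcal{P}_\mu \subseteq \mathcal{P}_\lambda$ whenever $\mu \trianglelefteq \lambda$ (again Theorem \ref{thm-Rado-Schur}), so that every monomial appearing in any $s_\mu$ with $c_\mu \neq 0$ has exponent vector lying in $\mathcal{P}_\lambda$.

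\textbf{Key steps, in order.} Step one: write $\mathrm{supp}(f) \subseteq \bigcup_{\mu : c_\mu \neq 0} \mathrm{supp}(s_\mu) \subseteq \bigcup_{\mu \trianglelefteq \lambda} (\mathcal{P}_\mu \cap \mathbb{Z}^n) \subseteq \mathcal{P}_\lambda$, using the triangularity hypothesis and $\mathcal{P}_\mu \subseteq \mathcal{P}_\lambda$. Hence $\mathrm{Newton}(f) \subseteq \mathcal{P}_\lambda$. Step two: for the reverse inclusion, note that the vertices of $\mathcal{P}_\lambda$ are precisely the permutations of $\lambda$, and that among all $\mu$ with $c_\mu \neq 0$, the monomials $x^\alpha$ with $\mathrm{sort}(\alpha) = \lambda$ occur only in $s_\lambda$ (since $\mu \trianglelefteq \lambda$ and $|\mu| = |\lambda| = d$ force $\mu = \lambda$ when $\lambda$ is reachable, i.e. $K_{\mu,\lambda} \neq 0$ only if $\lambda \trianglelefteq \mu$, combined with $\mu \trianglelefteq \lambda$). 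Because $K_{\lambda,\lambda} = 1$ and $c_\lambda \neq 0$, the coefficient of $x^\lambda$ in $f$ is exactly $c_\lambda \neq 0$, so every vertex of $\mathcal{P}_\lambda$ lies in $\mathrm{supp}(f)$; taking convex hull gives $\mathcal{P}_\lambda \subseteq \mathrm{Newton}(f)$, and combined with Step one, $\mathrm{Newton}(f) = \mathcal{P}_\lambda$. Step three: assume additionally $c_\mu \geq 0$ for all $\mu$. Then for $\alpha \in \mathcal{P}_\lambda \cap \mathbb{Z}^n$, we have $\mathrm{sort}(\alpha) = \nu \trianglelefteq \lambda$ (this characterization of the lattice points of $\mathcal{P}_\lambda$ is again Rado's theorem), so $K_{\lambda,\nu} \geq 1$ and the coefficient of $x^\alpha$ in $s_\lambda$ is $K_{\lambda,\nu} > 0$; since all other $c_\mu \geq 0$ contribute non-negatively (no cancellation is possible), the coefficient of $x^\alpha$ in $f$ is $\geq c_\lambda K_{\lambda,\nu} > 0$, so $\alpha \in \mathrm{supp}(f)$. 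This proves $\mathcal{P}_\lambda \cap \mathbb{Z}^n \subseteq \mathrm{supp}(f)$, i.e. $f$ is SNP. Finally, M-convexity follows from \cite[Remark 4.1.1]{StD2020}: $f$ is homogeneous, SNP, and $\mathrm{Newton}(f) = \mathcal{P}_\lambda$ is a generalized permutahedron.

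\textbf{Main obstacle.} The only subtle point is the no-cancellation argument in Step three — one must be careful that the hypothesis "$c_\mu \geq 0$ for all $\mu$'' genuinely rules out a lattice point of $\mathcal{P}_\lambda$ being absent from $\mathrm{supp}(f)$; this works precisely because each $s_\mu$ has non-negative integer coefficients (Kostka numbers), so the sum $\sum_\mu c_\mu s_\mu$ with all $c_\mu \geq 0$ has non-negative coefficients, and the $x^\alpha$-coefficient is strictly positive as soon as it is positive in the single term $c_\lambda s_\lambda$, which it is by Rado's characterization of $\mathcal{P}_\lambda \cap \mathbb{Z}^n$. Everything else is bookkeeping with dominance order and the permutation-invariance of supports of symmetric polynomials.
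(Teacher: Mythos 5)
Your proposal is correct. Note, though, that the paper offers no proof of this statement at all: it is quoted as \cite[Proposition 2.5]{MTY2017}, so the only in-paper object to compare with is the authors' proof of the analogous result for dual $k$-Schur expansions, Theorem \ref{thm-SNP-affine-Schur-combin}. Your argument follows essentially the same path as that proof: bound $\mathrm{Newton}(f)$ above by $\mathcal{P}_\lambda$ via Rado's theorem, exhibit the dominant term to get the reverse inclusion, and then use nonnegativity of the coefficients together with nonnegativity of the (here ordinary, there $k$-) Kostka numbers to rule out cancellation at any lattice point of $\mathcal{P}_\lambda$. The one mechanical difference is how the lower bound and no-cancellation steps are organized: the paper first rewrites $f$ in the monomial basis, where distinct $m_\mu$ have disjoint supports, so $\mathrm{Newton}(f)$ is literally the convex hull of the union of the $\mathcal{P}_\mu$ with $d_\mu\neq 0$ and the leading coefficient $d_\lambda\neq 0$ comes from unitriangularity; you instead stay in the Schur basis and use the triangularity $K_{\mu,\lambda}\neq 0\Rightarrow\lambda\trianglelefteq\mu$ to see that the coefficient of $x^\lambda$ in $f$ is exactly $c_\lambda$, which correctly gives $\mathcal{P}_\lambda\subseteq\mathrm{Newton}(f)$ without any positivity assumption. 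Both routes are sound, and your final appeal to \cite[Remark 4.1.1]{StD2020} for M-convexity matches the paper's usage.
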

We want to use Theorem \ref{thm-SNP-Schur-combin} to establish the M-convexity of the affine Stanley symmetric polynomials. Unfortunately, the affine Stanley symmetric polynomials are not always Schur positive, and the following gives such an example.
\begin{exam}
	Let $w=s_{2}s_{1}s_{0}s_{2}\in\widetilde{S}_{3}$. Since $w$ only has a reduced word $s_{2}s_{1}s_{0}s_{2}$, all cyclically decreasing decompositions of $w$ are
	\[(s_{2})(s_{1})(s_{0})(s_{2}),\,(s_{2}s_{1})(s_{0})(s_{2}),\,(s_{2})(s_{1}s_{0})(s_{2}),\,(s_{2})(s_{1})(s_{0}s_{2}),\,(s_{2}s_{1})(s_{0}s_{2}).\]
	Thus, we have
	\[\widetilde{F}_{w}=m_{1111}+m_{211}+m_{22}=s_{22}-s_{1111}.\]
\end{exam}

Fortunately, Lam \cite{LAM2008} showed that any affine Stanley symmetric function can be expressed as a non-negative linear combination of dual $k$-Schur functions.
 
\begin{thm}[{\cite[Corollary 8.5]{LAM2008}}]\label{thm-affine-schur-positive-of-affine-Stanley}
	For any $w\in \widetilde{S}_{k+1}$, the affine Stanley symmetric functions $\widetilde{F}_{w}$ expand positively in terms of dual $k$-Schur functions.
\end{thm}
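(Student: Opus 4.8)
The plan is to recall Lam's Hopf-algebraic and geometric argument from \cite{LAM2008} (this is his Corollary~8.5), since no purely combinatorial proof is known in this generality. First I would introduce the affine nilCoxeter algebra $\mathbb{A}$ of $\widetilde{S}_{k+1}$, with $\mathbb{Z}$-basis $\{A_w\}_{w\in\widetilde{S}_{k+1}}$ and product $A_uA_v=A_{uv}$ if $\ell(uv)=\ell(u)+\ell(v)$ and $A_uA_v=0$ otherwise, and inside it the affine Fomin--Stanley subalgebra $\mathbb{B}\subseteq\mathbb{A}$ generated by the elements $\mathsf{h}_i=\sum_vA_v$, the sum over cyclically decreasing $v$ with $\ell(v)=i$ for $0\le i\le k$. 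With this notation the definition \eqref{def-affine-stanley} of the affine Stanley symmetric functions is exactly the expansion
\[
\prod_{i\ge1}\Bigl(\,\sum_{j=0}^{k}\mathsf{h}_j\,x_i^{\,j}\,\Bigr)=\sum_{w\in\widetilde{S}_{k+1}}\widetilde{F}_w\,A_w
\]
in a completion of $\Lambda\otimes\mathbb{A}$, and since the $\mathsf{h}_i$ commute and generate $\mathbb{B}$, the left-hand side in fact lies in the completion of $\Lambda\otimes\mathbb{B}$.

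Next I would use Lam's structural results: $\mathsf{h}_i\mapsto h_i$ is a Hopf-algebra isomorphism of $\mathbb{B}$ onto $\Lambda_{(k)}:=\mathbb{Z}[h_1,\dots,h_k]\cong H_*(\mathrm{Gr}_{SL_{k+1}})$ carrying the noncommutative $k$-Schur functions $\mathsf{s}^{(k)}_\lambda\in\mathbb{B}$ to the $k$-Schur functions $s^{(k)}_\lambda$, which are the homology Schubert classes of $\mathrm{Gr}$, while dually $H^*(\mathrm{Gr}_{SL_{k+1}})\cong\Lambda^{(k)}=\Lambda/\langle m_\lambda:\lambda_1>k\rangle$ has cohomology Schubert basis exactly the dual $k$-Schur functions $\{\mathfrak{S}^{(k)}_\lambda\}_{\lambda\in\mathrm{Par}^k}$, which is Hall-dual to $\{s^{(k)}_\lambda\}_{\lambda\in\mathrm{Par}^k}$. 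Applying $\mathbb{B}\cong\Lambda_{(k)}$ to the displayed identity turns its left-hand side into $\sum_{\lambda\in\mathrm{Par}^k}h_\lambda\otimes m_\lambda(x)$, the reproducing kernel of the perfect Hall pairing $\Lambda_{(k)}\times\Lambda^{(k)}\to\mathbb{Z}$, which in the dual bases above equals $\sum_{\lambda\in\mathrm{Par}^k}s^{(k)}_\lambda\otimes\mathfrak{S}^{(k)}_\lambda(x)$. Transporting this back through $\Lambda_{(k)}\cong\mathbb{B}$ and comparing the coefficient of $A_w$ on the two sides gives
\[
\widetilde{F}_w=\sum_{\lambda\in\mathrm{Par}^k}\bigl([A_w]\,\mathsf{s}^{(k)}_\lambda\bigr)\,\mathfrak{S}^{(k)}_\lambda ,
\]
where $[A_w]\,\mathsf{s}^{(k)}_\lambda\in\mathbb{Z}$ is the coefficient of the standard basis element $A_w$ when $\mathsf{s}^{(k)}_\lambda\in\mathbb{B}\subseteq\mathbb{A}$ is expanded in $\{A_v\}_{v\in\widetilde{S}_{k+1}}$; in particular this already exhibits $\widetilde{F}_w$ as a $\mathbb{Z}$-linear combination of dual $k$-Schur functions.

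Everything up to here is formal manipulation inside $\mathbb{A}$ and the two paired rings of symmetric functions; the real content, and what I expect to be the main obstacle, is the positivity $[A_w]\,\mathsf{s}^{(k)}_\lambda\ge0$ for all $w\in\widetilde{S}_{k+1}$ and $\lambda\in\mathrm{Par}^k$ -- equivalently, that the noncommutative $k$-Schur functions expand with non-negative coefficients in the standard basis of the affine nilCoxeter algebra. This is invisible from the combinatorial definitions: one proves it by identifying these coefficients, via Peterson's subalgebra and the natural embedding of the affine Grassmannian into the affine flag variety, with Schubert coefficients of an effective class, and then invoking the positivity of Schubert calculus on these ind-varieties (an effective cycle class is a non-negative combination of Schubert classes, via the effectivity arguments extended to the Kac--Moody setting). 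Granting this positivity, the displayed expansion presents $\widetilde{F}_w$ as a non-negative combination of the $\mathfrak{S}^{(k)}_\lambda$, which is the assertion; together with Theorem~\ref{thm-M-SNP-dual-k-Schur} and the fact that the shapes $\lambda$ occurring admit a $\trianglelefteq$-maximum, this is what then yields the M-convexity of the affine Stanley symmetric polynomials.
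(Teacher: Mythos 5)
The paper does not actually prove this statement: it is imported verbatim as Lam's Corollary~8.5 from \cite{LAM2008}, so there is no internal proof to compare against. Your proposal is, in effect, an accurate reconstruction of Lam's own argument, and the formal part is sound: the kernel identity $\prod_i\bigl(\sum_{j=0}^{k}\mathsf{h}_jx_i^j\bigr)=\sum_w\widetilde{F}_wA_w$ in the affine nilCoxeter algebra, the commutativity of the $\mathsf{h}_i$ and the isomorphism of the affine Fomin--Stanley subalgebra $\mathbb{B}$ with $\mathbb{Z}[h_1,\dots,h_k]\cong H_*(\mathrm{Gr})$, the Hall-duality of $\{s^{(k)}_\lambda\}$ and $\{\mathfrak{S}^{(k)}_\lambda\}$, and the resulting identity $\widetilde{F}_w=\sum_\lambda\bigl([A_w]\,\mathsf{s}^{(k)}_\lambda\bigr)\mathfrak{S}^{(k)}_\lambda$ all match Lam's development (note that the duality of the two bases and the identification of $\mathbb{B}$ with $H_*(\mathrm{Gr})$ are themselves substantial theorems of Lapointe--Morse and Lam that you are, reasonably, quoting rather than proving). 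You also correctly locate the genuine content in the nonnegativity of the coefficients $[A_w]\,\mathsf{s}^{(k)}_\lambda$, which Lam obtains through Peterson's subalgebra and geometric positivity for Schubert classes on the affine (Kac--Moody) flag ind-varieties; your sketch defers this to the literature rather than proving it, which is exactly the status it has in the paper under review as well. So your proposal is a correct account of the known proof, not a new or more elementary one, and it is consistent with how the present paper uses the result (as an external input feeding Theorems~\ref{thm-affine-schur-expansion-of-affine-Stanley} and \ref{thm-SNP-affine-Schur-combin} to get Theorem~\ref{thm-S-M-affine-Ssf}).
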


Lam \cite{LAM2006}  also proved the existence of a dominant term in the expansion of $\widetilde{F}_{w}$ in terms of dual $k$-Schur functions.

\begin{thm}[{\cite[Theorem 21]{LAM2006}}]\label{thm-affine-schur-expansion-of-affine-Stanley}
	Suppose that $w\in \widetilde{S}_{k+1}$ and  
	$\widetilde{F}_{w}=\sum_{\lambda}a_{w\lambda}\mathfrak{S}_{\lambda}^{(k)}$. 
	If $a_{w\lambda}\neq 0$ then $\lambda\trianglelefteq\mu(w)$, where 
	$\mu(w)$ is the partition conjugate to the partition obtained by rearranging the parts of $c(w^{-1})$ in weakly decreasing order. 
	Moreover, $a_{w\mu(w)}=1$.	
\end{thm}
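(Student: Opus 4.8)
The plan is to pass through the monomial expansion and exploit that the dual $k$-Schur functions are unitriangularly related to the monomial basis. Since $\widetilde{F}_{w}$ is symmetric, I would write
\[
\widetilde{F}_{w}=\sum_{\mu}b_{\mu}\,m_{\mu},
\]
where $b_{\mu}$ counts the cyclically decreasing decompositions $w=w^{(1)}\cdots w^{(r)}$ whose length sequence $(\ell(w^{(1)}),\ldots,\ell(w^{(r)}))$ sorts to the partition $\mu$. Writing $M$ for the transition matrix $\mathfrak{S}^{(k)}_{\lambda}=\sum_{\mu}M_{\lambda\mu}m_{\mu}$, Proposition \ref{pop-dual-schur-to-monmial} together with Proposition \ref{property-k-Kostka-number-1} shows that $M$ is lower unitriangular for dominance order, so its inverse is as well and $(M^{-1})_{\mu\mu}=1$. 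Substituting $m_{\mu}=\sum_{\lambda\trianglelefteq\mu}(M^{-1})_{\mu\lambda}\mathfrak{S}^{(k)}_{\lambda}$ yields
\[
a_{w\lambda}=\sum_{\mu\,\trianglerighteq\,\lambda}b_{\mu}\,(M^{-1})_{\mu\lambda}.
\]
Thus both assertions reduce to one purely combinatorial statement: the support $\{\mu:b_{\mu}\neq0\}$ has a \emph{unique} dominance-maximum, equal to $\mu(w)$, and $b_{\mu(w)}=1$. Granting this, any $\lambda$ with $a_{w\lambda}\neq0$ forces some support element $\mu\trianglerighteq\lambda$, and since $\mu(w)$ dominates the whole support we get $\lambda\trianglelefteq\mu(w)$; moreover the only surviving term of $a_{w\mu(w)}$ is $b_{\mu(w)}(M^{-1})_{\mu(w)\mu(w)}=1$.

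It remains to analyze the shapes coming from cyclically decreasing decompositions, and I would do this through a canonical greedy one: repeatedly peel off the cyclically decreasing left factor of maximal length, $w=u_{1}u_{2}\cdots$ with $\ell(w)=\sum_i\ell(u_{i})$. Two things must be shown. First, the length sequence $\kappa=(\ell(u_{1}),\ell(u_{2}),\ldots)$ is weakly decreasing, so $\kappa$ is a genuine partition. Second, $\kappa$ dominates the sorted length sequence of every cyclically decreasing decomposition, and the greedy decomposition is the unique one of shape $\kappa$, so that $b_{\kappa}=1$. Both the monotonicity and the dominance should follow from an exchange argument: transferring length from a later factor into an earlier one increases the sorted shape in dominance (the standard ``move a box up'' principle), and the greedy step front-loads length maximally, forcing the choice at each stage; uniqueness then comes from the fact that the maximal left factor is determined by $w$.

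The last step is to identify $\kappa$ with $\mu(w)=(\,\mathrm{sort}\,c(w^{-1})\,)'$. Using the one-line (window) notation, the length of the maximal cyclically decreasing left factor, and more generally the full greedy shape, should be readable from the inversion data recorded by the code $c(w^{-1})$; the conjugation arises because each cyclically decreasing factor deposits one cell in several distinct residue classes, so stacking the factors records the code column by column. This parallels the classical Stanley symmetric function picture, in which the dominant Schur term of $F_{w}$ is indexed by the conjugate sorted Lehmer code and is extracted via Edelman--Greene insertion.

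The main obstacle is precisely this closing lemma: proving that the greedy cyclically decreasing factorization yields a partition that is dominance-maximal, unique, and equal to $(\,\mathrm{sort}\,c(w^{-1})\,)'$. The affine cyclic order on residues makes the descent bookkeeping subtler than in the finite case, since peeling a left factor can create new descents and the support of the maximal factor need not be the left-descent set, so the exchange argument must track residues modulo $k+1$. A cleaner engine for this step is to realize $\widetilde{F}_{w}$ inside the affine nilCoxeter (Fomin--Stanley) algebra, where $\widetilde{F}_{w}=\sum_{\mu}\langle\, h_{\mu_{r}}\cdots h_{\mu_{1}}\cdot 1,\,w\,\rangle\,m_{\mu}$ for the cyclically decreasing generating elements $h_{j}$, and to read the leading term from their action on the affine Grassmannian quotient. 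I would emphasize that the dual $k$-Schur positivity of Theorem \ref{thm-affine-schur-positive-of-affine-Stanley} is \emph{not} needed for either claim, since the unitriangularity argument pins down the leading term directly; positivity would only bear on the lower-order coefficients $a_{w\lambda}$ with $\lambda\triangleleft\mu(w)$.
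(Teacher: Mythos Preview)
This theorem is not proved in the paper: it is quoted verbatim from Lam \cite[Theorem~21]{LAM2006} and used as a black box. So there is no ``paper's own proof'' to compare against.

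That said, your reduction is sound and is in fact the strategy Lam uses: the unitriangularity of the dual $k$-Schur functions in the monomial basis (your matrix $M$) does reduce both claims to showing that the monomial support $\{\mu:b_\mu\neq 0\}$ of $\widetilde{F}_w$ has $\mu(w)$ as its unique dominance-maximal element with $b_{\mu(w)}=1$.

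However, your write-up stops short of a proof at exactly the point you yourself flag as ``the main obstacle.'' The three assertions you need --- that the greedy cyclically decreasing factorization has weakly decreasing lengths, that its shape dominates every other factorization shape and is the unique factorization of that shape, and that this shape equals $(\mathrm{sort}\,c(w^{-1}))'$ --- are each stated but not argued. Phrases like ``should follow from an exchange argument'' and ``should be readable from the inversion data'' are not proofs, and in the affine setting the bookkeeping is genuinely delicate (as you note, peeling a maximal cyclically decreasing left factor interacts nontrivially with the cyclic residue structure; the maximal factor is not simply the left descent set). Lam's actual argument handles this by working with the code directly and constructing the canonical factorization from it, rather than defining the factorization greedily and then identifying its shape afterward; this sidesteps the exchange argument you propose. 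As written, your proposal is a correct outline with the combinatorial core missing.
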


In order to obtain the M-convexity of affine Stanley polynomials, we now give a criterion for determining whether the linear combinations of dual $k$-Schur polynomials are SNP or not. 

\begin{thm}\label{thm-SNP-affine-Schur-combin}
	Suppose that $f$ is a homogeneous symmetric polynomial of degree $d$ in $n$ variables and it has the following expansion
	\begin{align}\label{eq-f-expansion}
		f=\sum_{\mu\trianglelefteq \lambda}c_{\mu}\mathfrak{S}^{(k)}_{\mu},
	\end{align} 
	where $\lambda$ is a $k$-bounded partition of $d$. 
If there exists a partition $\lambda$ such that $c_{\lambda}\neq0$ and $c_{\mu}\neq0$ only if $\mu\trianglelefteq \lambda$, then ${\rm Newton}(f)=\mathcal{P}_{\lambda}$. 
	Moreover, if $c_{\mu}\geq0$ for all $\mu$, then $f$ has SNP and hence it is M-convex.
\end{thm}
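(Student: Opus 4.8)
The plan is to mimic the proof strategy of Theorem~\ref{thm-SNP-Schur-combin}, replacing Schur polynomials with dual $k$-Schur polynomials and using the results established earlier in the paper to control supports. First I would recall that by Theorem~\ref{thm-M-SNP-dual-k-Schur} each dual $k$-Schur polynomial $\mathfrak{S}^{(k)}_{\mu}$ has support equal to ${\rm supp}(s_{\mu})$, so ${\rm Newton}(\mathfrak{S}^{(k)}_{\mu})=\mathcal{P}_{\mu}$. Hence $\mathcal{P}_{\mu}\subseteq\mathcal{P}_{\lambda}$ for every $\mu$ appearing in \eqref{eq-f-expansion}, by Theorem~\ref{thm-Rado-Schur} and the hypothesis $\mu\trianglelefteq\lambda$. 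Since ${\rm Newton}(f)\subseteq{\rm conv}\bigl(\bigcup_{\mu}{\rm Newton}(\mathfrak{S}^{(k)}_{\mu})\bigr)\subseteq\mathcal{P}_{\lambda}$, and conversely $\mathcal{P}_{\lambda}={\rm Newton}(\mathfrak{S}^{(k)}_{\lambda})$, the reverse inclusion ${\rm Newton}(f)\supseteq\mathcal{P}_{\lambda}$ will follow once I know that no cancellation kills the extreme monomials of $\mathfrak{S}^{(k)}_{\lambda}$; but the vertices of $\mathcal{P}_{\lambda}$ are exactly the permutations of $\lambda$, whose coefficient in $\mathfrak{S}^{(k)}_{\lambda}$ is $1$ (the $m_\lambda$ term, via Proposition~\ref{pop-dual-schur-to-monmial}) and which appear in $\mathfrak{S}^{(k)}_{\mu}$ only if $\lambda\trianglelefteq\mu$, forcing $\mu=\lambda$. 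So the coefficient of $x^{\lambda}$ (and of each permutation of $\lambda$) in $f$ is exactly $c_{\lambda}\neq0$, giving ${\rm Newton}(f)=\mathcal{P}_{\lambda}$.

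For the SNP statement, assume $c_{\mu}\geq0$ for all $\mu$. Then there is no cancellation whatsoever: a lattice point $\alpha\in\mathbb{Z}^n$ lies in ${\rm supp}(f)$ iff $\alpha\in{\rm supp}(\mathfrak{S}^{(k)}_{\mu})$ for some $\mu$ with $c_{\mu}>0$. I would argue $\bigcup_{\mu:\,c_\mu>0}{\rm supp}(\mathfrak{S}^{(k)}_{\mu})=\mathcal{P}_{\lambda}\cap\mathbb{Z}^n$: the inclusion $\subseteq$ is immediate from the first part, and for $\supseteq$, take $\alpha\in\mathcal{P}_{\lambda}\cap\mathbb{Z}^n$; letting $p(\alpha)$ be its decreasing rearrangement, $p(\alpha)$ is a partition of $d$ with $p(\alpha)\trianglelefteq\lambda$ by Theorem~\ref{thm-Rado-Schur}, so $c_{p(\alpha)}$ makes sense in the sum — wait, this requires $c_{p(\alpha)}>0$, which need not hold. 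The correct route instead uses that $\mathfrak{S}^{(k)}_{\lambda}$ itself is SNP with ${\rm Newton}(\mathfrak{S}^{(k)}_{\lambda})=\mathcal{P}_{\lambda}$ (Theorem~\ref{thm-M-SNP-dual-k-Schur}), so $\mathcal{P}_{\lambda}\cap\mathbb{Z}^n={\rm supp}(\mathfrak{S}^{(k)}_{\lambda})\subseteq\bigcup_{\mu:\,c_\mu>0}{\rm supp}(\mathfrak{S}^{(k)}_{\mu})\subseteq{\rm supp}(f)$, using $c_{\lambda}>0$. Combined with ${\rm supp}(f)\subseteq{\rm Newton}(f)\cap\mathbb{Z}^n=\mathcal{P}_{\lambda}\cap\mathbb{Z}^n$, we get equality, i.e.\ $f$ is SNP.

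Finally, M-convexity follows formally: $f$ is homogeneous by hypothesis, it is SNP, and its Newton polytope $\mathcal{P}_{\lambda}$ is a $\lambda$-permutahedron, hence a generalized permutahedron; by \cite[Remark 4.1.1]{StD2020} a homogeneous polynomial that is SNP with generalized-permutahedron Newton polytope is M-convex.

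The main obstacle, and the only genuinely delicate point, is the non-cancellation argument underpinning ${\rm Newton}(f)\supseteq\mathcal{P}_{\lambda}$ and the SNP equality when some coefficients $c_\mu$ could in principle be negative in the Newton-polytope part (they are only assumed nonnegative for the SNP part): I must be careful that the extreme exponents of $\mathcal{P}_{\lambda}$ are detected with nonzero coefficient in $f$. This is handled precisely by the triangularity $K^{(k)}_{\mu,\nu}\neq0\Rightarrow\nu\trianglelefteq\mu$ together with $K^{(k)}_{\lambda,\lambda}=1$ from Proposition~\ref{property-k-Kostka-number-1} and Proposition~\ref{pop-dual-schur-to-monmial}, exactly as in the Schur case; everything else is a transcription of the proof of Theorem~\ref{thm-SNP-Schur-combin} with $\mathfrak{S}^{(k)}$ in place of $s$.
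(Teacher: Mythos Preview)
Your proof is correct and follows essentially the same strategy as the paper's own proof: both establish ${\rm Newton}(f)=\mathcal{P}_{\lambda}$ via Rado's theorem together with the triangularity of the $k$-Kostka numbers (so that the coefficient of $m_{\lambda}$, equivalently of $x^{\lambda}$, in $f$ equals $c_{\lambda}\neq 0$), and both deduce SNP in the nonnegative case by showing every lattice point of $\mathcal{P}_{\lambda}$ already lies in ${\rm supp}(\mathfrak{S}^{(k)}_{\lambda})$ and hence in ${\rm supp}(f)$. The only cosmetic difference is that you invoke Theorem~\ref{thm-M-SNP-dual-k-Schur} as a black box for the SNP of $\mathfrak{S}^{(k)}_{\lambda}$, whereas the paper expands into monomials and appeals directly to Theorem~\ref{thm-k-Kostka-number}; since Theorem~\ref{thm-M-SNP-dual-k-Schur} was derived from Theorem~\ref{thm-k-Kostka-number}, the two arguments are interchangeable.
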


\begin{proof}
	Note that if $\ell(\lambda)>n$, then $c_{\lambda}=0$ and therefore we may assume that $n\geq \ell(\lambda)$.
	Let us first prove that 
	\[{\rm Newton}(f)=\mathcal{P}_{\lambda}.\] 
	Substituting  \eqref{eq-affine-schur-to-monamial} into  \eqref{eq-f-expansion}, we obtain
	\[f=\sum_{\mu\trianglelefteq\lambda}d_{\mu}m_{\mu}.\]
	By Proposition \ref{property-k-Kostka-number-1}, we see that $d_{\lambda}\neq 0$.
	For each partition $\mu$, by definition we have ${\rm Newton}(m_{\mu})=\mathcal{P}_{\mu}$.
	For any different partitions $\mu,\,\nu$ and real numbers $a,b$ with $ab\neq 0$, note that
	\[{\rm Newton}(a\cdot m_{\mu}+b\cdot m_{\nu})={\rm conv}\left({\rm Newton}(m_{\mu})\cup{\rm Newton}(m_{\nu})\right).\] 
	Thus,
	\begin{equation}\label{eq-pf-affine-schur-newton-f}
		{\rm Newton}(f)={\rm conv}\left(\bigcup_{\mu\trianglelefteq\lambda,\,d_{\mu}\neq 0}{\rm Newton}(m_{\mu})\right)={\rm conv}\left(\bigcup_{\mu\trianglelefteq\lambda,\,d_{\mu}\neq 0}\mathcal{P}_{\mu}\right).
	\end{equation}
	Since $\mathcal{P}_{\mu}\subseteq \mathcal{P}_{\lambda}$ for any $\mu\trianglelefteq\lambda$ by Theorem \ref{thm-Rado-Schur}, we immediately obtain that ${\rm Newton}(f)=\mathcal{P}_{\lambda}$ as desired.
	
	Next we prove that $f$ has SNP.
	Assume that $\alpha\in \mathcal{P}_{\lambda}$ is a lattice point, then it suffices to show $\alpha\in {\rm supp}(f)$. 
	We also denote $p(\alpha)$ as the partition obtained by rearranging $\alpha$ in weakly decreasing order. Hence, $\mathcal{P}_{p(\alpha)}=\mathcal{P}_{\alpha}\subseteq \mathcal{P}_{\lambda}$. 
	Then Theorem \ref{thm-Rado-Schur} implies that $p(\alpha)\trianglelefteq\lambda$. From Theorem \ref{thm-k-Kostka-number} it follows that $K_{\lambda,p(\alpha)}^{(k)}\neq0$. 
	Consequently, $m_{p(\alpha)}$ appears in the expansion of $\mathfrak{S}_{\lambda}^{(k)}$ by Proposition \ref{pop-dual-schur-to-monmial}. 
	Since $x^{\alpha}$ appears in $m_{p(\alpha)}$, it follows that $x^{\alpha}$ appears in $\mathfrak{S}_{\lambda}^{(k)}$. 
	Moreover, by the condition $c_{\mu}\geq 0$ and $c_{\lambda}>0$, we conclude that $x^{\alpha}$ will not vanish in $f$, i.e., $\alpha\in {\rm supp}(f)$. This completes the proof.
\end{proof}

We are now in a position to present the main result of this section.
\begin{thm}\label{thm-S-M-affine-Ssf}
	For any $w\in \widetilde{S}_{k+1}$, the affine Stanley symmetric polynomial $\widetilde{F}_{w}$ is M-convex.
\end{thm}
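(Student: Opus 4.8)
The plan is to deduce Theorem \ref{thm-S-M-affine-Ssf} by combining the structural results about the dual $k$-Schur expansion of $\widetilde{F}_w$ with the SNP criterion just established. First I would recall that, by Theorem \ref{thm-affine-schur-positive-of-affine-Stanley} (Lam's affine Schur positivity), for any $w\in\widetilde{S}_{k+1}$ we may write
\[
\widetilde{F}_w=\sum_{\lambda}a_{w\lambda}\mathfrak{S}^{(k)}_{\lambda}
\]
with all coefficients $a_{w\lambda}\ge 0$. Next, by Theorem \ref{thm-affine-schur-expansion-of-affine-Stanley}, the partition $\mu(w)$ (the conjugate of the decreasing rearrangement of $c(w^{-1})$) satisfies $a_{w\mu(w)}=1\neq 0$, and $a_{w\lambda}\neq 0$ forces $\lambda\trianglelefteq\mu(w)$. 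Since each $\widetilde{F}_w$ is homogeneous of degree $\ell(w)$ and $\mu(w)$ is a $k$-bounded partition of $\ell(w)$ (its first part is at most $k$ because $c(w^{-1})$ has $k+1$ entries, so its conjugate has at most $k$ parts — here one should double-check the precise bound, but the conjugation guarantees $k$-boundedness), the expansion of $\widetilde{F}_w$ has exactly the shape required by the hypothesis of Theorem \ref{thm-SNP-affine-Schur-combin}, with $\lambda=\mu(w)$ and $c_\mu=a_{w\mu}$.

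With these pieces in place the argument is immediate: apply Theorem \ref{thm-SNP-affine-Schur-combin} with $f=\widetilde{F}_w$, $\lambda=\mu(w)$, and $c_\mu=a_{w\mu}\ge 0$. The hypothesis ``there exists $\lambda$ with $c_\lambda\neq 0$ and $c_\mu\neq 0$ only if $\mu\trianglelefteq\lambda$'' is met by $\mu(w)$, and the nonnegativity hypothesis is Lam's positivity. We conclude that $\mathrm{Newton}(\widetilde{F}_w)=\mathcal{P}_{\mu(w)}$, that $\widetilde{F}_w$ has SNP, and hence that $\widetilde{F}_w$ is M-convex. One small point to address explicitly is the restriction to finitely many variables: as in the convention stated in the introduction, $\widetilde{F}_w$ is regarded as a polynomial by restricting to $x_1,\dots,x_n$, and since the support is governed entirely by the monomial symmetric polynomials $m_\mu$ with $\mu\trianglelefteq\mu(w)$, any $n\ge \ell(\mu(w))$ gives the full picture (for smaller $n$ the relevant monomials vanish and the statement is vacuous or restricts consistently).

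The main (and essentially only) obstacle is bookkeeping rather than mathematics: one must make sure that $\mu(w)$ is genuinely a $k$-bounded partition of $d=\ell(w)$ so that Theorem \ref{thm-SNP-affine-Schur-combin} applies verbatim, and that the degree of $\widetilde{F}_w$ matches $|\mu(w)|$. Both follow from the definition of $\mu(w)$ via the code $c(w^{-1})\in\mathbb{Z}_{\ge 0}^{k+1}$ and the identity $\ell(w)=\ell(w^{-1})=\sum_i c_i(w^{-1})=|\mu(w)|$, together with the observation that conjugating a partition with at most $k+1$ parts and largest part unrestricted yields a partition whose largest part is the number of parts of the original, which is at most $k+1$ — so in fact one should invoke the sharper fact that $c(w^{-1})$ has at most $k$ \emph{nonzero} entries when $w$ is not the identity shifted appropriately, or simply note that $\mathfrak{S}^{(k)}_\lambda$ is only defined for $k$-bounded $\lambda$ so the nonzero terms in Lam's expansion are automatically $k$-bounded. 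Modulo this routine verification, the proof is a one-line application of the previously established theorems.

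\begin{proof}
By Theorem \ref{thm-affine-schur-positive-of-affine-Stanley}, for any $w\in\widetilde{S}_{k+1}$ we may expand $\widetilde{F}_w=\sum_{\mu}a_{w\mu}\mathfrak{S}^{(k)}_{\mu}$ with $a_{w\mu}\ge 0$ for all $\mu$. Since $\mathfrak{S}^{(k)}_{\mu}$ is defined only for $k$-bounded partitions $\mu$, every index $\mu$ appearing with $a_{w\mu}\neq 0$ lies in $\mathrm{Par}^k$; moreover $\widetilde{F}_w$ is homogeneous of degree $\ell(w)$, so $\mu$ is a $k$-bounded partition of $d:=\ell(w)$. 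By Theorem \ref{thm-affine-schur-expansion-of-affine-Stanley}, there is a $k$-bounded partition $\mu(w)$ of $d$ with $a_{w\mu(w)}=1\neq 0$ such that $a_{w\mu}\neq 0$ implies $\mu\trianglelefteq\mu(w)$. Hence $\widetilde{F}_w=\sum_{\mu\trianglelefteq\mu(w)}a_{w\mu}\mathfrak{S}^{(k)}_{\mu}$ satisfies all the hypotheses of Theorem \ref{thm-SNP-affine-Schur-combin} with $\lambda=\mu(w)$ and $c_{\mu}=a_{w\mu}\ge 0$. Applying that theorem, $\mathrm{Newton}(\widetilde{F}_w)=\mathcal{P}_{\mu(w)}$, the polynomial $\widetilde{F}_w$ has SNP, and consequently $\widetilde{F}_w$ is M-convex.
\end{proof}
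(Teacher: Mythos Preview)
Your proof is correct and follows exactly the same approach as the paper's own proof, which simply states that the result follows immediately from Theorems \ref{thm-affine-schur-positive-of-affine-Stanley}, \ref{thm-affine-schur-expansion-of-affine-Stanley}, and \ref{thm-SNP-affine-Schur-combin}. Your version is more detailed in verifying the hypotheses (nonnegativity, existence of the dominant term $\mu(w)$, and $k$-boundedness of the indexing partitions), but the underlying argument is identical.
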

\begin{proof}
The statements of the theorem immediately follow from Theorem \ref{thm-affine-schur-positive-of-affine-Stanley}, Theorem \ref{thm-affine-schur-expansion-of-affine-Stanley} and Theorem \ref{thm-SNP-affine-Schur-combin}.	  
\end{proof}

The above theorem also enables us to obtain the M-convexity of cylindric skew Schur functions, which were introduced by Postnikov \cite{POS2005}.
To be self-contained, we also give an overview of related definitions following \cite{LAM2006}.

A cylindric shape $\lambda$ is an infinite lattice path in $\mathbb{Z}^2$, consisting only of steps upwards and to the left, invariant under shifts $(m-n,m)$ where $1\leq m\leq n-1$. Let $C^{n,m}$ be the set of cylindric shapes. For any two cylindric shapes $\lambda,\,\mu\in C^{n,m}$, we say that $\mu\subseteq\lambda$, if $\mu$ always lies weakly to the left of $\lambda$. If $\mu\subseteq\lambda$, then we call $\lambda/\mu$ a cylindric skew shape.

Given a cylindric skew shape $\lambda/\mu$, a semistandard cylindric skew tableau of shape $\lambda/\mu$ and weight $\alpha=(\alpha_{1},\alpha_{2},\ldots,\alpha_{\ell})$ is a chain of cylindric shapes in $C^{n,m}$, i.e, 
\[\mu=\lambda^{0}\subseteq\lambda^{1}\subseteq\cdots\subseteq\lambda^{\ell}=\lambda,\]
such that the cylindric skew shape $\lambda^{i}/\lambda^{i-1}\, (1\leq i\leq\ell)$ contains at most one box in each column and $\alpha_{i}$ boxes in any $n-m$ consecutive columns.
\begin{exam}
	Let $n=5$, $m=2$ and $\alpha=(1,2,1,2)$. Putting  $i$ into the boxes of $\lambda^{i}/\lambda^{i-1}$, we obtain a semistandard cylindric skew tableau of weight $(1,2,1,2)$, as shown in Figure \ref{fig-SSCST}.   
	\begin{figure}[H]
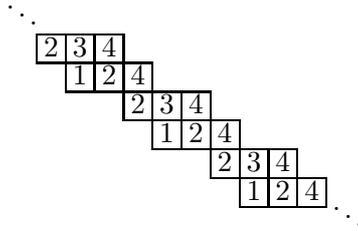

		\centering
		\ytableausetup{centertableaux,boxsize=0.95em}
		\begin{ytableau}
			\none[\ddots] \\
			\none&2&3&4\\
			\none&\none&1&2&4\\
			\none&\none&\none&\none&2&3&4\\
			\none&\none&\none&\none&\none&1&2&4\\
			\none&\none&\none&\none&\none&\none&\none&2&3&4\\
			\none&\none&\none&\none&\none&\none&\none&\none&1&2&4\\
			\none&\none&\none&\none&\none&\none&\none&\none&\none&\none&\none&\none[~\ddots]\end{ytableau}
		\caption{A semistandard cylindric skew tableau with weight $\alpha$.}
		\label{fig-SSCST}
		\vspace{-1em}
	\end{figure}
\end{exam}

For a cylindric skew shape $\lambda/\mu$, the cylindric skew Schur function $s_{\lambda/\mu}^{c}$ is defined as 
\[s_{\lambda/\mu}^{c}=\sum_{T}x^{\text{weight}(T)},\]
summing over all semistandard cylindric skew tableaux of shape $\lambda/\mu$.

The cylindric skew Schur functions generalizes usual Schur functions.  McNamara proved in \cite{MCN2006} that, with the exception of trivial cases, the cylindric skew Schur functions are not Schur positive in general. 

Lam \cite{LAM2006} demonstrated that the cylindric skew Schur functions are indeed special cases of the affine Stanley symmetric functions indexed by 321-avoiding affine permutations. This conclusion was later reproved by Lee \cite{LEE2019}.

\begin{thm}[{\cite[Corollary 5]{LEE2019}}]\label{thm-cylindric-skew-schur}
	For a cylindric shape $\lambda/\mu\in C^{n,m}$, there exists a 321-avoiding affine permutation $w\in \widetilde{S}_{n}$ such that the cylindric skew Schur function $s_{\lambda/\mu}^{c}$ is the same as $\widetilde{F}_{w}$.
\end{thm}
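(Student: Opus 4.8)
The plan is to establish $s^{c}_{\lambda/\mu}=\widetilde{F}_{w}$ by constructing, from the cylindric skew shape $\lambda/\mu\in C^{n,m}$, an explicit $321$-avoiding affine permutation $w\in\widetilde{S}_{n}$ together with a weight-preserving bijection between semistandard cylindric skew tableaux of shape $\lambda/\mu$ and cyclically decreasing decompositions of $w$. This is the affine counterpart of the classical Billey--Jockusch--Stanley identity, which in the finite setting expresses the Stanley symmetric function of a $321$-avoiding permutation as a single skew Schur function, and the argument follows the same blueprint, with the cylindric periodicity tracked via residues modulo $n$.

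First I would set up the dictionary between cylindric shapes and affine permutations. The boundary of a cylindric shape $\lambda\in C^{n,m}$ is a bi-infinite lattice path of up- and left-steps invariant under the shift $(m-n,m)$; one fundamental domain contains exactly $m$ up-steps and $n-m$ left-steps, $n$ steps in total, which accounts for the index $n$ in $\widetilde{S}_{n}$. Recording each step as $1$ (up) or $0$ (left) yields a bi-infinite binary word that is $n$-periodic with $m$ ones per period, and such words encode the cosets of a maximal parabolic subgroup of $\widetilde{S}_{n}$; the inner boundary $\mu$ and the outer boundary $\lambda$ each give one such word. I would assign to every cell of $\lambda/\mu$ its content $c-r$ reduced modulo $n$ (well defined since the shift changes the content by $-n$), so that each cell carries a residue in $\{0,1,\dots,n-1\}$, i.e.\ a generator label. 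The permutation $w$ is then read off from the relative displacement of the $\mu$- and $\lambda$-boundaries; concretely its heap should be the poset of cells of $\lambda/\mu$ ordered by cylindric adjacency, and its reduced words are exactly the residue sequences obtained by reading the cells in any order compatible with this poset.

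The crux is a single horizontal-strip lemma. I would prove that enlarging $\lambda^{i-1}$ to $\lambda^{i}$ by a cylindric horizontal strip---one with at most one box per column and exactly $\alpha_{i}$ boxes in every $n-m$ consecutive columns---corresponds to left multiplication of the associated affine permutation by a cyclically decreasing element $u_{i}$ with $\ell(u_{i})=\alpha_{i}$, and that this product is length-additive. In one direction, the residues read off such a strip, ordered by column, satisfy precisely the defining condition of cyclic decreasingness (the residues are distinct, and $s_{j+1}$ precedes $s_{j}$ whenever both residues $j$ and $j+1$ occur); in the other direction a cyclically decreasing $u_{i}$ of length $\alpha_{i}$ inflates the boundary by exactly such a strip. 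The per-period counts ($n-m$ columns against $n$ residues) are what force the length and the shape to agree, and the $321$-avoidance of the accumulated permutation is the full commutativity that lets these factorizations compose without any braid move.

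With the lemma in hand, a semistandard cylindric skew tableau, namely a chain $\mu=\lambda^{0}\subseteq\cdots\subseteq\lambda^{\ell}=\lambda$ of such horizontal strips with weight $\alpha=(\alpha_{1},\dots,\alpha_{\ell})$, maps to the cyclically decreasing decomposition $w=u_{1}u_{2}\cdots u_{\ell}$ with $\ell(u_{i})=\alpha_{i}$, and this assignment is reversible; since it sends the tableau weight to the exponent vector, summing $x^{\text{weight}}$ over both sides and comparing with \eqref{def-affine-stanley} gives $s^{c}_{\lambda/\mu}=\widetilde{F}_{w}$. I expect the main obstacle to be verifying the $321$-avoidance---equivalently the full commutativity---of $w$ and confirming that its heap is genuinely the cylindric skew shape: one must check that no three cells of $\lambda/\mu$ create a decreasing pattern of length three in $w$, which is exactly where the cylindric structure (only up- and left-steps with the prescribed period) is used essentially, and this is also the step that makes the decomposition bijection exhaustive rather than merely injective.
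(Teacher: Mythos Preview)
The paper does not prove this theorem at all: it is quoted as a known result, attributed to Lee \cite{LEE2019} (and earlier to Lam \cite{LAM2006}), and then immediately applied to deduce Corollary~\ref{thm-SNP-cylindric-skew-Schur-function}. There is therefore no ``paper's own proof'' to compare your proposal against.

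That said, your outline is essentially the standard argument from \cite{LAM2006} and \cite{LEE2019}: encode the two cylindric boundaries by $n$-periodic binary words, read the residues of the cells of $\lambda/\mu$ to build a heap, identify that heap with a fully commutative (equivalently, $321$-avoiding) element $w\in\widetilde{S}_{n}$, and match cylindric horizontal strips with cyclically decreasing factors to obtain a weight-preserving bijection between semistandard cylindric tableaux and cyclically decreasing factorizations of $w$. As a plan this is sound; the only caveat is that what you have written is a sketch rather than a proof---the key ``single horizontal-strip lemma'' and the verification that the heap of cells really is the heap of a $321$-avoiding affine permutation are exactly the substantive steps in \cite{LAM2006,LEE2019}, and you would need to carry them out in full to have an independent argument.
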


Combining Theorem \ref{thm-S-M-affine-Ssf} and \ref{thm-cylindric-skew-schur}, we immediately obtain the M-convexity of the cylindric skew Schur polynomials.

\begin{cor}\label{thm-SNP-cylindric-skew-Schur-function}
The cylindric skew Schur polynomial $s_{\lambda/\mu}^{c}$ is M-convex for any cylindric skew shape $\lambda/\mu$.
\end{cor}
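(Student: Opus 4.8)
The plan is to obtain this corollary as a formal consequence of the two results immediately preceding it, Theorem~\ref{thm-cylindric-skew-schur} and Theorem~\ref{thm-S-M-affine-Ssf}. First I would fix a cylindric skew shape $\lambda/\mu\in C^{n,m}$ and apply Theorem~\ref{thm-cylindric-skew-schur} to produce a $321$-avoiding affine permutation $w\in\widetilde{S}_{n}$ for which $s^{c}_{\lambda/\mu}=\widetilde{F}_{w}$ as symmetric functions. Since this is an honest equality of symmetric functions, restricting to any fixed finite set of variables $x_{1},\ldots,x_{N}$ yields an equality of polynomials; in particular ${\rm supp}(s^{c}_{\lambda/\mu})={\rm supp}(\widetilde{F}_{w})$ in $\mathbb{N}^{N}$ for every $N$. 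This is the only place where one has to be mildly careful, and it is harmless given the convention adopted in the introduction that a symmetric function is viewed as a polynomial by restriction to finitely many variables.

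Next I would invoke Theorem~\ref{thm-S-M-affine-Ssf}, which states that the affine Stanley symmetric polynomial $\widetilde{F}_{w}$ is M-convex. Since M-convexity of a polynomial is by definition a property of its support, and we have just observed that $s^{c}_{\lambda/\mu}$ and $\widetilde{F}_{w}$ share the same support (in every number of variables), it follows that $s^{c}_{\lambda/\mu}$ is M-convex. If one also wishes to record the Newton polytope, Theorem~\ref{thm-affine-schur-expansion-of-affine-Stanley} together with Theorem~\ref{thm-SNP-affine-Schur-combin} identifies it as the permutahedron $\mathcal{P}_{\mu(w)}$ for the corresponding dominant partition $\mu(w)$.

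There is no genuine obstacle in this argument: the entire content has already been absorbed into Theorem~\ref{thm-S-M-affine-Ssf} (whose proof in turn rests on Lam's dual $k$-Schur positivity, Theorem~\ref{thm-affine-schur-positive-of-affine-Stanley}, and on our Theorem~\ref{thm-k-Kostka-number}) and into the combinatorial identification in Theorem~\ref{thm-cylindric-skew-schur}. The corollary is therefore a one-line deduction, and I would present it as such.
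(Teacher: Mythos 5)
Your argument is correct and is exactly the paper's: the corollary is deduced by combining Theorem~\ref{thm-cylindric-skew-schur} (identifying $s^{c}_{\lambda/\mu}$ with an affine Stanley symmetric polynomial $\widetilde{F}_{w}$) with Theorem~\ref{thm-S-M-affine-Ssf} (M-convexity of $\widetilde{F}_{w}$). Your added remark about restricting to finitely many variables just makes explicit a convention the paper already adopts.
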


\section{Future directions}\label{sec-future}

In this section we present some open problems and conjectures for further research. 

As shown in Section \ref{sec-affine-Stanley}, we obtain the M-convexity of affine Stanley symmetric polynomials based on the M-convexity of dual $k$-Schur polynomials. One of the key ingredients of our approach is Theorem \ref{thm-affine-schur-positive-of-affine-Stanley},
a deep result obtained by Lam \cite{LAM2008}.  
It would be interesting to find a direct proof of Theorem \ref{thm-S-M-affine-Ssf} based on the definition given by \eqref{def-affine-stanley}.  

\begin{prob}\label{prob-affine-stanley}
	Find a combinatorial proof of the M-convexity of affine Stanley symmetric polynomials.
\end{prob}

In this paper we obtain the M-convexity of cylindric skew Schur polynomials as a direct consequence of 
the M-convexity of affine Stanley symmetric polynomials. Thus, we may ask a question for cylindric skew Schur polynomials similar to Problem \ref{prob-affine-stanley}.
 
\begin{prob}
	Find a combinatorial proof of the M-convexity of cylindric skew Schur polynomials based on their tableau interpretation. 
\end{prob}

Once the M-convexity of dual $k$-Schur polynomials is established, it is natural to ask whether $k$-Schur polynomials are M-convex. 
Form  Proposition \ref{property-k-Kostka-number-1}, we know that the inverse of the matrix $\|K^{(k)}\|_{\lambda,\mu\in{{\rm Par}^{k}}}$ exists. The $k$-Schur functions, indexed by $k$-bounded partitions, are defined by inverting the unitriangular system:
\[h_{\lambda}=s_{\lambda}^{(k)}+\sum_{\mu\rhd\lambda}K^{(k)}_{\mu,\lambda}s_{\mu}^{(k)}\, \text{ for all }\, 
\lambda_{1}\leq k.\]
Here we use the definition in \cite{LM2005}.
It is known that $s_{\lambda}^{(k)}$ is always Schur positive \cite{BMPD2019}. 
For any $1\leq d\leq 25$, $1\leq k\leq 25$, and any partition $\lambda\in \mathrm{Par}^k(d)$,
we find that there always exists a dominant term in the Schur expansion of $s_{\lambda}^{(k)}$ by using SageMath \cite{SAGE}, which implies the M-convexity of $s_{\lambda}^{(k)}$ by Theorem \ref{thm-SNP-Schur-combin}. We have the following conjecture. 

\begin{conj}
	For any $\lambda\in \mathrm{Par}^k(d)$ the $k$-Schur polynomials $s_{\lambda}^{(k)}$ are M-convex.
\end{conj}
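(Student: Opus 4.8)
The plan is to reduce the M-convexity of $s_{\lambda}^{(k)}$ to Theorem~\ref{thm-SNP-Schur-combin} by controlling its expansion in the ordinary Schur basis. Write
\[
s_{\lambda}^{(k)}=\sum_{\mu}c_{\lambda\mu}\,s_{\mu},
\]
where $\mu$ runs over \emph{all} partitions of $d$, not merely the $k$-bounded ones. Two structural facts are available immediately. First, by \cite{BMPD2019} each coefficient satisfies $c_{\lambda\mu}\ge 0$. Second, a change-of-basis argument yields a dominance triangularity: the classical relation $h_{\lambda}=\sum_{\mu}K_{\mu\lambda}s_{\mu}$ and the defining relation $h_{\lambda}=\sum_{\mu}K^{(k)}_{\mu\lambda}s_{\mu}^{(k)}$ both express $h$ unitriangularly with respect to dominance (nonzero entries only for $\mu\trianglerighteq\lambda$, with diagonal entries $1$). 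Matrices of this poset-unitriangular form are closed under products and inverses, so the transition matrix $(c_{\lambda\mu})$ inherits the same shape, giving
\[
s_{\lambda}^{(k)}=s_{\lambda}+\sum_{\mu\rhd\lambda}c_{\lambda\mu}\,s_{\mu},\qquad c_{\lambda\mu}\ge 0.
\]
Hence, by Theorem~\ref{thm-SNP-Schur-combin}, M-convexity of $s_{\lambda}^{(k)}$ follows as soon as one exhibits a single partition $\nu=\nu(\lambda,k)$ with $c_{\lambda\nu}\neq 0$ and $c_{\lambda\mu}\neq 0\Rightarrow\mu\trianglelefteq\nu$; that is, the support of the Schur expansion must possess a \emph{unique} dominance-maximal element. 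This is precisely the condition verified by computer in the cited range.

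Next I would pin down $\nu$. Since each $s_{\mu}$ is itself SNP with ${\rm Newton}(s_{\mu})=\mathcal{P}_{\mu}$ and its dominance-maximal monomial content is $\mu$ itself, the target $\nu$ is equally the dominance-maximal exponent occurring in $s_{\lambda}^{(k)}$, i.e.\ $\nu=\max_{\trianglerighteq}\{\beta:[m_{\beta}]s_{\lambda}^{(k)}\neq 0\}$ whenever this maximum is unique. I would therefore work through the monomial (equivalently Schur) expansion of $s_{\lambda}^{(k)}$ using one of the combinatorial models for $k$-Schur functions --- strong and weak tableaux on $k+1$-cores, the statistics of Lapointe--Morse \cite{LM2005}, or the $k$-Pieri rule --- to write down an explicit candidate $\nu(\lambda,k)$, to certify $c_{\lambda\nu}\neq 0$ by producing one tableau of the right content, and finally to bound every remaining exponent above by $\nu$ in dominance. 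The small cases are encouraging: for $k=2$ one finds $s_{(1,1,1)}^{(2)}=s_{111}+s_{21}$, $s_{(2,1)}^{(2)}=s_{21}+s_{3}$ and $s_{(1,1,1,1)}^{(2)}=s_{22}+s_{211}+s_{1111}$, so that $\nu$ is $(2,1)$, $(3)$, $(2,2)$ respectively, suggesting a clean greedy ``row-merging'' description of $\nu$.

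The crux, and the step I expect to resist a short argument, is the uniqueness of the dominance-maximal term. A general Schur-positive symmetric function need not have one --- this is exactly why Theorem~\ref{thm-SNP-Schur-combin} has to assume it --- and since dominance is only a partial order one must rule out two incomparable maximal exponents for every $\lambda$ and every $k$. Two avenues seem most promising. The first is inductive: build $s_{\lambda}^{(k)}$ from $s_{\emptyset}^{(k)}=1$ via the $k$-Pieri rule and track how the (conjecturally unique) leading term propagates, the delicacy being that $k$-Pieri is phrased in the $k$-Schur basis whereas the leading term is a statement about the ordinary Schur basis. The second exploits the geometry behind these functions --- they represent the Schubert basis of the homology of the affine Grassmannian \cite{LAM2008} --- together with the nonnegativity of the $k$-Littlewood--Richardson coefficients, to force a single ``top'' class. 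A persistent subtlety throughout is that $\nu$ is generally \emph{not} $k$-bounded (already $s_{(2,1)}^{(2)}=s_{21}+s_{3}$ with $\nu=(3)$), so the argument cannot remain inside ${\rm Par}^{k}(d)$ and must engage the full Schur basis; this is what makes a direct combinatorial certification of the maximal term genuinely delicate.
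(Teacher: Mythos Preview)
This statement is presented in the paper as an open \emph{conjecture}, not a theorem; the paper offers no proof, only the remark that for all $k$-bounded partitions of size at most $25$ a SageMath computation found a dominant term in the Schur expansion of $s_{\lambda}^{(k)}$, after which Theorem~\ref{thm-SNP-Schur-combin} applies. Your proposal is therefore not to be compared against a paper proof --- there is none --- and should be read as a strategy toward resolving the conjecture.

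On that reading, your reduction is exactly the one the paper has in mind. The triangularity argument giving $s_{\lambda}^{(k)}=s_{\lambda}+\sum_{\mu\rhd\lambda}c_{\lambda\mu}s_{\mu}$ is correct, and combined with the Schur positivity of \cite{BMPD2019} it puts you precisely in the hypotheses of Theorem~\ref{thm-SNP-Schur-combin} \emph{except} for the existence of a unique dominance-maximal $\nu$. You identify this missing step clearly and honestly flag it as the crux. That is accurate: the triangularity you establish bounds the support from \emph{below} by $\lambda$, whereas Theorem~\ref{thm-SNP-Schur-combin} needs a bound from \emph{above}, and nothing in your outline closes that gap. Your two suggested avenues ($k$-Pieri induction, affine Schubert geometry) are reasonable leads, but neither is developed to the point of an argument, and you correctly note the complication that the putative $\nu(\lambda,k)$ is generally not $k$-bounded. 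In short, what you have written is a faithful articulation of why the conjecture is plausible and what one must prove, matching the paper's own heuristic; it is not a proof, and the paper does not claim one either.
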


We can also study the M-convexity of homogeneous polynomials by using the theory of Lorentzian polynomials, developed by Br\"and\'en and Huh \cite{BH20},
who showed that the support of any Lorentzian polynomial is ${\rm M}$-convex.
Given a polynomial $f=\sum_{\alpha\in\mathbb{N}^n}c_{\alpha}x^{\alpha}$, define its normalization by
\[N(f)=\sum_{\alpha\in\mathbb{N}^n}c_{\alpha}\frac{x_1^{\alpha_1}}{\alpha_1!}\cdots\frac{x_{n}^{\alpha_{n}}}{\alpha_{n}!}.\]
It is known that the normalization of Schur polynomials $s_{\lambda}$ for any partition $\lambda$ is a Lorentzian polynomial \cite{HMMD2022}. As a generalization of Schur polynomials, it is natural to ask whether the normalized $k$-Schur polynomials and dual $k$-Schur polynomials are Lorentzian polynomials. We propose the following conjectures.

\begin{conj}\label{conj-1-lp}
	For any $\lambda\in \mathrm{Par}^k(d)$ the polynomial $N(s_{\lambda}^{(k)})$ is a Lorentzian polynomial.
\end{conj}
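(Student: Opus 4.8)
The plan is to attack the conjecture through the Br\"and\'en--Huh characterization \cite{BH20}: a homogeneous polynomial with nonnegative coefficients is Lorentzian exactly when its support is M-convex and every iterated partial derivative of it down to degree two has a Hessian with at most one positive eigenvalue. What one should \emph{not} expect to work is to simply combine the Schur positivity of $s^{(k)}_{\lambda}$ \cite{BMPD2019} with the fact that $N(s_{\mu})$ is Lorentzian for every ordinary Schur polynomial (Huh, Matherne, M\'esz\'aros and St.~Dizier \cite{HMMD2022}): the class of Lorentzian polynomials is not closed under nonnegative linear combinations, and $N$ does not commute with multiplication, so the specific structure of the expansion $s^{(k)}_{\lambda}=\sum_{\mu}c_{\lambda\mu}s_{\mu}$ has to enter in an essential way.

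First I would dispose of the support condition. By the argument behind Theorem \ref{thm-SNP-Schur-combin} it suffices to show that the Schur expansion of $s^{(k)}_{\lambda}$ has a \emph{unique} dominance-maximal term; the computations reported above confirm this experimentally, and a conceptual proof should be reachable either from the $k$-Pieri rule or from the strong-marked-tableau model in \cite{LLMSSZ2014}. This would also yield the M-convexity of $s^{(k)}_{\lambda}$ as a by-product, but it is only the soft half of the problem.

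For the Hessian condition I see two plausible routes. The geometric one mirrors the picture behind \cite{HMMD2022}: realize $N(s^{(k)}_{\lambda})$ as a volume (degree) polynomial attached to a Schubert variety in the affine Grassmannian of $\mathrm{SL}_{k+1}$ --- concretely on a finite-dimensional Schubert variety, with the variables $x_1,\dots,x_n$ indexing a suitable family of nef divisor classes --- and then invoke the Khovanskii--Teissier / Hodge--Riemann inequalities for its mixed intersection numbers. The complementary algebraic route is to feed the $k$-rectangle factorization of Lapointe and Morse --- which expresses $s^{(k)}_{\lambda}$ as a product of the ordinary Schur functions of the $k$-rectangles contained in $\lambda$ and one $k$-Schur function $s^{(k)}_{\lambda^{\mathrm{irr}}}$ of a $k$-irreducible shape --- into the closure properties of Lorentzian polynomials, reducing to the finitely many shapes $\lambda^{\mathrm{irr}}$ and settling those by a direct analysis of their Jacobi--Trudi or raising-operator expansions. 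I expect the Hessian condition to be the genuine obstacle in either case. In the geometric route the difficulty is that affine Schubert varieties are singular, so one must pass to a Bott--Samelson resolution and control the volume polynomial under pushforward, or work with intersection cohomology and the decomposition theorem, or appeal to combinatorial Hodge theory in the spirit of Adiprasito--Huh--Katz. In the algebraic route the obstacle is again that $N$ does not commute with products, so even granting that each factor normalizes to a Lorentzian polynomial one still has to argue that the normalized product does. At present no proof of this spectral condition --- even for ordinary Schur polynomials --- is known that avoids geometry, which is why I regard it as the crux of the conjecture.
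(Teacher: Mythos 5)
There is nothing to compare your proposal against: the statement you were asked to prove is Conjecture \ref{conj-1-lp}, which the paper does not prove. The authors only report SageMath \cite{SAGE} verification for $1\le k\le 9$ and $k$-bounded partitions of size at most $9$ in $9$ variables, and they leave the general case open. Your text, read on its own terms, is also not a proof: it is a research plan that explicitly concedes the decisive step. Concretely, both halves of the Br\"and\'en--Huh criterion \cite{BH20} remain unestablished in your write-up. For the support half, you reduce to the claim that the Schur expansion of $s^{(k)}_{\lambda}$ has a unique dominance-maximal term (so that Theorem \ref{thm-SNP-Schur-combin} applies via the Schur positivity of \cite{BMPD2019}); but that claim is exactly the content of the paper's preceding conjecture on the M-convexity of $k$-Schur polynomials, which is itself only supported by computation, and you offer no argument for it beyond pointing at the $k$-Pieri rule and \cite{LLMSSZ2014}. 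For the Hessian half, you candidly state that you see no proof, and the two routes you sketch each contain an acknowledged hole: the geometric route requires realizing $N(s^{(k)}_{\lambda})$ as a volume-type polynomial on (singular) affine Schubert varieties and controlling Hodge--Riemann type inequalities there, none of which is carried out; the algebraic route via the $k$-rectangle factorization of Lapointe--Morse founders, as you yourself note, on the fact that $N$ does not intertwine with products, so Lorentzianity of the normalized factors (e.g. $N(s_R)$ from \cite{HMMD2022}) does not transfer to $N$ of the product without a new idea, and the reduction to $k$-irreducible shapes is in any case only a finite reduction for each fixed $k$.

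In short, the gap is not a single faulty step but the absence of any completed step: neither the M-convexity of the support of $s^{(k)}_{\lambda}$ nor the spectral condition on the Hessians of the iterated derivatives of $N(s^{(k)}_{\lambda})$ is proved. Your diagnosis of where the difficulty lies is sound and consistent with why the authors state this as a conjecture rather than a theorem, but as a solution to the stated problem the proposal establishes nothing beyond what the paper already records as open.
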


\begin{conj}\label{conj-2-lp}
	For any $\lambda\in \mathrm{Par}^k(d)$ the polynomial $N(\mathfrak{S}_{\lambda}^{(k)})$ is a Lorentzian polynomial.
\end{conj}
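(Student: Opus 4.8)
The plan is to verify the recursive definition of Lorentzian polynomials of Br\"and\'en and Huh \cite{BH20} directly for $N(\mathfrak{S}^{(k)}_\lambda)$, reducing the statement to a single family of inequalities among $k$-Kostka numbers. Write $d=|\lambda|$ and recall from Proposition \ref{pop-dual-schur-to-monmial} together with Proposition \ref{property-k-Kostka-number-2} that the coefficient of $x^\beta$ in $\mathfrak{S}^{(k)}_\lambda$ equals $K^{(k)}_{\lambda,p(\beta)}$, where $p(\beta)$ sorts $\beta$; in particular all coefficients are non-negative. Since normalization turns differentiation into an index shift, namely
\[
	\partial^{\gamma}\,N\!\left(\mathfrak{S}^{(k)}_\lambda\right)=N\!\left(\sum_{\beta}K^{(k)}_{\lambda,\,p(\beta+\gamma)}\,x^{\beta}\right),
\]
the recursive Lorentzian test reduces to the following: $\operatorname{supp}(\mathfrak{S}^{(k)}_\lambda)$ is M-convex, and for every $\gamma\in\mathbb{N}^n$ with $|\gamma|=d-2$ the symmetric matrix
\[
	H^{(\gamma)}=\left(K^{(k)}_{\lambda,\,p(\gamma+e_i+e_j)}\right)_{1\le i,j\le n}
\]
has at most one positive eigenvalue. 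The M-convexity of the support is exactly Theorem \ref{thm-M-SNP-dual-k-Schur}, so only the signature condition on each $H^{(\gamma)}$ is genuinely new.

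Because every entry of $H^{(\gamma)}$ is non-negative, the condition ``$H^{(\gamma)}$ has at most one positive eigenvalue'' is equivalent, by the elementary reduction used in \cite{BH20}, to the requirement that all $2\times2$ minors of $H^{(\gamma)}$ be non-positive. The principal minors give the representative inequalities
\[
	K^{(k)}_{\lambda,\,p(\gamma+2e_i)}\;K^{(k)}_{\lambda,\,p(\gamma+2e_j)}\;\le\;\left(K^{(k)}_{\lambda,\,p(\gamma+e_i+e_j)}\right)^{2},
\]
with the non-principal minors giving the analogous ``exchange'' inequalities. These are precisely the discrete log-concavity relations satisfied by the $k$-Kostka numbers along the weight lattice. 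For the ordinary Kostka numbers the corresponding inequalities are equivalent to the known fact that $N(s_\lambda)$ is Lorentzian \cite{HMMD2022}; thus the whole content of Conjecture \ref{conj-2-lp} is the $k$-analogue of Kostka log-concavity.

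To establish these inequalities I would attempt, for each triple $(\gamma,i,j)$, a weight-preserving injection from the disjoint union of the sets of $k$-SSYTs of shape $\mathfrak{c}(\lambda)$ with $k$-weights $p(\gamma+2e_i)$ and $p(\gamma+2e_j)$ into the set of ordered pairs of $k$-SSYTs of $k$-weight $p(\gamma+e_i+e_j)$. The recursive row-filling algorithm $(\lambda,\mu,k)\rightarrow T^{\{k\}}$ of Section \ref{sec-SNP-affine-schur}, which builds a $k$-SSYT one letter at a time while keeping precise track of which $k+1$-residues are occupied at each stage, is the natural source of such a map, since it supplies canonical representatives and a box-by-box comparison between tableaux of nearby $k$-weights. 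A cleaner alternative would be to transport the problem to the affine crystal structure on weak $k$-tableaux of Morse and Schilling, under which $K^{(k)}_{\lambda,\mu}$ becomes a weight multiplicity, and then deduce the inequalities from log-concavity of weight multiplicities along root strings. One may also induct on $k$, using that $\mathfrak{S}^{(k)}_\lambda\to s_\lambda$ as $k\to\infty$ furnishes the already-known Lorentzian base case.

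The main obstacle is precisely this last step: proving log-concavity of the $k$-Kostka numbers. Unlike the classical setting, the $k$-weight records the number of \emph{distinct} $k+1$-residues occupied by each letter rather than plain letter multiplicities, so neither the Bender--Knuth involutions nor the usual $\mathfrak{sl}_2$-crystal operators act in a way that manifestly preserves the relevant statistic, and there is no polytope analogous to the Gelfand--Tsetlin polytope realizing $\mathfrak{S}^{(k)}_\lambda$ as a volume polynomial. A secondary subtlety is the passage from the pairwise minor inequalities to the full signature of $H^{(\gamma)}$: one must confirm that the M-convexity of $\operatorname{supp}(\mathfrak{S}^{(k)}_\lambda)$ from Theorem \ref{thm-M-SNP-dual-k-Schur} genuinely controls the supports of the shifted quadratic slices, so that the Perron--Frobenius reduction of \cite{BH20} applies to these specific non-negative matrices; this should go through but deserves careful verification.
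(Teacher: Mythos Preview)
The statement you are addressing is explicitly a \emph{conjecture} in the paper; the authors offer no proof and report only SageMath verification for $1\le k\le 9$ and $k+1$-cores of size at most $9$. Your proposal is likewise not a proof, and you say so yourself: after reducing Conjecture \ref{conj-2-lp} to a family of log-concavity inequalities among $k$-Kostka numbers, you write that ``the main obstacle is precisely this last step.'' What you have produced is therefore a reformulation of the conjecture together with some speculative lines of attack (injections built from the row-filling algorithm, affine crystal operators, ``induction on $k$''), none of which is carried out.

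Two of the steps you do claim are also problematic. First, the asserted equivalence between ``$H^{(\gamma)}$ has at most one positive eigenvalue'' and ``all $2\times 2$ minors of $H^{(\gamma)}$ are non-positive'' is false for non-negative symmetric matrices in general: the block-diagonal matrix with blocks $\left(\begin{smallmatrix}0&1\\1&0\end{smallmatrix}\right)$ and $(1)$ has every $2\times 2$ principal minor $\le 0$ yet two positive eigenvalues. You later downgrade this to a ``secondary subtlety,'' but it is a genuine gap in the reduction, not a detail, and M-convexity of the global support does not automatically repair it for each quadratic slice. Second, the proposed ``induction on $k$'' with the limit $k\to\infty$ as base case is not an induction at all: there is no mechanism passing from $\mathfrak{S}^{(k+1)}_\lambda$ to $\mathfrak{S}^{(k)}_\lambda$ (or conversely) of the required shape. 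Finally, even in the classical case the Lorentzian property of $N(s_\lambda)$ in \cite{HMMD2022} is \emph{not} obtained by proving Kostka log-concavity via explicit injections; it comes from realizing $N(s_\lambda)$ as a volume polynomial, with log-concavity as a corollary. No analogous geometric realization is known for $\mathfrak{S}^{(k)}_\lambda$, which is exactly why the paper leaves this as a conjecture.
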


By using SageMath \cite{SAGE}, we verify Conjecture \ref{conj-1-lp} for $1\leq k\leq 9$ and all $k$-bounded partitions of size less than or equal to 9, and we verify Conjecture \ref{conj-2-lp} for $1\le k \le 9$ and all $k+1$-cores of size less than or equal to 9. All functions are restricted to 9 variables.

	\vskip 0.5cm
	\noindent \textbf{Acknowledgments.} This work is supported by the Fundamental Research Funds for the Central Universities and the National Science Foundation of China (11971249). We would also like to thank Xin-Bei Liu for the helpful discussions.

\end{document}